\documentclass[a4paper,10pt, oneside]{article}
\usepackage{amsmath,amssymb,amsthm,mathrsfs,graphicx}
\usepackage{authblk}
\usepackage[font=small,labelfont=md,textfont=it]{caption}
\usepackage{floatrow,float}
\usepackage[titletoc, title]{appendix}
\usepackage[colorlinks,linkcolor=blue,citecolor=blue]{hyperref}
\usepackage{etoolbox}
\usepackage{longtable}
\usepackage{diagbox}
\usepackage{booktabs,makecell,multirow}
\usepackage[capitalise,nosort]{cleveref}
\usepackage{cases,color}
\crefname{equation}{}{}
\crefname{lem}{Lemma}{Lemmas}
\crefname{thm}{Theorem}{Theorems}
\crefname{discr}{Discretization}{Discretizations}

\DeclareMathOperator{\D}{D}

\apptocmd{\sloppy}{\hbadness 10000\relax}{}{}

\newcommand{\dual}[1]{\langle {#1} \rangle}
\newcommand{\Dual}[1]{\left\langle {#1} \right\rangle}

\newcommand{\nm}[1]{\lVert {#1} \rVert}
\newcommand{\Nm}[1]{\left\lVert {#1} \right\rVert}
\newcommand{\snm}[1]{\lvert {#1} \rvert}

\newcommand{\Snm}[1]{\left\lvert {#1} \right\rvert}
\newcommand{\ssnm}[1]
{
  \left\vert\kern-0.25ex
  \left\vert\kern-0.25ex
  \left\vert
  {#1}
  \right\vert\kern-0.25ex
  \right\vert\kern-0.25ex
  \right\vert
}

\makeatletter
\def\spher@harm#1{%
  \vbox{\hbox{%
    \offinterlineskip
    \valign{&\hb@xt@2\p@{\hss$##$\hss}\vskip.2ex\cr#1\crcr}%
  }\vskip-.36ex}%
}
\def\gshone{\spher@harm{.}}
\def\gshtwo{\spher@harm{.&.}}
\def\gshthree{\spher@harm{.&.&.}}
\let\gsh\spher@harm
\makeatother

\newtheorem{lem}{Lemma}[section]
\newtheorem{rem}{Remark}[section]
\newtheorem{thm}{Theorem}[section]

\makeatletter\def\@captype{table}\makeatother

\begin{document}

\title{
  \Large \bf Numerical analysis of a semilinear fractional diffusion equation
  \thanks
  {
    This work was supported in part by the National Natural Science Foundation of
    China (NSFC) Grants Nos.~11901410 and 11771312.
  }
}
\author[1]{Binjie Li\thanks{libinjie@scu.edu.cn, libinjie@aliyun.com}}
\author[2]{Tao Wang\thanks{Corresponding author: wangtao5233@hotmail.com}}
\author[1]{Xiaoping Xie\thanks{xpxie@scu.edu.cn}}
\affil[1]{School of Mathematics, Sichuan University, Chengdu 610064, China}
\affil[2]{South China Research Center for Applied Mathematics and Interdisciplinary Studies, South China Normal University, Guangzhou 510631, China}

\renewcommand\Authands{ and }

\date{}
\maketitle

\begin{abstract}

  This paper considers the numerical analysis of a semilinear fractional diffusion
  equation with nonsmooth initial data. A new Gr\"onwall's inequality and its
  discrete version are proposed. By the two inequalities,   error estimates in
  three Sobolev norms are derived for a spatial semi-discretization and a full
  discretization, which are optimal with respect to the
  regularity of the solution. A sharp temporal error estimate on graded temporal
  grids is also rigorously established. In addition, the spatial accuracy $
  \scriptstyle O(h^2(t^{-\alpha} + \ln(1/h)\!)\!) $ in the pointwise $ \scriptstyle
  L^2(\Omega) $-norm is obtained for a spatial semi-discretization. Finally, several
  numerical results are provided to verify the theoretical results.

\end{abstract}
\medskip\noindent{\bf Keywords:} semilinear fractional diffusion equation, nonsmooth
initial data, regularity, convergence, graded temporal grid

\medskip\noindent{\bf MSC(2010): 65M60, 35B65}


\section{Introduction}
Let $ 0 < T < \infty $ and $ \Omega \subset \mathbb R^d $ ($d=1,2,3$) be a convex $ d
$-polytope. We consider the  following semilinear fractional
diffusion equation:
\begin{equation}
  \label{eq:model}
  \D_{0+}^\alpha(u-u_0)(t) - \Delta u(t) = f(u(t)),
  \quad 0 < t \leqslant T,
\end{equation}
where $ 0 < \alpha < 1 $, $ u_0 \in L^2(\Omega) $, $ u(t) \in H_0^1(\Omega) $ for
a.e.~$ 0 < t \leqslant T $,  $ f: \mathbb R \to \mathbb R $ is Lipschitz
continuous, and $ \D_{0+}^\alpha $ is a Riemann-Liouville fractional
differential operator of order $ \alpha $.



By now there is an extensive literature on the numerical treatments of linear
fractional diffusion-wave equations. Roughly speaking, the  algorithms in the literature can be divided into   four types.
 The first type of algorithm   uses the convolution quadrature rules proposed in
\cite{Lubich1986,Lubich1988} to approximate the time fractional calculus operators; see, e.g.  \cite{Lubich1996,Jin2016}.
The second type adopts the L1 scheme to approximate the time fractional
derivatives; see \cite{sun2006fully,Lin2007,Jin2015IMA,Li-Wang-Xie2019wave,Liao2019} and the
references therein.  The third type   employs the spectral method to approximate the
time fractional calculus operators; see
 \cite{Li2009,Zheng2015,Li2018-wave,Zayernouri2014Fractional,Zayernouri2012Karniadakis,Zayernouri2014Exponentially} 
 and the references therein.  The fourth type of algorithm utilizes the discontinuous Galerkin or
Petrov-Galerkin method to approximate the time fractional calculus operators; see
\cite{Mustapha2009Discontinuous,Mclean2009Convergence,Mustapha2009Discontinuous,Mustapha2012Uniform,Mclean2015Time,Li2019SIAM,Luo2019}
and the references therein.


It is well known that solutions of time fractional diffusion-wave equations generally
have singularity in time, despite how smooth the initial data is.  
This means that rigorous numerical analyses for the time fractional diffusion-wave
equations are necessary. In \cite{sun2006fully} Sun and Wu proposed the well-known L1
scheme and derived temporal accuracies $ O(\tau^{2-\alpha}) $ and $ 
O(\tau^{3-\alpha}) $ for the fractional diffusion equations and the fractional wave
equations. However, Jin et al.~\cite{Jin2015IMA} proved that the L1 scheme is of only
temporal accuracy $ O(\tau) $ for the fractional diffusion equations with
nonvanishing initial data, and Li et al.~\cite{Li-Wang-Xie2019wave} proved that the
factor $ \tau^\alpha/h^2 $ will significantly worsen the temporal accuracy of the L1
scheme for the fractional wave equations with nonvanishing initial data.


So far, the numerical analysis for time fractional diffusion-wave equations mainly
focuses on the linear problems, and is very rare for semilinear fractional diffusion
equations. For problem \cref{eq:model} with $ u_0 \in \dot H^2(\Omega) $, Jin et
al.~\cite{Jin2018} analyzed the regularity of the solution and proposed an elegant
numerical analysis framework for deriving the following pointwise $ L^2(\Omega)
$-norm error estimates:
\begin{equation}
  \label{eq:Jin-space}
  \max_{0 \leqslant t \leqslant T}
  \nm{u(t) - u_h(t)}_{L^2(\Omega)} \leqslant
  C (\ln(1/h))^2 h^2 \nm{u_0}_{\dot H^2(\Omega)}
\end{equation}
for a spatial
semi-discretization,
and
\[
  \max_{1 \leqslant n \leqslant N}
  \nm{u_h(t_n) - u_h^n}_{L^2(\Omega)} \leqslant C \tau^\alpha
  \nm{u_0}_{\dot H^2(\Omega)}
\]
 for a full discretization using the L1 scheme or the backward Euler CQ scheme in the
temporal discretization with   uniform temporal grids.
Applying the Newton linearized method to the full discretization with the L1 scheme
in \cite{Jin2018}, Li et al.~\cite{Li-Wu-Zhang2019} proposed a linearized Galerkin
finite element method. Under the condition that
\[
  \nm{u''(t)}_{L^\infty(\Omega)} \leqslant C(1+t^{\alpha-2})
\]
and $ u $ is sufficiently regular in spatial directions, they obtained the temporal
accuracy $ O(J^{-\alpha\sigma}) $ on graded temporal grids with $ 1 \leqslant \sigma
\leqslant \frac{2-\alpha}\alpha $ (cf.~\cref{sec:discretization} for the definitions
of $ J $ and $ \sigma $). We note that if $ f $ is Lipschitz continuous, then the
above regularity conditions on $ u $ do not hold necessarily. It should be mentioned
that there are two works on the numerical analysis for semilinear
integro-differential equations with weakly singular kernels. Cuesta et
al.~\cite{Cuesta2006} proposed a convolution quadrature time discretization with a
rigorous convergence analysis under weak assumptions on the data. This discretization
is of second order but requires the grids to be uniform. Mustapha and Mustapha
\cite{Mustapha2010IMA} developed a second-order-accurate time discretization on
graded grids with a convergence analysis under some regularity assumptions on the
solution.


In this paper, we consider the numerical analysis of problem \cref{eq:model} with $
u_0 \in L^2(\Omega) $. Our main contributions  are as follows.
\begin{itemize}
  \item We establish the regularity of problem \cref{eq:model} with $ u_0 \in \dot
    H^{2\delta}(\Omega) $ for $ 0 \leqslant \delta \leqslant 1 $.
    \item  A new  energy type Gr\"onwall's inequality and its
    discrete version are proposed.
  \item For a spatial semi-discretization,
 we obtain the error estimate
    \begin{equation*}
      \nm{(u-u_h)(t)}_{L^2(\Omega)} \leqslant C
      h^2(t^{-\alpha} + \ln(1/h)) \nm{u_0}_{L^2(\Omega)}.
    \end{equation*}
    \item  For a full discretization using a discontinuous Galerkin method in the
    temporal discretization, optimal error estimates with respect to the regularity of the solution are derived in the
    norms
    \[
      \nm{\cdot}_{{}_0H^{\alpha/2}(0,T;L^2(\Omega))}, \,
      \nm{\cdot}_{L^2(0,T;\dot H^1(\Omega))} \text{ and }
      \nm{\cdot}_{L^2(0,T;L^2(\Omega))}.
    \]
    \item    
    A sharp temporal error
    estimate for the full discretization is established   on graded temporal grids in the case $ u_0 \in \dot H^1(\Omega) $.
\end{itemize}
The energy type Gr\"onwall's inequality and its
discrete version are crucial to the error estimate in energy norms, and  it may be useful for the numerical analysis of corresponding optimal control problems. To our best knowledge, this
paper provides the first numerical analysis of problem \cref{eq:model} with nonsmooth
data.



The rest of this paper is organized as follows. \cref{sec:pre} introduces some
function spaces and the Riemann-Liouville fractional calculus operators.
\cref{sec:regu} investigates the regularity of problem \cref{eq:model} with nonsmooth
and smooth initial data. \cref{sec:semi} establishes the convergence of the spatial
semi-discretization with nonsmooth initial data. \cref{sec:discretization} derives
temporal error estimates for the full discretization. Finally, \cref{sec:numer}
provides several numerical experiments to verify the theoretical results.

\section{Preliminaries}
\label{sec:pre}
Assume that $ -\infty < a < b < \infty $ and $ X $ is a separable Hilbert space. For
each $ m \in \mathbb N $, define
\begin{align*}
  {}_0H^m(a,b;X) & := \{v\in H^m(a,b;X): v^{(k)}(a)=0,\,\, 0\leqslant k<m\}, \\
  {}^0H^m(a,b;X) & := \{v\in H^m(a,b;X): v^{(k)}(b)=0,\,\, 0\leqslant k<m\},
\end{align*}
where $ H^m(a,b;X) $ is a standard vector valued Sobolev space and $ v^{(k)} $ is the
$ k $-th weak derivative of $ v $. We equip the above two spaces with the norms
\begin{align*}
  \nm{v}_{{}_0H^m(a,b;X)} &:= \nm{v^{(m)}}_{L^2(a,b;X)}
  \quad \forall v \in {}_0H^m(a,b;X), \\
  \nm{v}_{{}^0H^m(a,b;X)} &:= \nm{v^{(m)}}_{L^2(a,b;X)}
  \quad \forall v \in {}^0H^m(a,b;X),
\end{align*}
respectively. For any $ m \in \mathbb N_{>0} $ and $ 0 < \theta < 1 $, define
\begin{align*}
  {}_0H^{m-1+\theta}(a,b;X) &:= [{}_0H^{m-1}(a,b;X), \ {}_0H^m(a,b;X)]_{\theta,2}, \\
  {}^0H^{m-1+\theta}(a,b;X) &:= [{}^0H^{m-1}(a,b;X), \ {}^0H^m(a,b;X)]_{\theta,2},
\end{align*}
where $ [\cdot, \cdot]_{\theta,2} $ means the famous $ K $-method \cite[Chapter
22]{Tartar2007}. For each $ 0 < \gamma < \infty $, we use $ {}^0H^{-\gamma}(a,b;X) $
to denote the dual space of $ {}_0H^\gamma(a,b;X) $ and use the notation $
\dual{\cdot,\cdot}_{{}_0H^\gamma(a,b;X)} $ to denote the duality paring between $
{}^0H^{-\gamma}(a,b;X) $ and $ {}_0H^\gamma(a,b;X) $. The space $
{}_0H^{-\gamma}(a,b;X) $ and the notation $ \dual{\cdot,\cdot}_{{}^0H^\gamma(a,b;X)}
$ are defined analogously.

For $ -\infty < \gamma < 0 $, define
\begin{align*}
  \left(\D_{a+}^\gamma v\right)(t) &:=
  \frac1{ \Gamma(-\gamma) }
  \int_a^t (t-s)^{-\gamma-1} v(s) \, \mathrm{d}s, \quad a < t < b, \\
  \left(\D_{b-}^\gamma v\right)(t) &:=
  \frac1{ \Gamma(-\gamma) }
  \int_t^b (s-t)^{-\gamma-1} v(s) \, \mathrm{d}s, \quad a < t < b,
\end{align*}
for all $ v \in L^1(a,b;X) $, where $ \Gamma(\cdot) $ is the gamma function. In
addition, let $ \D_{a+}^0 $ and $ \D_{b-}^0 $ be the identity operator on $
L^1(a,b;X) $. For $ j - 1 < \gamma \leqslant j $ with $ j \in \mathbb N_{>0} $,
define
\begin{align*}
  \D_{a+}^\gamma v & := \D^j \D_{a+}^{\gamma-j}v, \\
  \D_{b-}^\gamma v & := (-\D)^j \D_{b-}^{\gamma-j}v,
\end{align*}
for all $ v \in L^1(a,b;X) $, where $ \D $ is the first-order differential operator
in the distribution sense.

\begin{lem}
	\label{lem:regu-basic}
  Assume that $ 0 < \gamma < \infty $. If $ v \in {}_0H^\gamma(a,b;X) $, then
  \begin{small}
  \begin{align*}
    C_1 \nm{v}_{{}_0H^\gamma(a,b;X)} \leqslant
    \nm{\D_{a+}^\gamma v}_{L^2(a,b;X )} \leqslant
    C_2 \nm{v}_{{}_0H^\gamma(a,b;X)},
  \end{align*}
  \end{small}
  where $ C_1 $ and $ C_2 $ are two positive constants depending only on $ \gamma $.
  If $ v \in L^2(a,b;X) $, then
  \[
    \nm{\D_{a+}^{-\gamma}v}_{{}_0H^\gamma(a,b;X)}
    \leqslant C \nm{v}_{L^2(a,b;X)},
  \]
  where $ C $ is a positive constant depending only on $ \gamma $. In particular,
  \[
    \nm{\D_{a+}^{-\gamma} v}_{{}_0H^\gamma(a,b;X)}
    \leqslant \frac{C}{\sqrt\gamma} \nm{v}_{L^2(a,b;X)}
  \]
  for all $ v \in L^2(a,b;X) $ and $ 0 < \gamma \leqslant \beta < 1 $, where $ C $ is
  a positive constant depending only on $ \beta $.
\end{lem}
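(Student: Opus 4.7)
The plan is to establish the three claims by combining direct computation at integer orders with real interpolation and the semigroup identity $\D_{a+}^{-\gamma_1}\D_{a+}^{-\gamma_2} = \D_{a+}^{-(\gamma_1+\gamma_2)}$ of the Riemann--Liouville operators. The three observations that drive the proof are: (i) $\D_{a+}^{-m}$ is the $m$-fold iterated integral from $a$, hence an isometric isomorphism $L^2(a,b;X)\to{}_0H^m(a,b;X)$ for every $m\in\mathbb N_{>0}$; (ii) the fractional integral $\D_{a+}^{-\gamma}\colon L^2(a,b;X)\to L^2(a,b;X)$ is bounded for every $\gamma>0$, since its kernel $(t-s)^{\gamma-1}/\Gamma(\gamma)$ belongs to $L^1(0,b-a)$; and (iii) for $v\in{}_0H^m(a,b;X)$ and $0<\gamma\leqslant m$, the vanishing of $v,v',\ldots,v^{(m-1)}$ at $a$ combined with the semigroup property yields the identity $\D_{a+}^\gamma v=\D_{a+}^{-(m-\gamma)}v^{(m)}$, which reduces a fractional derivative on ${}_0H^m$ to a fractional integral on $L^2$.

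For the two-sided estimate in the first assertion, the integer case $\gamma=m$ is immediate: $\D_{a+}^m v=v^{(m)}$ and $\nm{v}_{{}_0H^m(a,b;X)}=\nm{v^{(m)}}_{L^2(a,b;X)}$, so $C_1=C_2=1$. For a non-integer $\gamma=m-1+\theta$ with $\theta\in(0,1)$, the upper bound $\nm{\D_{a+}^\gamma v}_{L^2(a,b;X)}\leqslant C_2\nm{v}_{{}_0H^\gamma(a,b;X)}$ follows by real $K$-interpolation between the integer endpoints, applied to the identity in (iii); the lower bound is then obtained by applying the second assertion (proved next) to $w=\D_{a+}^\gamma v$, which yields $v=\D_{a+}^{-\gamma}w\in{}_0H^\gamma(a,b;X)$ with $\nm{v}_{{}_0H^\gamma(a,b;X)}\leqslant C\nm{\D_{a+}^\gamma v}_{L^2(a,b;X)}$. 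The second assertion itself is deduced by interpolating the integer-order identity $\nm{\D_{a+}^{-m}v}_{{}_0H^m(a,b;X)}=\nm{v}_{L^2(a,b;X)}$ against the trivial endpoint $\nm{\D_{a+}^0 v}_{L^2(a,b;X)}=\nm{v}_{L^2(a,b;X)}$, using the factorisation $\D_{a+}^{-\gamma}=\D_{a+}^{-\lceil\gamma\rceil}\D_{a+}^{\lceil\gamma\rceil-\gamma}$ to identify the interpolated operator with $\D_{a+}^{-\gamma}$.

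The sharp refinement in the final assertion is the subtle point, since a blanket interpolation argument yields a constant blowing up much faster than $1/\sqrt\gamma$ as $\gamma\to 0^+$. I would obtain the stated rate by working directly with the $K$-functional definition of $[L^2(a,b;X),{}_0H^1(a,b;X)]_{\gamma,2}$: for each scale $t>0$, decompose $\D_{a+}^{-\gamma}v$ through a truncation of $v$ at a correlated time scale, estimate the two pieces in $L^2$ and in ${}_0H^1$ by exploiting the explicit $L^1$-bound on the kernel $(t-s)^{\gamma-1}/\Gamma(\gamma)$, and then compute the weighted integral $\int_0^\infty t^{-2\gamma}K(t,\D_{a+}^{-\gamma}v)^2\,\mathrm dt/t$ explicitly; the $1/\sqrt\gamma$ factor emerges from the $(2\gamma)^{-1}$ produced by integrating $t^{-2\gamma-1}$ near $t=0$. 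The main obstacle of the lemma is precisely this constant tracking, because $\D_{a+}^{-\gamma}$ is not the canonical interpolation of the identity and $\D_{a+}^{-1}$, so one cannot invoke an abstract interpolation theorem as a black box and must use the explicit convolution structure of the fractional integral.
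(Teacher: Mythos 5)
The paper does not actually prove this lemma; it refers to \cite{Ervin2006} and \cite[Section 3]{Luo2019}, where the arguments run through Plancherel's theorem applied to the zero extension of $v$ and the identification of $ {}_0H^\gamma(a,b;X) $ with (restrictions of) functions in $ H^\gamma(\mathbb R;X) $ supported in $ [a,\infty) $. Your plan avoids that route, but the step you lean on instead does not exist: real interpolation of operators requires a \emph{single} operator that is bounded between both endpoint couples, whereas you propose to ``interpolate'' between the two \emph{different} operators $ \D_{a+}^0 = I:L^2\to L^2 $ and $ \D_{a+}^{-m}:L^2\to{}_0H^m $. The factorisation $ \D_{a+}^{-\gamma}=\D_{a+}^{-\lceil\gamma\rceil}\D_{a+}^{\lceil\gamma\rceil-\gamma} $ does not repair this, because $ \D_{a+}^{\lceil\gamma\rceil-\gamma} $ is a fractional \emph{derivative} of positive order and is unbounded on $ L^2(a,b;X) $, so neither factor realises the needed endpoint mapping properties. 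Since you then derive the lower bound of the first assertion from the second assertion, the gap propagates there as well. The genuinely hard content of the lemma is precisely the non-integer case $ \D_{a+}^{-\gamma}:L^2\to[L^2,{}_0H^1]_{\gamma,2} $, and it needs either the Fourier-multiplier identity $ \widehat{\D_{a+}^{-\gamma}v}(\xi)=(i\xi)^{-\gamma}\widehat v(\xi) $ together with the Lions--Magenes identification of the interpolation space (with care at $ \gamma=1/2 $), or Kato's theory of fractional powers of the m-accretive operator $ \mathrm d/\mathrm dt $ with domain $ {}_0H^1 $.

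The $K$-functional computation you sketch for the last assertion also fails quantitatively. Splitting the kernel $ k_\gamma(r)=r^{\gamma-1}/\Gamma(\gamma) $ at a scale $ \epsilon $ and using only $ L^1 $-bounds gives, after optimising $ \epsilon\sim t $, the estimate $ K\big(t,\D_{a+}^{-\gamma}v\big)\leqslant C\,t^\gamma\nm{v}_{L^2(a,b;X)} $ for small $ t $; but then the Gagliardo integral $ \int_0^{1}\big(t^{-\gamma}K(t,\cdot)\big)^2\,\mathrm dt/t $ behaves like $ \int_0^1 t^{-1}\,\mathrm dt $ and diverges. In other words, crude $ L^1 $-convolution bounds only place $ \D_{a+}^{-\gamma}v $ in the weak space $ (L^2,{}_0H^1)_{\gamma,\infty} $, which is strictly larger than $ {}_0H^\gamma=(L^2,{}_0H^1)_{\gamma,2} $; summability of the scale-by-scale contributions requires the almost-orthogonality that Plancherel provides and that the $ L^1 $-bound throws away. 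Consequently the claimed origin of the $ 1/\sqrt\gamma $ factor (``the $ (2\gamma)^{-1} $ produced by integrating $ t^{-2\gamma-1} $ near $ t=0 $'') rests on an integral that is actually infinite. The clean way to get both the membership and the $ C/\sqrt\gamma $ constant is to track the constants in the Fourier-side computation: one bounds $ \nm{|\xi|^\gamma(i\xi)^{-\gamma}\widehat v}_{L^2} $ together with $ \nm{\D_{a+}^{-\gamma}v}_{L^2}\leqslant\frac{(b-a)^\gamma}{\Gamma(1+\gamma)}\nm{v}_{L^2} $, and the $ \gamma^{-1/2} $ emerges from the equivalence constant between the interpolation norm and the intrinsic $ H^\gamma $ seminorm as $ \gamma\to0^+ $.
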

\begin{lem}
  \label{lem:coer}
  If $ 0 < \gamma < 1/2 $, then
  \begin{align*}
    \cos(\gamma\pi) \nm{\D_{a+}^\gamma v}_{L^2(a,b;X)}^2 \leqslant
    (\D_{a+}^\gamma v, \D_{b-}^\gamma v)_{L^2(a,b;X)} \leqslant
    \sec(\gamma\pi) \nm{\D_{a+}^\gamma v}_{L^2(a,b;X)}^2, \\
    \cos(\gamma\pi) \nm{\D_{b-}^\gamma v}_{L^2(a,b;X)}^2 \leqslant
    (\D_{a+}^\gamma v, \D_{b-}^\gamma v)_{L^2(a,b;X)} \leqslant
    \sec(\gamma\pi) \nm{\D_{b-}^\gamma v}_{L^2(a,b;X)}^2,
  \end{align*}
  for all $ v \in {}_0H^\gamma(a,b;X) $ (equivalent to $ {}^0H^\gamma(a,b;X) $),
  where $ (\cdot,\cdot)_{L^2(a,b;X)} $ is the usual inner product in $ L^2(a,b;X) $.
  Moreover,
  \[
    \dual{\D_{a+}^{2\gamma} v, w}_{{}^0H^\gamma(a,b;X)} =
    (\D_{a+}^\gamma v, \D_{b-}^\gamma w)_{L^2(a,b;X)} =
    \dual{\D_{b-}^{2\gamma} w, v}_{{}_0H^\gamma(a,b;X)}
  \]
  for all $ v \in {}_0H^\gamma(a,b;X) $ and $ w \in {}^0H^\gamma(a,b;X) $.
\end{lem}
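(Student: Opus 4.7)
The strategy is to reduce \cref{lem:coer} to a Fourier-analytic identity on the whole real line. Since every quantity in the statement is bilinear in $v$ (or in $(v,w)$), expanding in an orthonormal basis of $X$ reduces the problem to the case $X = \mathbb R$, so I would assume $X = \mathbb R$ from the outset. Because $0 < \gamma < 1/2$, the spaces ${}_0H^\gamma(a,b)$ and ${}^0H^\gamma(a,b)$ coincide with equivalent norms (this is precisely the parenthetical assertion in the statement), and the zero extension $\widetilde v$ of any element of either space lies in $H^\gamma(\mathbb R)$; under this extension $\D_{a+}^\gamma v$ and $\D_{b-}^\gamma v$ agree on $(a,b)$ with the left and right Liouville fractional derivatives of $\widetilde v$ on $\mathbb R$.

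Next, on the Fourier side these two operators correspond to multiplication by $(i\xi)^\gamma$ and $(-i\xi)^\gamma$ respectively (principal branches). A direct Plancherel computation, in which the evenness of $|\widehat{\widetilde v}(\xi)|^2$ (a consequence of $v$ being real-valued) kills the imaginary part of the resulting integrand, then yields
\[
  (\D_{a+}^\gamma v, \D_{b-}^\gamma v)_{L^2(a,b)}
  = \cos(\gamma\pi) \int_{\mathbb R} |\xi|^{2\gamma} |\widehat{\widetilde v}(\xi)|^2 \, d\xi
  = \cos(\gamma\pi) \nm{\D_{a+}^\gamma v}_{L^2(a,b)}^2,
\]
together with the symmetry $\nm{\D_{a+}^\gamma v}_{L^2(a,b)} = \nm{\D_{b-}^\gamma v}_{L^2(a,b)}$. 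Both pairs of two-sided inequalities in the lemma then follow at once, since $\cos(\gamma\pi) \leqslant 1 \leqslant \sec(\gamma\pi)$ for $\gamma$ in the admissible range.

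For the duality identity, I would first establish the fractional integration-by-parts formula $(\D_{a+}^{-\gamma} f, g)_{L^2(a,b;X)} = (f, \D_{b-}^{-\gamma} g)_{L^2(a,b;X)}$ for $f, g \in L^2(a,b;X)$ by a direct Fubini argument on the convolution kernels $(t-s)^{\gamma-1}/\Gamma(\gamma)$. Invoking \cref{lem:regu-basic} to realize $\D_{a+}^{-\gamma}$ and $\D_{b-}^{-\gamma}$ as isomorphisms onto ${}_0H^\gamma(a,b;X)$ and ${}^0H^\gamma(a,b;X)$ respectively, writing $w = \D_{b-}^{-\gamma}(\D_{b-}^\gamma w)$, and substituting into the $L^2$-pairing extends the formula by density to the duality pairing, yielding $\dual{\D_{a+}^{2\gamma} v, w}_{{}^0H^\gamma(a,b;X)} = (\D_{a+}^\gamma v, \D_{b-}^\gamma w)_{L^2(a,b;X)}$; the symmetric argument produces the remaining equality. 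The main technical obstacle is the first identification step: carefully verifying the equivalence ${}_0H^\gamma(a,b;X) = {}^0H^\gamma(a,b;X)$ and the compatibility of the Liouville operators with the zero extension across this scale of interpolation-defined spaces. Once those structural facts are in hand, the Plancherel computation and the Fubini-based integration by parts are routine.
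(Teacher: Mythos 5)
The paper does not prove this lemma itself (it cites Ervin--Roop and Luo et al.), so your Fourier route is a reasonable one to attempt, but your central computation contains a genuine error. Write $\widetilde v$ for the zero extension and $D_\pm^\gamma$ for the Liouville derivatives on $\mathbb R$. Plancherel does give
\[
  (\D_{a+}^\gamma v, \D_{b-}^\gamma v)_{L^2(a,b)}
  = \cos(\gamma\pi)\int_{\mathbb R}|\xi|^{2\gamma}\bsnm{\widehat{\widetilde v}(\xi)}^2\,\mathrm{d}\xi,
\]
because the product $D_+^\gamma\widetilde v\cdot D_-^\gamma\widetilde v$ is supported in $[a,b]$. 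But the right-hand side equals $\cos(\gamma\pi)\nm{D_+^\gamma\widetilde v}_{L^2(\mathbb R)}^2$, \emph{not} $\cos(\gamma\pi)\nm{\D_{a+}^\gamma v}_{L^2(a,b)}^2$: although $D_+^\gamma\widetilde v$ agrees with $\D_{a+}^\gamma v$ on $(a,b)$, it does not vanish on $(b,\infty)$ (for $t>b$ it is a nonzero multiple of $\int_a^b(t-s)^{-\gamma-1}v(s)\,\mathrm{d}s$), so the whole-line norm is strictly larger in general. For the same reason the claimed symmetry $\nm{\D_{a+}^\gamma v}_{L^2(a,b)}=\nm{\D_{b-}^\gamma v}_{L^2(a,b)}$ is false; only the whole-line norms of $D_+^\gamma\widetilde v$ and $D_-^\gamma\widetilde v$ coincide. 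Had your exact identity been true, the lemma's left-hand inequalities would be equalities, which is not what is asserted.

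What Plancherel correctly yields is only the pair of \emph{lower} bounds, $(\D_{a+}^\gamma v,\D_{b-}^\gamma v)\geqslant\cos(\gamma\pi)\nm{\D_{a+}^\gamma v}_{L^2(a,b)}^2$ and its mirror image. The upper bounds do not ``follow at once''; you need an additional step combining one lower bound with Cauchy--Schwarz: from $\cos(\gamma\pi)\nm{\D_{b-}^\gamma v}_{L^2(a,b)}^2\leqslant(\D_{a+}^\gamma v,\D_{b-}^\gamma v)\leqslant\nm{\D_{a+}^\gamma v}_{L^2(a,b)}\nm{\D_{b-}^\gamma v}_{L^2(a,b)}$ one gets $\nm{\D_{b-}^\gamma v}_{L^2(a,b)}\leqslant\sec(\gamma\pi)\nm{\D_{a+}^\gamma v}_{L^2(a,b)}$ and hence $(\D_{a+}^\gamma v,\D_{b-}^\gamma v)\leqslant\sec(\gamma\pi)\nm{\D_{a+}^\gamma v}_{L^2(a,b)}^2$. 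With that repair, and with the identification of ${}_0H^\gamma(a,b;X)$ with the zero-extension subspace of $H^\gamma(\mathbb R;X)$ that you already flag as the technical step, the argument goes through; your treatment of the duality identity via $(\D_{a+}^{-\gamma}f,g)=(f,\D_{b-}^{-\gamma}g)$, the isomorphism properties from \cref{lem:regu-basic}, and density is fine.
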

\begin{rem}
  For the proofs of the above two lemmas, we refer the reader to \cite{Ervin2006} and
  \cite[Section 3]{Luo2019}.
\end{rem}

It is well known that there exists an orthonormal basis $\{\phi_n: n \in \mathbb N \}
\subset H_0^1(\Omega) \cap H^2(\Omega) $ of $ L^2(\Omega) $ such that
\[
	-\Delta \phi_n =\lambda_n \phi_n,
\]
where $ \{ \lambda_n: n \in \mathbb N \} $ is a positive non-decreasing sequence and
$\lambda_n\to\infty$ as $n\to\infty$.
For any $ -\infty< \beta < \infty $, define
\begin{center}
  $
  \dot H^\beta(\Omega) := \Big\{
    \sum_{n=0}^\infty v_n \phi_n:\
    \sum_{n=0}^\infty \lambda_n^\beta v_n^2 < \infty
  \Big\}
  $,
\end{center}
and endow this space with the norm
\[
  \Big\|\sum_{n=0}^\infty v_n \phi_n  \Big\|_{\dot H^\beta(\Omega)}
  := \Big(
    \sum_{n=0}^\infty \lambda_n^\beta v_n^2
  \Big)^{1/2}.
\]

Finally, we introduce the following conventions: if $ D \subset \mathbb
R^l(l=1,2.3,4) $ is Lebesgue measurable, then $ \dual{p,q}_D := \int_D p \cdot q $
for scalar  or vector valued functions $ p $ and $ q $; the notation $ C_\times $
means a positive constant depending only on its subscript(s), and its value may
differ at each occurrence; let $ L $ be the Lipschitz constant of $ f $ and assume $
f(0) = 0 $.

\section{Regularity}
\label{sec:regu}
Define a bounded linear operator
\[
  S: {}_0H^{-\alpha/2}(0,T;L^2(\Omega)) \to {}_0H^{\alpha/2}(0,T;L^2(\Omega))
  \cap L^2(0,T;\dot H^1(\Omega))
\]
by that
\begin{equation}
  \label{eq:weak-form-S}
  \dual{\D_{0+}^\alpha Sg, v}_{{}^0H^{\alpha/2}(0,T;L^2(\Omega))} +
  \dual{\nabla Sg, \nabla v}_{\Omega \times (0,T)} =
  \dual{g, v}_{{}^0H^{\alpha/2}(0,T;L^2(\Omega))}
\end{equation}
for all $ g \in {}_0H^{-\alpha/2}(0,T;L^2(\Omega)) $ and $ v \in
{}^0H^{\alpha/2}(0,T;L^2(\Omega)) \cap L^2(0,T;\dot H^1(\Omega)) $. For any $ g \in
{}_0H^\beta(0,T;L^2(\Omega)) $ with $ -\alpha/2 \leqslant \beta < \infty $, it holds
\begin{equation}
  \label{eq:regu-S}
  \nm{Sg}_{{}_0H^{\alpha+\beta}(0,T;L^2(\Omega))} +
  \nm{Sg}_{{}_0H^\beta(0,T;\dot H^2(\Omega))}
  \leqslant C_{\alpha,\beta} \nm{g}_{{}_0H^\beta(0,T;L^2(\Omega))}.
\end{equation}
We call $ u \in {}_0H^{\alpha/2}(0,T;L^2(\Omega)) \cap L^2(0,T;\dot H^1(\Omega)) $ a
weak solution to problem \cref{eq:model} if
\begin{equation}
  \label{eq:weak_sol}
  u = S \D_{0+}^\alpha u_0 + S f(u).
\end{equation}
\begin{rem}
  For the well-posedness of $ S $ and the derivation of \cref{eq:regu-S}, we refer
  the reader to \cite{Li2009,Li2019SIAM,Luo2019}.
\end{rem}
\begin{rem}
  Assume that $ u $ is a weak solution to problem \cref{eq:model}. By
  \cref{eq:regu-S} we have
  \begin{small}
  \[
    \nm{u-S\D_{0+}^\alpha u_0}_{{}_0H^\alpha(0,T;L^2(\Omega))} +
    \nm{u-S\D_{0+}^\alpha u_0}_{L^2(0,T;\dot H^2(\Omega))}
    \leqslant C_{\alpha,L} \nm{u}_{L^2(0,T;L^2(\Omega))},
  \]
  \end{small}
  so that from \cref{eq:weak-form-S,lem:regu-basic,lem:regu-u0-growth} it follows
  that
  \begin{equation}
    \label{eq:weak_sol_int}
    \dual{\D_{0+}^\alpha(u-u_0), v}_{\Omega \times (0,T)} +
    \dual{\nabla u, \nabla v}_{\Omega \times (0,T)} =
    \dual{f(u), v}_{\Omega \times (0,T)}
  \end{equation}
  for all $ v \in {}^0H^{\alpha/2}(0,T;L^2(\Omega)) \cap L^2(0,T;\dot H^1(\Omega)) $.
\end{rem}

The main results of this section are the following two theorems.
\begin{thm}
  \label{thm:regu}
  Problem \cref{eq:model} admits a unique weak solution $ u $ such that
  \begin{equation}
    \label{eq:regu-C}
    \nm{u}_{C([0,T];L^2(\Omega))} \leqslant
    C_{\alpha,L,T} \nm{u_0}_{L^2(\Omega)}.
  \end{equation}
  If $ 0 < \alpha < 1/3 $, then
  \begin{equation}
    \nm{u}_{{}_0H^{\alpha/2}(0,T;\dot H^2(\Omega))} \leqslant
    C_{\alpha,L,T,\Omega} \nm{u_0}_{L^2(\Omega)}.
  \end{equation}
  If $ \alpha=1/3 $, then, for any $ 0 < \epsilon < 1 $,
  \begin{equation}
    \nm{u}_{{}_0H^{\alpha/2}(0,T;\dot H^{2-\epsilon}(\Omega))} \leqslant
    C_{\alpha,L,T,\Omega} \epsilon^{-1/2} \nm{u_0}_{L^2(\Omega)}.
  \end{equation}
  If $ 1/3 < \alpha < 1 $, then
  \begin{equation}
    \label{eq:73}
    \nm{u}_{{}_0H^{\alpha/2}(0,T;\dot H^{1/\alpha-1}(\Omega))} \leqslant
    C_{\alpha,L,T,\Omega} \nm{u_0}_{L^2(\Omega)}.
  \end{equation}
  If $ 0 < \alpha < 1/2 $, then
  \begin{equation}
    \label{eq:regu-12}
    \nm{u}_{L^2(0,T;\dot H^2(\Omega))}
    \leqslant C_{\alpha,L,T,\Omega} \nm{u_0}_{L^2(\Omega)}.
  \end{equation}
  If $ \alpha=1/2 $, then, for any $ 0 < \epsilon < 1 $,
  \begin{equation}
    \label{eq:regu-13}
    \nm{u}_{L^2(0,T;\dot H^{2-\epsilon}(\Omega))} \leqslant
    C_{\alpha,L,T,\Omega} \epsilon^{-1/2} \nm{u_0}_{L^2(\Omega)}.
  \end{equation}
  If $ 1/2 < \alpha <1 $, then
  \begin{equation}
    \label{eq:regu-14}
    \nm{u}_{L^2(0,T;\dot H^{1/\alpha}(\Omega))} \leqslant
    C_{\alpha,L,T,\Omega} \nm{u_0}_{L^2(\Omega)}.
  \end{equation}
  Moreover, for any $ 0 < \epsilon < 1/4 $,
  \begin{equation}
    \label{eq:regu-11}
    \nm{u}_{{}_0H^{1/2-\epsilon}(0,T;L^2(\Omega))} \leqslant
    C_{\alpha,L,T} \epsilon^{-1/2} \nm{u_0}_{L^2(\Omega)}.
  \end{equation}
\end{thm}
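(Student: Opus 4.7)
The plan is to decompose $u = S\D_{0+}^\alpha u_0 + Sf(u)$ and analyze the two pieces separately, using the smoothing estimate \cref{eq:regu-S} for the nonlinear term and the regularity of the linear evolution $S\D_{0+}^\alpha u_0$ for the remaining bound.

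First I would establish existence and uniqueness by a Banach fixed-point argument. Define $\mathcal T v := S\D_{0+}^\alpha u_0 + Sf(v)$ on $L^2(0,T;L^2(\Omega))$; since $f$ is Lipschitz with $f(0)=0$, $f(v)\in L^2(0,T;L^2(\Omega))$ whenever $v$ is, and applying \cref{eq:regu-S} with $\beta=0$ to $Sf(v) - Sf(w) = S(f(v)-f(w))$, together with \cref{lem:regu-basic} to convert the $\alpha$-regularity gain back to $L^2$ over a short interval, yields
\[
  \nm{\mathcal T v - \mathcal T w}_{L^2(0,T_0;L^2(\Omega))}
  \leqslant C_{\alpha,L}\,T_0^{\alpha}\,\nm{v-w}_{L^2(0,T_0;L^2(\Omega))}
\]
for $T_0$ small. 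Contraction on a short subinterval gives local existence, and iterating over $[0,T]$ (or a weighted Bielecki-type norm) extends to a unique solution on the full interval; the estimate \cref{eq:regu-C} then follows by taking $C([0,T];L^2(\Omega))$ norms in the fixed-point identity and using a suitable Sobolev embedding on both pieces.

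For the $\alpha$-dependent estimates, the nonlinear piece is the easier one. Since $\nm{f(u(t))}_{L^2(\Omega)} \leqslant L\nm{u(t)}_{L^2(\Omega)}$ and $u\in C([0,T];L^2(\Omega))$, we have $f(u)\in L^2(0,T;L^2(\Omega))$, so \cref{eq:regu-S} with $\beta=0$ yields
\[
  \nm{Sf(u)}_{{}_0H^\alpha(0,T;L^2(\Omega))}
  + \nm{Sf(u)}_{L^2(0,T;\dot H^2(\Omega))}
  \leqslant C_{\alpha,L,T}\,\nm{u_0}_{L^2(\Omega)},
\]
which already dominates the required contribution to each of \cref{eq:73,eq:regu-12,eq:regu-13,eq:regu-14}. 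The threshold behaviour at $\alpha=1/3$ and $\alpha=1/2$ must therefore come entirely from $S\D_{0+}^\alpha u_0$, the solution of the linear homogeneous problem. Using the eigenfunction expansion $S\D_{0+}^\alpha u_0 = \sum_n E_\alpha(-t^\alpha\lambda_n) u_{0,n}\phi_n$ and the decay $\snm{E_\alpha(-t^\alpha\lambda_n)} \leqslant C/(1+t^\alpha\lambda_n)$, the temporal $\dot H^{2\beta}$ norms reduce to the integrability in $t$ of $\lambda_n^\beta(1+t^\alpha\lambda_n)^{-1}$, which is the origin of the critical exponents $1/\alpha - 1$ and $1/\alpha$; at the endpoints, the $\epsilon^{-1/2}$ blow-up is traced back to the $1/\sqrt\gamma$ constant in the last part of \cref{lem:regu-basic}.

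For the sharp temporal estimate \cref{eq:regu-11}, I would return to the identity $u = S\D_{0+}^\alpha u_0 + Sf(u)$ in ${}_0H^{1/2-\epsilon}(0,T;L^2(\Omega))$; the $1/\sqrt{\gamma}$ bound in \cref{lem:regu-basic} applied with $\gamma=1/2-\epsilon$ to the nonlinear piece produces precisely the factor $\epsilon^{-1/2}$, while the linear piece is handled by the same Mittag-Leffler computation as above. The main obstacle I anticipate is the crisp bookkeeping at the critical exponents $\alpha=1/3,1/2$, where one must interpolate between the subcritical and supercritical regimes while retaining the explicit $\epsilon^{-1/2}$ dependence and keeping the remaining constants $\alpha$- and $T$-uniform; a secondary subtlety is the sharpness of the Mittag-Leffler decay in the low-frequency tail $t^\alpha\lambda_n\lesssim 1$, where $(1+t^\alpha\lambda_n)^{-1}$ is essentially constant and extra care is needed to avoid spurious logarithms.
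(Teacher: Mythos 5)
Your overall architecture (split $u = S\D_{0+}^\alpha u_0 + Sf(u)$, treat the linear part by the Mittag--Leffler expansion and the nonlinear part by the smoothing estimate \cref{eq:regu-S}) matches the paper's, and your contraction-mapping route to existence and uniqueness is a legitimate alternative to the paper's Picard iteration in $C([0,T];L^2(\Omega))$ with $\Gamma$-function bounds and its energy-plus-fractional-Gr\"onwall uniqueness argument. However, there is a genuine gap in the regularity part. A single application of \cref{eq:regu-S} with $\beta=0$ gives only $Sf(u)\in{}_0H^{\alpha}(0,T;L^2(\Omega))\cap L^2(0,T;\dot H^2(\Omega))$, and interpolating this pair at temporal order $\alpha/2$ yields at best ${}_0H^{\alpha/2}(0,T;\dot H^1(\Omega))$. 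That does \emph{not} dominate the claimed ${}_0H^{\alpha/2}(0,T;\dot H^{s}(\Omega))$ bounds when $s>1$, which is exactly the situation for $0<\alpha<1/3$ ($s=2$), $\alpha=1/3$ ($s=2-\epsilon$), and $1/3<\alpha<1/2$ ($s=1/\alpha-1\in(1,2)$); it is also insufficient for \cref{eq:regu-11} when $\alpha<1/2$, since ${}_0H^\alpha\not\hookrightarrow{}_0H^{1/2-\epsilon}$ there. The missing idea is the second bootstrap the paper performs: from the first step one gets $u\in{}_0H^{\alpha/2}(0,T;L^2(\Omega))$, then the Lipschitz composition result \cite[Lemma 28.1]{Tartar2007} gives $f(u)\in{}_0H^{\alpha/2}(0,T;L^2(\Omega))$, and a second application of \cref{eq:regu-S} with $\beta=\alpha/2$ upgrades the nonlinear part to ${}_0H^{3\alpha/2}(0,T;L^2(\Omega))\cap{}_0H^{\alpha/2}(0,T;\dot H^2(\Omega))$, which is what actually dominates all the stated bounds (with further iterations for very small $\alpha$ in the case of \cref{eq:regu-11}). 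Your claim that the $\beta=0$ estimate ``already dominates the required contribution'' is therefore false in precisely the regimes where the theorem's statement is delicate.

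A secondary issue: obtaining \cref{eq:regu-C} from an $L^2(0,T;L^2(\Omega))$ fixed point ``by a suitable Sobolev embedding'' does not work for $\alpha\leqslant 1/2$, since ${}_0H^{\alpha}(0,T;L^2(\Omega))$ does not embed into $C([0,T];L^2(\Omega))$ then. The paper sidesteps this by running the iteration directly in $C([0,T];L^2(\Omega))$, using the kernel bound $\nm{E(t)}_{\mathcal L(L^2(\Omega))}\leqslant C_\alpha t^{\alpha-1}$ (\cref{lem:911}); you would need either to do the same (a weighted sup-norm makes the map contractive on all of $[0,T]$) or to bootstrap the temporal regularity past order $1/2$ before invoking an embedding. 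Your treatment of the linear part and of the origin of the thresholds at $\alpha=1/3,1/2$ and of the $\epsilon^{-1/2}$ factors is consistent with the paper's appendix computation.
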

\begin{thm}
  \label{thm:regu-higher}
  Assume that $ u_0 \in \dot H^{2\delta}(\Omega) $ with $ 0 < \delta \leqslant 1 $.
  Then the weak solution $ u $ to problem \cref{eq:model} satisfies that $ u' \in
  C((0,T];L^2(\Omega)) $ and
  \begin{equation}
    \label{eq:lsj-1}
    \sup_{0 < t \leqslant T} t^{1-\alpha \delta}
    \nm{u'(t)}_{L^2(\Omega)} \leqslant
    C_{\alpha,L,T,\Omega} \delta^{-1} \nm{u_0}_{\dot H^{2\delta}(\Omega)}.
  \end{equation}
  Moreover, if $ 0 < \delta \leqslant 1/2 $, then 
  \begin{small}
  \begin{equation}
    \label{eq:lsj-2}
    \sup_{0 < t \leqslant T}
    t^{1-\alpha(\delta+1/2)} \nm{u'(t) - (S\D_{0+}^\alpha u_0)'(t)}_{\dot H^1(\Omega)}
    \leqslant C_{\alpha,L,T,\Omega} \delta^{-2}
    \nm{u_0}_{\dot H^{2\delta}(\Omega)}.
  \end{equation}
  \end{small}
  In particular, if $ \delta=1/2 $, then
  \begin{equation}
    \label{eq:u'-h1}
    \sup_{0 < t \leqslant T } t \nm{u'(t)}_{\dot H^1(\Omega)}
    \leqslant C_{\alpha,L,T,\Omega} \nm{u_0}_{\dot H^1(\Omega)}.
  \end{equation}
\end{thm}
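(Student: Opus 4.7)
The natural starting point is the integral identity $u = v + w$ with $v := S\D_{0+}^\alpha u_0$ and $w := Sf(u)$, so that $u' - (S\D_{0+}^\alpha u_0)'$ in \cref{eq:lsj-2} is precisely $w'$. I would bound $v'$ directly from the spectral representation, and then bound $w'$ via a perturbation argument fed into a weakly singular Gronwall inequality.

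First, using the eigenbasis $\{\phi_n\}$, one identifies $v(t) = \sum_n E_\alpha(-\lambda_n t^\alpha)\, u_0^n \phi_n$, so that termwise differentiation gives $v'(t) = -t^{\alpha-1}\, A\, E_{\alpha,\alpha}(-t^\alpha A)\, u_0$ with $A := -\Delta$. Writing $A\, E_{\alpha,\alpha}(-t^\alpha A) = A^{1-\delta} E_{\alpha,\alpha}(-t^\alpha A)\, A^\delta$ and invoking the scalar bound $\sup_{\mu>0} \mu^{1-\delta} E_{\alpha,\alpha}(-\mu) \leqslant C\delta^{-1}$ (whose $\delta^{-1}$ factor comes from the behaviour of $E_{\alpha,\alpha}$ near the origin) yields
\[
  \nm{v'(t)}_{L^2(\Omega)} \leqslant C_{\alpha,\Omega} \delta^{-1} t^{\alpha\delta-1} \nm{u_0}_{\dot H^{2\delta}(\Omega)},
\]
and by the analogous bound with an extra factor $A^{1/2}$,
\[
  \nm{v'(t)}_{\dot H^1(\Omega)} \leqslant C_{\alpha,\Omega}\delta^{-1} t^{\alpha(\delta+1/2)-1} \nm{u_0}_{\dot H^{2\delta}(\Omega)}.
\]

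Next, I would set up a Volterra-type inequality for $u'$. From \cref{thm:regu} one has $u\in C([0,T];L^2(\Omega))$, hence $f(u)\in C([0,T];L^2(\Omega))$ with $\nm{f(u(t))}_{L^2(\Omega)} \leqslant L\nm{u(t)}_{L^2(\Omega)} \leqslant C\nm{u_0}_{L^2(\Omega)}$. A Picard iteration applied to $u = v + Sf(u)$ in the weighted space $\{\phi: t^{1-\alpha\delta}\phi'(t)\in L^\infty(0,T;L^2(\Omega))\}$ shows that $u'$ exists on $(0,T]$, and differentiating the convolution $w(t) = \int_0^t(t-s)^{\alpha-1}E_{\alpha,\alpha}(-(t-s)^\alpha A)f(u(s))\,ds$ under the integral after substituting $s\mapsto t-s$ produces
\[
  w'(t) = t^{\alpha-1} E_{\alpha,\alpha}(-t^\alpha A)f(u_0) + \int_0^t (t-s)^{\alpha-1} E_{\alpha,\alpha}(-(t-s)^\alpha A) f'(u(s))\, u'(s)\,ds.
\]
Combining the estimate $t^{\alpha-1} \leqslant T^{\alpha(1-\delta)} t^{\alpha\delta-1}$ with $\nm{f(u_0)}_{L^2(\Omega)}\leqslant L\nm{u_0}_{L^2(\Omega)}$ and inserting $\nm{u'(t)}_{L^2(\Omega)} \leqslant \nm{v'(t)}_{L^2(\Omega)} + \nm{w'(t)}_{L^2(\Omega)}$ gives a weakly singular Volterra inequality with kernel $(t-s)^{\alpha-1}$, which closes under Henry's generalized Gronwall inequality and delivers \cref{eq:lsj-1}.

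For \cref{eq:lsj-2} I would run the analogous bound in $\dot H^1$; here the leading boundary term needs the extra regularity $f(u_0)\in\dot H^{2\delta}(\Omega)$ (valid for $\delta\leqslant 1/2$ since Lipschitz post-composition preserves $\dot H^{2\delta}$ in that range), which upgrades the smoothing factor to $t^{-\alpha(1/2-\delta)}$ and produces $t^{\alpha(\delta+1/2)-1}\nm{u_0}_{\dot H^{2\delta}(\Omega)}$. Plugging the already-proved $L^2$-bound on $u'$ into the convolution and applying Gronwall once more yields the stated estimate, and \cref{eq:u'-h1} is the $\delta=1/2$ specialization. The main obstacle will be rigorously justifying the pointwise differentiation and carefully tracking the $\delta^{-1}$ and $\delta^{-2}$ prefactors: one power of $\delta^{-1}$ enters through the Mittag-Leffler bound at $\gamma = 1-\delta$, and a second power arises in \cref{eq:lsj-2} because closing the Gronwall in $\dot H^1$ reinserts the already weighted $L^2$-estimate on $u'$.
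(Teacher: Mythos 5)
Your proposal follows essentially the same route as the paper: the same splitting of $u'$ into $g(t) = (S\D_{0+}^\alpha u_0)'(t) + E(t)f(u_0)$ plus the Volterra term $\int_0^t E(t-s)f'(u(s))u'(s)\,\mathrm{d}s$, the same smoothing estimates for $E$, and the same observation that $f(u_0) \in \dot H^{2\delta}(\Omega)$ for $\delta \leqslant 1/2$ to handle the boundary term in the $\dot H^1$ estimate. The only real difference is packaging: where you invoke Henry's singular Gr\"onwall inequality, the paper sums the iterated convolution explicitly over the Picard iterates $v_k$ (its \cref{eq:426}), and it settles the existence and continuity of $u'$ --- which you correctly flag as the main obstacle --- by proving a uniform weighted bound on $v_k'$, establishing equicontinuity of $v_k'-g$ on $[a,T]$, and applying Arzel\`a--Ascoli before passing to the limit; you would need some equivalent of this step, since writing a Volterra inequality for $\nm{u'(t)}_{L^2(\Omega)}$ presupposes that $u'$ already lives in the weighted space. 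One quantitative point to fix: the factor $\delta^{-1}$ you attach to $\sup_{\mu>0}\mu^{1-\delta}E_{\alpha,\alpha}(-\mu)$ is spurious --- that supremum is bounded by $C_\alpha$ uniformly in $\delta$ (consistent with \cref{eq:u0-L2-growth}, which carries no $\delta^{-1}$). The genuine source of the $\delta^{-1}$ in \cref{eq:lsj-1} is the resolvent of the Volterra inequality, i.e.\ the factor $\Gamma(\alpha\delta)\leqslant C_\alpha\delta^{-1}$ arising from the Beta-function integrals $\int_0^t(t-s)^{\alpha-1}s^{\alpha\delta-1}\,\mathrm{d}s$; if you keep your extra $\delta^{-1}$ in the bound on $v'$ you would only obtain $\delta^{-2}$ in \cref{eq:lsj-1}, weaker than the stated estimate. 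With that correction, the second power $\delta^{-1}$ in \cref{eq:lsj-2} indeed appears, as you say, when the weighted $L^2$ bound on $u'$ is reinserted into the convolution (again through a Beta function $B(\alpha/2,\alpha\delta)$), and \cref{eq:u'-h1} then follows from \cref{eq:lsj-2} together with \cref{eq:u0-h1-growth}.
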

\begin{rem}
  By \cref{eq:weak_sol}, \cref{eq:regu-C}, \cref{lem:911} and
  \cref{lem:regu-u0-growth}, it is evident that $ u(0) = u_0 $.
\end{rem}
\begin{rem} 
  For $ u_0 \in \dot H^2(\Omega) $, \cite[Theorem 3.1]{Jin2018} has already contained
  the following regularity estimate:
  \[
    \nm{u'(t)}_{L^2(\Omega)} \leqslant C t^{\alpha-1}
    \nm{u_0}_{\dot H^2(\Omega)}, \quad 0 < t \leqslant T.
  \]
  We also note that a simple modification of the proof of \cref{eq:lsj-2}
  gives
  \begin{small}
  \[
    \sup_{0 < t \leqslant T } t^{1-\alpha(\delta+1)}
    \nm{u'(t) - (S\D_{0+}^\alpha u_0)'(t) - E(t)f(u_0)}_{L^2(\Omega)}
    \leqslant  C_{\alpha,L,T,\Omega} \delta^{-2}
    \nm{u_0}_{\dot H^{2\delta}(\Omega)},
  \]
  \end{small}
  for all $ u_0 \in \dot H^{2\delta}(\Omega) $ with $ 0 < \delta \leqslant 1 $, where
  $ E $ is defined by \cref{eq:E-def}.
\end{rem}
\begin{rem}
  For a semilinear fractional evolution equation with an almost sectorial operator,
  Wang et al.~\cite{Wang-Chen-Xiao2019} has derived the unique existence of the mild
  solution and the classical solution.
\end{rem}

The purpose of the remaining of this section is to prove the above two theorems. To
this end, let us first introduce an integral representation of $ S $. For any $ g \in
L^p(0,T;L^2(\Omega)) $ with $ p > 1/\alpha $, we have that \cite{Jin2018}
\begin{equation}
  \label{eq:S-E}
  Sg(t) = \int_0^t E(t-s) g(s) \, \mathrm{d}s,
  \quad 0 \leqslant t \leqslant T,
\end{equation}
where
\begin{equation}
  \label{eq:E-def}
  E(t) := \frac1{2\pi i} \int_0^\infty e^{-rt}\big(
    (r^\alpha e^{-i\alpha\pi} - \Delta)^{-1} -
    (r^\alpha e^{i\alpha\pi} - \Delta)^{-1}
  \big) \, \mathrm{d}r.
\end{equation}

\begin{lem} 
  \label{lem:E}
  The $ \mathcal L(L^2(\Omega),\!\dot H^1\!(\Omega)) $-valued function $ E $ is
  analytic on $ (0,\infty) $ with
  \begin{equation}
    \label{eq:E}
    \nm{E(t)}_{\mathcal L(L^2(\Omega))} +
    t^{\alpha/2} \nm{E(t)}_{\mathcal L(L^2(\Omega),\dot H^1(\Omega))} +
    t\nm{E'(t)}_{\mathcal L(L^2(\Omega))} \leqslant
    C_\alpha t^{\alpha-1}
  \end{equation}
  for all $ t > 0 $. For any $ 0 \leqslant \delta \leqslant 1/2 $,
  \begin{equation}
    \label{eq:2delta-h1}
    \sup_{t > 0} t^{1-\alpha(1/2+\delta)}
    \nm{E(t)}_{\mathcal L(\dot H^{2\delta}(\Omega), \dot H^1(\Omega))}
    \leqslant C_\alpha.
  \end{equation}
  In addition, for any $ 0 < t < T $ and $ 0 < \Delta t \leqslant
  T-t $,
  \begin{equation}
    \label{eq:E2}
    \int_0^t \Nm{ E(s+\Delta t) - E(s) }_{
      \mathcal L(L^2(\Omega))
    } \, \mathrm{d}s \leqslant C_{\alpha,T,\Omega} (\Delta t)^\alpha.
  \end{equation}
\end{lem}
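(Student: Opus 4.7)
The plan is to rewrite $E(t)$ as a Dunford--Hankel contour integral and exploit sectorial resolvent estimates. Since $-\Delta$ is positive self-adjoint, the operator-valued map $z\mapsto(z^\alpha-\Delta)^{-1}$ is holomorphic on the sector $\Sigma_\theta:=\{z\neq 0:|\arg z|<\theta\}$ for any $\theta<\pi/\alpha$. The representation in the statement is the Hankel integral collapsed onto the cut $(-\infty,0)$, so Cauchy's theorem gives
\[
  E(t)=\frac{1}{2\pi i}\int_{\Gamma}e^{zt}(z^\alpha-\Delta)^{-1}\,\mathrm{d}z,
\]
where $\Gamma=\Gamma_{\theta,1/t}$ consists of the two rays $\{re^{\pm i\theta}:r\geq 1/t\}$ joined by the arc $\{t^{-1}e^{i\varphi}:|\varphi|\leq\theta\}$, for some fixed $\theta\in(\pi/2,\pi/\alpha)$. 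Analyticity of $E$ on $(0,\infty)$ (in fact on a sector around it) is immediate from this representation, since the integrand is holomorphic in $t$ and the integral converges uniformly on compact subsets.

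The core tool is the resolvent estimate
\[
  \nm{(z^\alpha-\Delta)^{-1}}_{\mathcal L(\dot H^{2\delta_1},\dot H^{2\delta_2})}
  \leq C_{\alpha,\theta}\,|z|^{\alpha(\delta_2-\delta_1-1)},\quad 0\leq\delta_2-\delta_1\leq 1,\ z\in\Sigma_\theta,
\]
which follows from the spectral expansion in $\{\phi_n\}$: the angular constraint yields $|z^\alpha+\lambda_n|\geq C_\theta(|z|^\alpha+\lambda_n)$, and then the elementary Young-type inequality $\lambda^{\mu}\leq|z|^{\alpha(\mu-1)}(|z|^\alpha+\lambda)$ for $\mu\in[0,1]$ finishes the job. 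Given this, the three bounds in \cref{eq:E} follow by taking $(\delta_1,\delta_2)\in\{(0,0),(0,1/2)\}$; the $E'$ bound comes from differentiating under the integral, which inserts an extra factor $z$ (so $|z|^{1-\alpha}$ in place of $|z|^{-\alpha}$). On $\Gamma$ one has $|e^{zt}|\leq e^{-c_\theta rt}$ on the rays with $c_\theta:=-\cos\theta>0$, and $|e^{zt}|\leq e$ on the arc of radius $1/t$, so standard power-integral estimates of the form $\int_{1/t}^\infty e^{-c_\theta rt}r^\beta\,\mathrm{d}r\leq C_{\alpha,\theta}\,t^{-\beta-1}$ produce the claimed powers of $t$. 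The bound \cref{eq:2delta-h1} is the same computation with $(\delta_1,\delta_2)=(\delta,1/2)$, noting $\alpha(1/2+\delta)<1$ for $\delta\leq 1/2$.

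For \cref{eq:E2}, I split the integral at $s=\Delta t$. On $[0,\Delta t]$, the triangle inequality and the first bound of \cref{eq:E} yield $\int_0^{\Delta t}C_\alpha\bigl(s^{\alpha-1}+(s+\Delta t)^{\alpha-1}\bigr)\mathrm{d}s\leq C_\alpha(\Delta t)^\alpha$. On $[\Delta t,t]$, writing $E(s+\Delta t)-E(s)=\int_0^{\Delta t}E'(s+\tau)\,\mathrm{d}\tau$, applying the third bound of \cref{eq:E} and Fubini reduces matters to $C_\alpha\,\Delta t\int_{\Delta t}^{t}s^{\alpha-2}\,\mathrm{d}s\leq C_\alpha(\Delta t)^\alpha$.

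The main technical step is the contour deformation together with the uniform resolvent bound on $\Sigma_\theta$; both are standard for sectorial operators once the observation $\alpha\theta<\pi$ is made. After that, every stated estimate reduces to integrating a power of $r$ against a decaying exponential on a Hankel contour.
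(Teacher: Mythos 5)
Your proof is correct, and it diverges from the paper's in an interesting way on the one part the paper actually writes out. For analyticity, \cref{eq:E} and \cref{eq:2delta-h1}, the paper simply cites \cite{Jin2018}; your Hankel-contour argument with the sectorial resolvent bound $\nm{(z^\alpha-\Delta)^{-1}}_{\mathcal L(\dot H^{2\delta_1}(\Omega),\dot H^{2\delta_2}(\Omega))}\leqslant C|z|^{\alpha(\delta_2-\delta_1-1)}$ is the standard way to establish these and checks out (one quibble: the contour must stay in the slit plane where the principal branch of $z^\alpha$ is defined, so you should take $\theta\in(\pi/2,\pi)$ rather than merely $\theta<\pi/\alpha$; since $\alpha<1$ this costs nothing). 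For \cref{eq:E2} the paper works directly from the collapsed representation \cref{eq:E-def}: it bounds the resolvent difference by $C_{\alpha,\Omega}(1+r^\alpha)^{-1}$, reduces everything to the scalar double integral $\int_0^t\int_0^\infty(1-e^{-r\Delta t})e^{-rs}(1+r^\alpha)^{-1}\,\mathrm{d}r\,\mathrm{d}s$, and splits at $r=1$. You instead split the $s$-integral at $s=\Delta t$, use $\nm{E(s)}_{\mathcal L(L^2(\Omega))}\leqslant C_\alpha s^{\alpha-1}$ on the near piece and $E(s+\Delta t)-E(s)=\int_0^{\Delta t}E'(s+\tau)\,\mathrm{d}\tau$ together with $\nm{E'(s)}_{\mathcal L(L^2(\Omega))}\leqslant C_\alpha s^{\alpha-2}$ on the far piece. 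Both computations are elementary once the kernel bounds are in hand; yours has the advantage of reusing \cref{eq:E} as a black box and in fact yields a constant independent of $T$ and $\Omega$, while the paper's avoids any appeal to the derivative bound and stays entirely at the level of the scalar kernel $(1+r^\alpha)^{-1}$.
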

\begin{proof}
  It is evident that $ E $ is an analytic $ \mathcal L(L^2(\Omega), \dot H^1(\Omega))
  $-valued function on $ (0,\infty) $, and we refer the reader to \cite{Jin2018} for
  the proof of \cref{eq:E,eq:2delta-h1}. Now let us prove \cref{eq:E2}. By
  \cref{eq:E-def} and the fact that
  \begin{small}
  \[
    \nm{
      (r^\alpha e^{\pm i\alpha \pi} - \Delta)^{-1}
    }_{\mathcal L(L^2(\Omega))} \leqslant
    C_{\alpha,\Omega} (1+r^\alpha)^{-1}
    \text{ for all } r \geqslant 0,
  \]
  \end{small}
  we obtain
  \begin{small}
  \begin{align*}
    & \int_0^t \nm{ E(s+\Delta t) - E(s) }_{
      \mathcal L(L^2(\Omega))
    } \, \mathrm{d}s \\
    \leqslant{} & C_{\alpha,\Omega}
    \int_0^t \int_0^\infty (1-e^{-r\Delta t}) e^{-rs}
    (1+r^\alpha)^{-1} \, \mathrm{d}r \, \mathrm{d}s \\
    \leqslant{}& C_{\alpha,\Omega} (\mathbb I_1 + \mathbb I_2),
  \end{align*}
  \end{small}
  where
  \begin{small}
  \begin{align*}
    \mathbb I_1 &:= t \int_0^1 1-e^{-r\Delta t} \, \mathrm{d}r, \\
    \mathbb I_2 &:= \int_0^t \int_1^\infty (1-e^{-r\Delta t})
    e^{-rs} r^{-\alpha} \, \mathrm{d}r \, \mathrm{d}s.
  \end{align*}
  \end{small}

  A simple calculation gives that
  \begin{small}
  \begin{align*}
    \mathbb I_1 & = t(\Delta t)^{-1}(\Delta t + e^{-\Delta t} - 1) <
    t (\Delta t)^{-1} \big( 1 - (1-\Delta t)e^{\Delta t} \big) \\
    & < t (\Delta t)^{-1}
    \big( 1 - (1-\Delta t)(1+\Delta t) \big) = t \Delta t,
  \end{align*}
  \end{small}
  and
  \begin{small}
  \begin{align*}
    \mathbb I_2 &=
    \int_1^\infty (1-e^{-r\Delta t})
    r^{-1-\alpha}(1-e^{-rt}) \, \mathrm{d}r <
    \int_1^\infty (1-e^{-r\Delta t}) r^{-1-\alpha} \, \mathrm{d}r \\
    & =
    (\Delta t)^\alpha \int_{\Delta t}^\infty
    s^{-1-\alpha}(1-e^{-s }) \, \mathrm{d}s <
    C_\alpha (\Delta t)^\alpha.
  \end{align*}
  \end{small}
  Finally, combining the above two estimates proves \cref{eq:E2} and hence this
  lemma.
\end{proof}

\begin{lem} 
  \label{lem:911}
  If $ v \in C([0,T];L^2(\Omega)) $, then
  \[
    Sf(v) \in C([0,T];L^2(\Omega)) \text{ with }
    (Sf(v))(0) = 0
  \]
  and
  \begin{align}
    \nm{Sf(v)}_{C([0,T];L^2(\Omega))} \leqslant
    C_{\alpha,L,T} \nm{v}_{C([0,T];L^2(\Omega))}.
  \end{align}
  Moreover, if $ v, w \in C([0,T];L^2(\Omega)) $, then
  \begin{equation}
    \label{eq:Sfv-Sfu}
    \nm{Sf(v) - Sf(w)}_{C([0,T];L^2(\Omega))} \leqslant
    C_{\alpha,L,T} \nm{v-w}_{C([0,T];L^2(\Omega))}.
  \end{equation}
\end{lem}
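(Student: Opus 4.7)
The plan is to work directly with the integral representation \eqref{eq:S-E}, so the first step is to check its applicability: since $f$ is Lipschitz with $f(0)=0$, we have $\snm{f(r)}\leqslant L\snm{r}$, hence for any $v\in C([0,T];L^2(\Omega))$ the superposition $f(v)$ lies in $L^\infty(0,T;L^2(\Omega))$, which is contained in $L^p(0,T;L^2(\Omega))$ for every $p>1/\alpha$. Therefore
\[
  Sf(v)(t)=\int_0^t E(t-s)\,f(v(s))\,\mathrm ds,\qquad 0\leqslant t\leqslant T,
\]
and in particular $(Sf(v))(0)=0$.

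Next I would establish the $C([0,T];L^2(\Omega))$ bound. Using the first inequality in \eqref{eq:E}, namely $\nm{E(t)}_{\mathcal L(L^2(\Omega))}\leqslant C_\alpha t^{\alpha-1}$, together with $\nm{f(v(s))}_{L^2(\Omega)}\leqslant L\nm{v(s)}_{L^2(\Omega)}$, a direct computation gives
\[
  \nm{Sf(v)(t)}_{L^2(\Omega)}\leqslant C_\alpha L\int_0^t(t-s)^{\alpha-1}\mathrm ds\,\nm{v}_{C([0,T];L^2(\Omega))}
  \leqslant C_{\alpha,L,T}\nm{v}_{C([0,T];L^2(\Omega))}.
\]

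For continuity at an interior point $t\in(0,T)$ with increment $0<\Delta t\leqslant T-t$, I would split
\[
  Sf(v)(t+\Delta t)-Sf(v)(t)
  =\int_t^{t+\Delta t}\!\!E(t+\Delta t-s)f(v(s))\,\mathrm ds
  +\int_0^t\!\bigl(E(t+\Delta t-s)-E(t-s)\bigr)f(v(s))\,\mathrm ds.
\]
The first piece is bounded in $L^2(\Omega)$ by $C_\alpha L\nm{v}_{C([0,T];L^2(\Omega))}(\Delta t)^\alpha/\alpha$ via \eqref{eq:E}. For the second piece, after the substitution $s\mapsto t-s$ the operator-norm integral becomes $\int_0^t\nm{E(s+\Delta t)-E(s)}_{\mathcal L(L^2(\Omega))}\mathrm ds$, which \eqref{eq:E2} controls by $C_{\alpha,T,\Omega}(\Delta t)^\alpha$, so this piece is bounded by $C_{\alpha,L,T,\Omega}\nm{v}_{C([0,T];L^2(\Omega))}(\Delta t)^\alpha$. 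A symmetric argument handles $\Delta t<0$, and continuity at $t=0$ follows from the bound in the preceding paragraph. Hence $Sf(v)\in C([0,T];L^2(\Omega))$.

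For the Lipschitz estimate \eqref{eq:Sfv-Sfu}, linearity of the convolution in \eqref{eq:S-E} yields
\[
  Sf(v)(t)-Sf(w)(t)=\int_0^t E(t-s)\bigl(f(v(s))-f(w(s))\bigr)\mathrm ds,
\]
and using $\nm{f(v(s))-f(w(s))}_{L^2(\Omega)}\leqslant L\nm{v(s)-w(s)}_{L^2(\Omega)}$ in the same computation as before gives the desired bound. The main technical point is the use of \eqref{eq:E2} to absorb the long-range part of the time-shift in the kernel $E$; the endpoint singularity $t^{\alpha-1}$ of $E$ by itself is integrable but does not directly furnish modulus-of-continuity control, so \eqref{eq:E2} is indispensable.
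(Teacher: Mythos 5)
Your proposal is correct and follows exactly the route the paper intends: the paper only remarks that Lemma \ref{lem:911} follows from a ``straightforward calculation'' using the integral representation \cref{eq:S-E} and \cref{lem:E}, and your argument fills in precisely those details, with \cref{eq:E} giving the uniform bound and \cref{eq:E2} giving the modulus of continuity. No gaps.
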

\begin{lem} 
  \label{lem:g}
  Let
  \begin{equation}
    \label{eq:g-def}
    g(t) := (S\D_{0+}^\alpha u_0)'(t) + E(t) f(u_0),
    \quad 0 < t \leqslant T,
  \end{equation}
  and assume that $ u_0 \in \dot H^{2\delta}(\Omega) $ with $ 0 \leqslant \delta
  \leqslant 1 $.  Then $ g \in C((0,T];L^2(\Omega)) $ and
  \begin{equation}
    \label{eq:g}
    \sup_{0 < t \leqslant T}
    t^{1-\alpha\delta} \nm{g(t)}_{L^2(\Omega)}
    \leqslant C_{\alpha,L,T}
    \nm{u_0}_{\dot H^{2\delta}(\Omega)}.
  \end{equation}
  Moreover, if $ 1/2 \leqslant \delta \leqslant 1 $ then $ g \in C((0,T];\dot
  H^1(\Omega)) $ and
  \begin{equation}
    \sup_{0 < t \leqslant T}
    t^{1-\alpha(\delta-1/2)} \nm{g(t)}_{\dot H^1(\Omega)}
    \leqslant C_{\alpha,L,T} \nm{u_0}_{\dot H^{2\delta}(\Omega)}.
  \end{equation}
\end{lem}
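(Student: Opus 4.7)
The plan is to estimate the two terms of $g$ separately. The second one, $E(t) f(u_0)$, is the easier one: \cref{lem:E} together with Lipschitz continuity of $f$ (and $f(0)=0$) gives directly
\[
\nm{E(t) f(u_0)}_{L^2(\Omega)} \leqslant C_\alpha t^{\alpha-1} L \nm{u_0}_{L^2(\Omega)},
\]
so multiplying by $t^{1-\alpha\delta}$ produces $C t^{\alpha(1-\delta)}\nm{u_0}_{L^2(\Omega)}$, bounded on $(0,T]$ because $\delta \leqslant 1$; the embedding $\nm{u_0}_{L^2(\Omega)} \leqslant \lambda_1^{-\delta}\nm{u_0}_{\dot H^{2\delta}(\Omega)}$ from the spectral definition of $\dot H^{2\delta}(\Omega)$ then yields the first claimed estimate for this summand. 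For the $\dot H^1(\Omega)$ half of the lemma (in the regime $\delta \geqslant 1/2$) the analogue follows from \cref{eq:2delta-h1} applied at its parameter $\delta=0$, which gives $\nm{E(t) f(u_0)}_{\dot H^1(\Omega)} \leqslant C_\alpha t^{\alpha/2-1} L \nm{u_0}_{L^2(\Omega)}$, and again the $t$-weight leaves a harmless $t^{\alpha(1-\delta)}$.

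The harder piece is $(S\D_{0+}^\alpha u_0)'(t)$. My approach would be spectral: writing $u_0 = \sum_n u_n \phi_n$ and using the Riemann-Liouville identity
\[
\D_{0+}^\alpha E_\alpha(-\lambda t^\alpha) + \lambda E_\alpha(-\lambda t^\alpha) = \frac{t^{-\alpha}}{\Gamma(1-\alpha)},
\]
I would verify termwise that $v(t) := \sum_n E_\alpha(-\lambda_n t^\alpha) u_n \phi_n$ lies in ${}_0H^{\alpha/2}(0,T;L^2(\Omega)) \cap L^2(0,T;\dot H^1(\Omega))$ and satisfies the weak equation \cref{eq:weak-form-S} with $g = \D_{0+}^\alpha u_0$; uniqueness of $S$ then forces $v = S\D_{0+}^\alpha u_0$. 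Differentiating termwise on $(0,T]$ (each mode is analytic, and the tail converges uniformly on $[\epsilon,T]$ for any $\epsilon>0$) gives
\[
(S\D_{0+}^\alpha u_0)'(t) = -\sum_n \lambda_n t^{\alpha-1} E_{\alpha,\alpha}(-\lambda_n t^\alpha)\, u_n \phi_n.
\]
Combined with the standard bound $|E_{\alpha,\alpha}(-x)| \leqslant C_\alpha/(1+x)$ for $x \geqslant 0$, factoring $\lambda_n^{2\delta}$ out of each mode, and splitting the cases $\lambda_n t^\alpha \leqslant 1$ versus $\lambda_n t^\alpha \geqslant 1$, one obtains the pointwise bound
\[
\frac{\lambda_n^{2(1-\delta)} t^{2\alpha-2}}{(1+\lambda_n t^\alpha)^2} \leqslant C_\alpha\, t^{2(\alpha\delta - 1)},
\]
which sums to the first claim. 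For the $\dot H^1$ claim, the same spectral sum carries an extra factor $\lambda_n$ inside, so the requirement becomes boundedness of $\lambda_n^{3-2\delta}/(1+\lambda_n t^\alpha)^2$ by $C_\alpha t^{-2-\alpha(1-2\delta)}$; the two-case analysis now needs $3-2\delta \leqslant 2$, which is precisely the hypothesis $\delta \geqslant 1/2$. Continuity of $g$ on $(0,T]$ with values in $L^2(\Omega)$ (respectively $\dot H^1(\Omega)$) follows from the analyticity in \cref{lem:E} and the uniform convergence of the spectral series on compact subsets of $(0,T]$.

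The main obstacle I anticipate is the rigorous justification of the spectral representation for $S\D_{0+}^\alpha u_0$ in the low-regularity case $u_0 \in L^2(\Omega)$: the forcing $\D_{0+}^\alpha u_0$ then lies only in ${}_0H^{-\alpha/2}(0,T;L^2(\Omega))$, so the convolution formula \cref{eq:S-E} does not apply directly, and one must identify the series with $S\D_{0+}^\alpha u_0$ through the weak formulation \cref{eq:weak-form-S} and uniqueness of $S$. Once that identification is secured, the remaining work is a careful but routine case analysis tracking the temporal exponents and the spectral weights $\lambda_n$.
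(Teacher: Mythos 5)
Your proposal is correct and follows essentially the same route as the paper: the paper disposes of this lemma in a remark by combining \cref{lem:E} for the $E(t)f(u_0)$ term with the weighted derivative bounds of \cref{lem:regu-u0-growth}, and the latter is exactly the spectral computation you carry out (the representation \cref{eq:SDu0} differentiated termwise, the Mittag--Leffler decay \cref{eq:ml_grow}, and the two-case analysis in $\lambda_n t^\alpha$, with the constraint $3-2\delta\leqslant 2$ producing the hypothesis $\delta\geqslant 1/2$ for the $\dot H^1$ bound). The identification of the spectral series with $S\D_{0+}^\alpha u_0$ that you flag as the main obstacle is precisely what the paper takes for granted by citing \cref{eq:SDu0} from the literature, and your proposed resolution via the weak formulation and uniqueness of $S$ is sound.
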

\begin{lem}
  \label{lem:w}
  Let $ g $ be defined by \cref{eq:g-def}, assume that $ u_0 \in \dot
  H^{2\delta}(\Omega) $ with $ 0 < \delta \leqslant 1 $, and define
  \[
    w(t) := (S\D_{0+}^\alpha u_0)(t) +
    \int_0^t E(t-s) f(v(s)) \, \mathrm{d}s,
    \quad 0 \leqslant t \leqslant T.
  \]
  If $ v \in C([0,T];L^2(\Omega)) \cap C^1((0,T];L^2(\Omega)) $ satisfies
  \[
    \sup_{0 < t \leqslant T} t^{1-\alpha\delta}
    \nm{v'(t)}_{L^2(\Omega)} < \infty,
  \]
  then $ w \in C([0,T];L^2(\Omega)) \cap C^1((0,T];L^2(\Omega)) $,
  \[
    w'(t) = g(t) + \int_0^t E(t - s) f'(v(s)) v'(s) \, \mathrm{d}s,
    \quad 0 < t < T,
  \]
  and
  \begin{equation}
    \sup_{0 < t \leqslant T} t^{1-\alpha\delta}
    \nm{w'(t)}_{L^2(\Omega)} < \infty.
  \end{equation}
\end{lem}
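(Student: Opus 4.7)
The strategy is to handle the integral term $ I(t) := \int_0^t E(t-s) f(v(s)) \, \mathrm{d}s $ via a change of variable and a difference-quotient argument, then combine the result with $ S\D_{0+}^\alpha u_0 $.

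First, using the bound $ \nm{E(\tau)}_{\mathcal L(L^2(\Omega))} \leqslant C_\alpha \tau^{\alpha-1} $ from \cref{lem:E} together with $ \nm{f(v(s))}_{L^2(\Omega)} \leqslant L \nm{v}_{C([0,T];L^2(\Omega))} $ (Lipschitz of $ f $ with $ f(0)=0 $), the integral $ I $ is well-defined and continuous on $ [0,T] $ with $ I(0) = 0 $. Combined with $ S\D_{0+}^\alpha u_0 \in C([0,T];L^2(\Omega)) $, this yields $ w \in C([0,T];L^2(\Omega)) $.

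Next, substituting $ \tau = t-s $ gives $ I(t) = \int_0^t E(\tau) f(v(t-\tau)) \, \mathrm{d}\tau $, and the difference quotient splits as
\[
  \frac{I(t+h) - I(t)}{h} = \frac{1}{h}\int_t^{t+h} E(\tau) f(v(t+h-\tau)) \, \mathrm{d}\tau +
  \int_0^t E(\tau)\, \frac{f(v(t+h-\tau)) - f(v(t-\tau))}{h} \, \mathrm{d}\tau.
\]
As $ h \to 0^+ $, the first piece tends to $ E(t) f(v(0)) = E(t) f(u_0) $, using continuity of $ E $ at $ t > 0 $ and of $ v $ at $ 0 $ (with the identification $ v(0) = u_0 $ implicit in the fixed-point context that generates $ g $). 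For the second piece, the pointwise limit is $ E(\tau) f'(v(t-\tau)) v'(t-\tau) $, and a uniform integrable dominant is furnished by the fundamental theorem together with monotonicity of $ s \mapsto s^{\alpha\delta-1} $:
\[
  \Nm{\frac{f(v(t+h-\tau)) - f(v(t-\tau))}{h}}_{L^2(\Omega)}
  \leqslant \frac{L}{h} \int_{t-\tau}^{t+h-\tau} \nm{v'(s)}_{L^2(\Omega)} \, \mathrm{d}s
  \leqslant L M (t-\tau)^{\alpha\delta-1},
\]
where $ M := \sup_{0 < s \leqslant T} s^{1-\alpha\delta} \nm{v'(s)}_{L^2(\Omega)} $. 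The Beta identity $ \int_0^t \tau^{\alpha-1}(t-\tau)^{\alpha\delta-1} \, \mathrm{d}\tau = B(\alpha,\alpha\delta) t^{\alpha+\alpha\delta-1} $ makes dominated convergence applicable, and reverting to the variable $ s $ gives the asserted formula for $ w'(t) $.

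The weighted bound then follows by combining $ \nm{g(t)}_{L^2(\Omega)} \leqslant C t^{\alpha\delta-1} $ (from \cref{lem:g}) with the integral estimate $ C_{\alpha,L,M} \, t^{\alpha+\alpha\delta-1} $, giving $ t^{1-\alpha\delta} \nm{w'(t)}_{L^2(\Omega)} \leqslant C + C t^\alpha \leqslant C_{\alpha,L,T,M} $. Continuity of $ w' $ on $ (0,T] $ follows from an analogous dominated-convergence argument on compact subsets of $ (0,T] $, using continuity of $ E $, $ v $, and $ v' $ there. The main obstacle is securing the uniform dominant for the difference quotient near the double singularity of $ E(\tau) $ at $ \tau=0 $ and of $ v'(t-\tau) $ at $ \tau=t $; the Beta-type integrability above is the precise mechanism that resolves it.
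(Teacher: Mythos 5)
Your proof is correct and is essentially the ``routine calculation'' the paper itself only sketches (it defers to the proof of Theorem 3.1 in Jin et al.\ for exactly these techniques): differentiate the convolution by a difference quotient, extract the boundary term $E(t)f(u_0)$, and dominate the remaining integrand by $C\tau^{\alpha-1}(t-\tau)^{\alpha\delta-1}$ so that the Beta integral closes the argument. You also correctly flag that the stated formula requires the identification $v(0)=u_0$, which is implicit in the lemma's intended use on the iterates $v_k$ (for which $v_k(0)=u_0$ by Lemmas \ref{lem:911} and \ref{lem:regu-u0-growth}).
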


\begin{rem}
  By \cref{eq:S-E,lem:E,lem:regu-u0-growth}, a straightforward calculation proves
  \cref{lem:911,lem:g}. Then, by \cref{lem:E,lem:g,lem:regu-u0-growth}, a routine
  calculation gives \cref{lem:w} (we refer the reader to the proof of \cite[Theorem
  3.1]{Jin2018} for the relevant techniques).
\end{rem}

Then let us present a Gr\"onwall-type inequality, which, together with its discrete
version, is crucial in this paper.
\begin{lem} 
  \label{lem:gronwall}
  Assume that $ 0 < \beta < 1/2 $, and $ A $ and $ \epsilon $ are two positive
  constants. If $ y \in {}_0H^\beta(0,T) $ satisfies that
  \begin{equation}
    \label{eq:grondwall-cond}
    \nm{\D_{0+}^\beta y}_{L^2(0,t)}^2 \leqslant
    \epsilon + A \nm{y}_{L^2(0,t)}^2
    \quad \text{ for all } 0 < t < T,
  \end{equation}
  then
  \begin{equation}
    \label{eq:gronwall}
    \nm{\D_{0+}^\beta y}_{L^2(0,t)} \leqslant C_{\beta,A,T} \, \sqrt\epsilon
    \quad \text{ for all } 0 < t < T.
  \end{equation}
\end{lem}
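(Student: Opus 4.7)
The strategy is to change variables to $z := \D_{0+}^\beta y$ and reduce the hypothesis to a classical singular Volterra inequality for the monotone function $\phi(t) := \nm{z}_{L^2(0,t)}^2$, after which Henry's generalised Gr\"onwall inequality will close the argument. Because $y \in {}_0H^\beta(0,T)$, \cref{lem:regu-basic} yields $z \in L^2(0,T)$ and $y = \D_{0+}^{-\beta}z$ by the standard semigroup property of Riemann--Liouville operators, so $\phi$ is well-defined, non-decreasing, absolutely continuous, and $\phi(T) < \infty$.

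The crux is to bound $\nm{y}_{L^2(0,t)}^2$ by an integral of $(t-r)^{\beta-1}$ against $\phi$. Splitting the singular kernel symmetrically as $(s-r)^{\beta-1} = (s-r)^{(\beta-1)/2}(s-r)^{(\beta-1)/2}$ and applying Cauchy--Schwarz to $y(s)=(\D_{0+}^{-\beta}z)(s)$ gives the pointwise bound
\[
  |y(s)|^2 \leqslant \frac{s^\beta}{\beta\,\Gamma(\beta)^2}
  \int_0^s (s-r)^{\beta-1} |z(r)|^2 \,\mathrm{d}r.
\]
Integrating in $s$, swapping the order via Fubini, and using $s^\beta \leqslant T^\beta$, I obtain
\[
  \nm{y}_{L^2(0,t)}^2 \leqslant C_{\beta,T}
  \int_0^t (t-r)^\beta |z(r)|^2 \,\mathrm{d}r.
\]
A Stieltjes integration by parts, using $\phi(0) = 0$ and the fact that $(t-r)^\beta$ vanishes at $r = t$, converts the right-hand side into $C_{\beta,T} \int_0^t (t-r)^{\beta-1} \phi(r)\,\mathrm{d}r$.

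Substituting this into the assumption yields the Volterra inequality
\[
  \phi(t) \leqslant \epsilon + C_{\beta,A,T}
  \int_0^t (t-r)^{\beta-1} \phi(r)\,\mathrm{d}r,
  \qquad 0 < t \leqslant T,
\]
to which the classical Henry--Gr\"onwall lemma applies, giving $\phi(t) \leqslant \epsilon\, E_\beta(C_{\beta,A,T}'\, t^\beta) \leqslant C_{\beta,A,T}\,\epsilon$ uniformly on $[0,T]$, where $E_\beta$ is the Mittag-Leffler function; taking square roots yields \cref{eq:gronwall}.

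The main pitfall I would guard against is the naive Cauchy--Schwarz that integrates out the kernel before introducing $|z|^2$: that route only delivers $\nm{y}_{L^2(0,t)}^2 \lesssim t^{2\beta}\,\nm{z}_{L^2(0,t)}^2$ and hence the useless estimate $\phi(t) \leqslant \epsilon/(1 - C\,t^{2\beta})$ that degenerates once $t$ is of order one. Preserving the full kernel $(s-r)^{\beta-1}$ inside the inner integral, so that the weight $(t-r)^{\beta-1}$ eventually appears against the cumulative quantity $\phi$, is precisely what permits the implicit Mittag-Leffler iteration to succeed uniformly on $[0,T]$.
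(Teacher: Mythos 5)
Your proof is correct, but it takes a genuinely different route from the paper's. The paper never leaves the $L^2$-in-time framework: starting from $\nm{y}_{L^2(0,t)}^2 = \dual{\D_{0+}^\beta y, \D_{t-}^{-\beta}y}_{(0,t)}$ and the coercivity/adjoint identities of \cref{lem:coer}, it absorbs $\frac{1}{2A}\nm{\D_{0+}^\beta y}_{L^2(0,t)}^2$ and is left with the same inequality for $\D_{0+}^{-\beta}y$ at the doubled order $2\beta$; iterating until the order exceeds $1/2$ (plus one extra shift by $1/4$) gives pointwise control of an antiderivative $g$ of $y$, after which the classical Gr\"onwall inequality closes the loop. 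You instead pass immediately to $z=\D_{0+}^\beta y$, prove the weighted pointwise bound $\snm{y(s)}^2 \leqslant \frac{s^\beta}{\beta\Gamma(\beta)^2}\int_0^s (s-r)^{\beta-1}\snm{z(r)}^2\,\mathrm{d}r$ by splitting the kernel symmetrically, and convert the hypothesis into a weakly singular Volterra inequality for $\phi(t)=\nm{z}_{L^2(0,t)}^2$, which Henry's fractional Gr\"onwall lemma resolves; each step (the Cauchy--Schwarz with the split kernel, the Fubini interchange, the Stieltjes integration by parts with $\phi(0)=0$, and the a priori boundedness of $\phi$ needed for the Mittag--Leffler iteration) checks out. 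Your argument is shorter and avoids both the order-doubling bootstrap and \cref{lem:coer}, at the price of invoking the Henry--Gr\"onwall lemma as an external ingredient. One structural advantage of the paper's route worth noting: its proof is reused almost verbatim for the discrete version (\cref{lem:gronwall-discr}), where $V$ is piecewise constant in time and the same duality/doubling steps go through, whereas your Volterra reformulation would require a separate discrete analogue of Henry's inequality to cover that case.
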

\begin{proof}
  For any $ 0 < t < T $, a straightforward computation yields
  \begin{align*}
    & \nm{y}_{L^2(0,t)}^2 = \dual{y,y}_{(0,t)} =
    \dual{y, \D_{t-}^\beta \D_{t-}^{-\beta} y}_{(0,t)}  \\
    ={} &
    \dual{\D_{0+}^\beta y, \D_{t-}^{-\beta} y}_{(0,t)}
    \leqslant \nm{\D_{0+}^\beta y}_{L^2(0,t)}
    \nm{\D_{t-}^{-\beta} y}_{L^2(0,t)}  \\
    \leqslant{} & C_\beta \nm{\D_{0+}^\beta y}_{L^2(0,t)}
    \nm{\D_{0+}^{-\beta} y}_{L^2(0,t)}
    \quad\text{(by \cref{lem:coer})}  \\
    \leqslant{} &
    \frac1{2A} \nm{\D_{0+}^\beta y}_{L^2(0,t)}^2 +
    C_{\beta,A} \nm{\D_{0+}^{-\beta} y}_{L^2(0,t)}^2,
  \end{align*}
  so that \cref{eq:grondwall-cond} implies
  \[
    \nm{\D_{0+}^{2\beta} \D_{0+}^{-\beta} y}_{L^2(0,t)}^2
    \leqslant 2\epsilon + C_{\beta,A} \nm{\D_{0+}^{-\beta} y}_{L^2(0,t)}^2.
  \]
  Letting $ k $ be the maximum integer satisfying that $ 2^k \beta \leqslant 1/2 $
  and repeating the above argument $ k-1 $ times, we obtain
  \[
    \nm{\D_{0+}^{2^k\beta} g_0}_{L^2(0,t)}^2
    \leqslant 2^k\epsilon + C_{\beta,A}
    \nm{g_0}_{L^2(0,t)}^2, \quad 0 < t< T,
  \]
  where $ g_0 := \D_{0+}^{-(2^k-1)\beta} y $. By the same techniques, we have
  \begin{equation}
    \label{eq:732}
    \nm{\D_{0+}^{2^k\beta + 1/4} g}_{L^2(0,t)}^2 \leqslant
    2^{k+1} \epsilon + C_{\beta,A} \nm{g}_{L^2(0,t)}^2,
    \quad 0 < t < T,
  \end{equation}
  where $ g := \D_{0+}^{-1/4} g_0 $. Since
  \begin{align*}
    \snm{ g(t) } &= \Snm{
      \big( \D_{0+}^{-2^k\beta-1/4} \D_{0+}^{2^k\beta+1/4} g \big)(t)
    } \\
    &= \Snm{
      \frac1{\Gamma(2^k\beta+1/4)} \int_0^t (t-s)^{2^k\beta-3/4}
      \D_{0+}^{2^k\beta+1/4} g(s) \, \mathrm{d}s
    } \\
    & \leqslant
    \frac1{\Gamma(2^k\beta+1/4)} \sqrt{
      \int_0^t (t-s)^{2^{k+1}\beta - 3/2} \, \mathrm{d}s
    } \, \nm{\D_{0+}^{2^k\beta + 1/4} g}_{L^2(0,t)} \\
    & \leqslant
    C_{\beta,T} \nm{\D_{0+}^{2^k\beta + 1/4} g}_{L^2(0,t)},
  \end{align*}
  it follows that
  \begin{align*}
    \snm{g(t)}^2 \leqslant C_{\beta,A,T}
    \Big( \epsilon + \nm{g}_{L^2(0,t)}^2 \Big),
    \quad 0 < t < T.
  \end{align*}
  Applying the well-known Gr\"onwall's inequality then yields
  \[
   \nm{g}_{L^2(0,t)}^2 \leqslant C_{\beta,A,T} \epsilon,
    \quad 0 < t < T.
  \]
  Therefore, \cref{eq:gronwall} follows from \cref{eq:732} and the equality
  \[
    \D_{0+}^\beta y = \D_{0+}^{2^k\beta+1/4} g.
  \]
  This completes the proof.
\end{proof}

Finally, we are in a position to prove \cref{thm:regu,thm:regu-higher} as follows.

\medskip\noindent{\bf Proof of \cref{thm:regu}.} Let us first prove that problem
  \cref{eq:model} admits a weak solution which satisfies \cref{eq:regu-C}. For each
  $ k \in \mathbb N_{>0} $, define
  \begin{equation}
    \label{eq:vk}
    v_k := S\D_{0+}^\alpha u_0 + Sf(v_{k-1}),
  \end{equation}
  where $ v_0 := S\D_{0+}^\alpha u_0 $. By \cref{lem:911,lem:regu-u0-growth} we have
  that
  \[
    v_k \in C([0,T];L^2(\Omega)) \quad \text{ for each } k \in \mathbb N
  \]
  and
  \begin{equation}
    \label{eq:v0_v1}
    \nm{v_0}_{C([0,T];L^2(\Omega))} +
    \nm{v_1-v_0}_{C([0,T];L^2(\Omega))} \leqslant
    C_{\alpha,L,T} \nm{u_0}_{L^2(\Omega)}.
  \end{equation}
  By \cref{eq:S-E,eq:E}, a routine calculation gives that, for any $ k \in \mathbb
  N_{>0} $ and $ 0 < t \leqslant T $,
  \[
    \nm{(v_{k+1} - v_k)(t)}_{L^2(\Omega)} \leqslant
    \frac{(L C_\alpha t^\alpha \Gamma(\alpha))^k}{\Gamma(k\alpha+1)}
    \nm{v_1-v_0}_{C([0,T];L^2(\Omega))}.
  \]
  Since
  \[
    \sum_{k=1}^\infty \frac{(L C_\alpha T^\alpha \Gamma(\alpha))^k}
    {\Gamma(k\alpha+1)} < \infty,
  \]
  it follows that
  \begin{equation}
    \label{eq:93}
    \sum_{k=0}^\infty \nm{v_{k+1} - v_k}_{C([0,T];L^2(\Omega))}
    \leqslant C_{\alpha,L,T} \nm{v_1-v_0}_{C([0,T];L^2(\Omega))}.
  \end{equation}
  Hence, $ \{v_k\}_{k=0}^\infty $ is a Cauchy sequence in $ C([0,T];L^2(\Omega)) $.
  Letting $ u $ be the limit of this Cauchy sequence, by \cref{eq:Sfv-Sfu} we have
  \[
    Sf(u) = \lim_{k \to \infty} Sf(v_k)
    \quad \text{ in } C([0,T];L^2(\Omega)),
  \]
  so that passing to the limit $ k \to \infty $ in \cref{eq:vk} yields
  \[
    u = S\D_{0+}^\alpha u_0 + Sf(u).
  \]
  Therefore, $ u $ is a weak solution to problem \cref{eq:model}. Moreover, using
  \cref{eq:v0_v1,eq:93} gives
  \begin{align*}
    \nm{u}_{C([0,T];L^2(\Omega))} =
    \lim_{k \to \infty} \nm{v_k}_{C([0,T];L^2(\Omega))}
    \leqslant C_{\alpha,L,T} \nm{u_0}_{L^2(\Omega)},
  \end{align*}
  which proves \cref{eq:regu-C}.

  Next, let us prove that the weak solution $ u $ is unique. To this end, assume that
  $ \widetilde u \neq u $ is another weak solution to problem \cref{eq:model}.
  Letting $ e = u - \widetilde u $, by \cref{eq:weak_sol_int} we have that, for any $
  0 < t \leqslant T $,
  \begin{align*}
    \dual{\D_{0+}^\alpha e, e}_{\Omega \times (0,t)} +
    \nm{e}_{L^2(0,t;\dot H^1(\Omega))}^2 =
    \dual{f(u) - f(\widetilde u), e}_{\Omega \times (0,t)}.
  \end{align*}
  From the inequality
  \begin{align*}
    \dual{f(u) - f(\widetilde u), e}_{\Omega \times (0,t)}
    & \leqslant L\nm{e}_{L^2(0,t;L^2(\Omega))}^2,
  \end{align*}
  it follows that
  \[
    \dual{\D_{0+}^\alpha e, e}_{\Omega \times (0,t)}
    \leqslant L\nm{e}_{L^2(0,t;L^2(\Omega))}^2,
  \]
  so that \cref{lem:coer} implies
  \begin{equation}
    \label{eq:e}
    \nm{\D_{0+}^{\alpha/2} e}_{L^2(0,t;L^2(\Omega))}
    \leqslant C_{\alpha,L} \nm{e}_{L^2(0,t;L^2(\Omega))}.
  \end{equation}
  Using \cref{lem:gronwall} then yields $ e = 0 $, which contradicts the assumption
  that $ u \neq \widetilde u $. This proves that the weak solution $ u $ to problem
  \cref{eq:model} is unique.


  Now let us prove \cref{eq:73,eq:regu-14,eq:regu-11} under the condition that $ 1/2
  < \alpha < 1 $. By \cref{eq:regu-C} we have
  \[
    \nm{f(u)}_{L^2(0,T;L^2(\Omega))} \leqslant C_{\alpha,L,T}
    \nm{u_0}_{L^2(\Omega)},
  \]
  so that using \cref{eq:regu-S,eq:weak_sol} gives
  \[
    \nm{u-S\D_{0+}^\alpha u_0}_{{}_0H^\alpha(0,T;L^2(\Omega))} =
    \nm{Sf(u)}_{{}_0H^\alpha(0,T;L^2(\Omega))}
    \leqslant C_{\alpha,L,T} \nm{u_0}_{L^2(\Omega)}.
  \]
  From \cref{eq:regu-u0-11} it follows that
  \[
    \nm{u}_{{}_0H^{\alpha/2}(0,T;L^2(\Omega))} \leqslant
    C_{\alpha,L,T,\Omega} \nm{u_0}_{L^2(\Omega)}.
  \]
  Hence, applying \cite[Lemma 28.1]{Tartar2007} gives
  \[
    \nm{f(u)}_{{}_0H^{\alpha/2}(0,T;L^2(\Omega))} \leqslant
    C_{\alpha,L,T,\Omega} \nm{u_0}_{L^2(\Omega)},
  \]
  and then using \cref{eq:regu-S,eq:weak_sol} again yields
  \begin{align*}
    & \nm{u-S\D_{0+}^\alpha u_0}_{{}_0H^{3\alpha/2}(0,T;L^2(\Omega))} +
    \nm{u-S\D_{0+}^\alpha u_0}_{{}_0H^{\alpha/2}(0,T;\dot H^2(\Omega))} \\
    ={} &
    \nm{Sf(u)}_{{}_0H^{3\alpha/2}(0,T;L^2(\Omega))} +
    \nm{Sf(u)}_{{}_0H^{\alpha/2}(0,T;\dot H^2(\Omega))} \\
    \leqslant{} &
    C_{\alpha,L,T,\Omega} \nm{u_0}_{L^2(\Omega)}.
  \end{align*}
  Therefore, \cref{eq:73,eq:regu-14,eq:regu-11} follow from
  \cref{eq:regu-u0-3,eq:regu-u0-6,eq:regu-u0-11}, respectively.

  Finally, since the rest of this theorem can be proved analogously, this completes
  the proof.
\hfill\ensuremath{\blacksquare}

\medskip\noindent{\bf Proof of \cref{thm:regu-higher}.} Firstly, by
  \cref{lem:g,lem:w,lem:regu-u0-growth}, it is easily verified that the sequence $
  \{v_k\}_{k=1}^\infty $ defined in the proof of \cref{thm:regu} has the following
  property: for each $ k \in \mathbb N_{>0} $, $ v_k \in C^1((0,T];L^2(\Omega)) $ and
  \begin{equation}
    \label{eq:v_k'}
    v_k'(t) = g(t) +
    \int_0^t E(t-s) f'(v_{k-1}(s)) v_{k-1}'(s) \, \mathrm{d}s,
    \quad 0 < t \leqslant T,
  \end{equation}
  where $ g(t) $ is defined by \cref{eq:g-def}. A routine calculation then gives, by
  \cref{lem:E}, that, for any $ k \in \mathbb N_{>0} $ and $ 0 < t \leqslant T $,
  \begin{equation}
    \label{eq:426}
    t^{1-\alpha\delta} \nm{v_k'(t)}_{L^2(\Omega)} \leqslant
    A \Gamma(\alpha\delta)
    \sum_{j=0}^k \frac{(LC_\alpha t^\alpha \Gamma(\alpha))^j}
    {\Gamma(\alpha(j+\delta))},
  \end{equation}
  where
  \[
    A := \sup_{0 < t \leqslant T} t^{1-\alpha \delta}
    \nm{g(t)}_{L^2(\Omega)}.
  \]
  Since
  \[
    \sum_{j=0}^\infty \frac{
      (LC_\alpha T^\alpha \Gamma(\alpha))^j
    }{\Gamma(\alpha(j+\delta))}
    \leqslant C_{\alpha,L,T}
    \text{ and }
    \Gamma(\alpha\delta) \leqslant C_\alpha \delta^{-1},
  \]
  it follows that
  \[
    \sup_{0 < t \leqslant T} t^{1-\alpha\delta}
    \nm{v_k'(t)}_{L^2(\Omega)} \leqslant
    C_{\alpha,L,T} \delta^{-1} A \quad \forall k \in \mathbb N_{>0}.
  \]
  Therefore, \cref{lem:g} implies
  \begin{equation}
    \label{eq:v_k'-growth}
    \sup_{0 < t \leqslant T} t^{1-\alpha\delta}
    \nm{v_k'(t)}_{L^2(\Omega)}
    \leqslant C_{\alpha,L,T,\Omega} \delta^{-1}
    \nm{u_0}_{\dot H^{2\delta}(\Omega)} \quad \forall k \in \mathbb N_{>0}.
  \end{equation}

  Secondly, let us prove \cref{eq:lsj-1}. For any $ k \in \mathbb N_{>0} $ and $ a
  \leqslant t_1 < t_2 \leqslant T $, by \cref{eq:v_k'} we have
  \begin{align*}
    \big( v_k'-g \big)(t_2) - \big( v_k' - g \big)(t_1) =  \mathbb I_1 +
    \mathbb I_2 + \mathbb I_3,
  \end{align*}
  where
  \begin{align*}
    \mathbb I_1 &:=  \int_0^{a/2} \big( E(t_2-s) - E(t_1-s) \big)
    f'(v_{k-1}(s)) v_{k-1}'(s) \, \mathrm{d}s, \\
    \mathbb I_2 &:=  \int_{a/2}^{t_1} \big( E(t_2-s) - E(t_1-s) \big)
    f'(v_{k-1}(s)) v_{k-1}'(s) \, \mathrm{d}s, \\
    \mathbb I_3 &:= \int_{t_1}^{t_2} E(t_2-s)
    f'(v_{k-1}(s)) v_{k-1}'(s) \, \mathrm{d}s.
  \end{align*}
  By \cref{lem:E,eq:v_k'-growth}, a straightforward computation gives
  \begin{align*}
    \nm{\mathbb I_1}_{L^2(\Omega)} & \leqslant
    C_{a,\alpha,\delta,L,T,\Omega} (t_2-t_1) \nm{u_0}_{\dot H^{2\delta}(\Omega)}, \\
    \nm{\mathbb I_2}_{L^2(\Omega)} +
    \nm{\mathbb I_3}_{L^2(\Omega)} & \leqslant
    C_{a,\alpha,\delta,L,T,\Omega} (t_2-t_1)^\alpha
    \nm{u_0}_{\dot H^{2\delta}(\Omega)},
  \end{align*}
  so that $ \{v_k'-g\}_{k=1}^\infty $ is equicontinuous in $ C([a,T];L^2(\Omega)) $.
  In addition, \cref{eq:v_k'-growth,lem:g} imply that $ \{v_k'-g\}_{k=1}^\infty $ is
  uniformly bounded in $ C([a,T];L^2(\Omega)) $, and the proof of \cref{thm:regu}
  implies that
  \[
    \lim_{k \to \infty} v_k = u \quad\text{ in }
    C([0,T];L^2(\Omega)).
  \]
  Therefore, the celebrated Arzel\'a-Ascoli theorem yields
  \begin{equation}
    \label{eq:v_k-to-u-C1}
    \lim_{k \to \infty} v_k = u \quad \text{ in } C^1([a,T];L^2(\Omega)),
  \end{equation}
  so that passing to the limit $ k \to \infty $ in \cref{eq:v_k'-growth} gives
  \[
    \sup_{a \leqslant t \leqslant T} t^{1-\alpha\delta}
    \nm{u'(t)}_{L^2(\Omega)} \leqslant C_{\alpha,L,T,\Omega} \delta^{-1}
    \nm{u_0}_{\dot H^{2\delta}(\Omega)}.
  \]
  Since $ 0 < a < T $ is arbitrary, this proves \cref{eq:lsj-1}.

  Thirdly, let us prove \cref{eq:lsj-2}. By \cref{eq:weak_sol}, \cref{eq:lsj-1},
  \cref{eq:S-E} and \cref{lem:E}, a straightforward computation yields
  \[
    u'(t) = g(t) + \int_0^t E(t-s) f'(u(s)) u'(s) \, \mathrm{d}s,
    \quad 0 < t \leqslant T.
  \]
  From \cref{eq:lsj-1,eq:E} it follows that
  \begin{align*}
    \nm{u'(t) - g(t)}_{\dot H^1(\Omega)} \leqslant
    C_{\alpha,L,T,\Omega} \delta^{-2} t^{\alpha(1/2+\delta) - 1}
    \nm{u_0}_{\dot H^{2\delta}(\Omega)},
    \quad 0 < t \leqslant T.
  \end{align*}
  In addition, \cref{eq:2delta-h1} implies that
  \begin{align*}
    \nm{E(t)f(u_0)}_{\dot H^1(\Omega)} & \leqslant
    C_\alpha t^{\alpha(1/2+\delta)-1} \nm{f(u_0)}_{\dot H^{2\delta}(\Omega)} \\
    & \leqslant
    C_{\alpha,L} t^{\alpha(1/2+\delta)-1}
    \nm{u_0}_{\dot H^{2\delta}(\Omega)}.
  \end{align*}
  Therefore, \cref{eq:lsj-2} follows from the above two estimates.

  Finally, combining \cref{eq:lsj-2,eq:u0-h1-growth} proves \cref{eq:u'-h1} and thus
  concludes the proof of this theorem.
\hfill\ensuremath{\blacksquare}

\section{Spatial semi-discretization}
\label{sec:semi}
Let $ \mathcal K_h $ be a conventional conforming and shape regular triangulation of
$ \Omega $ consisting of $ d $-simplexes, and we use $ h $ to denote the maximum
diameter of these elements in $ \mathcal K_h $. Define
\[
  \mathcal V_h := \{
    v_h \in H_0^1(\Omega):\ v_h|_K \text{ is linear for each } K \in \mathcal K_h
  \}.
\]
Let $ P_h $ be the $ L^2(\Omega) $-orthogonal projection operator onto $ \mathcal V_h
$, and define the discrete Laplace operator $ \Delta_h: \mathcal V_h \to \mathcal V_h
$ by that
\[
  \dual{-\Delta_h v_h, w_h}_\Omega = \dual{\nabla v_h, \nabla w_h}
\]
for all $ v_h, w_h \in \mathcal V_h $. We use $ L_h^2(\Omega) $ to denote the space $
\mathcal V_h $ endowed with the norm of $ L^2(\Omega) $, and, for any $ \beta > 0 $,
we use $ \dot H_h^\beta(\Omega) $ to denote the space $ \mathcal V_h $ endowed with
the norm of $ \dot H^\beta(\Omega) $.

This section considers the following spatial semi-discretization:
\begin{equation}
  \label{eq:u_h}
  \D_{0+}^\alpha(u_h - P_h u_0)(t) - \Delta_h u_h(t) = P_h f(u_h(t)),
  \quad 0 < t \leqslant T.
\end{equation}
Let $ S_h $ be the spatially discrete version of $ S $, and we call $ u_h \in
{}_0H^{\alpha/2}(0,T;L_h^2(\Omega)) $ a weak solution to problem \cref{eq:u_h} if
\[
  u_h = S_h \D_{0+}^\alpha P_h u_0 + S_h P_h f(u_h).
\]
Following the proof of \cref{thm:regu,thm:regu-higher}, we can easily prove that
problem \cref{eq:u_h} possesses a unique weak solution $ u_h $ and
\begin{equation}
  \label{eq:uh'-h1}
  \sup_{0 < t \leqslant T} \big(
    t \nm{u'_h}_{\dot H^1(\Omega)} +
    t^{1-\alpha/2} \nm{u'_h}_{L^2(\Omega)}
  \big) \leqslant
  C_{\alpha,L,T,\Omega} \nm{P_h u_0}_{\dot H^1(\Omega)}.
\end{equation}

\begin{thm}
  \label{thm:space}
  For each $ 0 < t \leqslant T $,
  \begin{equation}
    \label{eq:space}
    \nm{(u-u_h)(t)}_{L^2(\Omega)} \lesssim
    h^2(t^{-\alpha} + \ln(1/h)) \nm{u_0}_{L^2(\Omega)}.
  \end{equation}
  Moreover,
  \begin{small}
  \begin{equation}
    \label{eq:u-u_h-frac}
    \nm{u-u_h}_{{}_0H^{\alpha/2}(0,T;L^2(\Omega))} \lesssim
    \nm{u_0}_{L^2(\Omega)}
    \begin{cases}
      h^2 & \text{ if } 0 < \alpha < 1/3, \\
      \sqrt{\ln(1/h)} h^2 & \text{ if } \alpha = 1/3, \\
      h^{1/\alpha-1} & \text{ if } 1/3 < \alpha < 1,
    \end{cases}
  \end{equation}
  \end{small}
  and
  \begin{equation}
    \label{eq:u-u_h-energy}
    \begin{aligned}
      & \nm{u-u_h}_{L^2(0,T;L^2(\Omega))} +
      h \nm{u-u_h}_{L^2(0,T;\dot H^1(\Omega))} \\
      \lesssim{} &
      \nm{u_0}_{L^2(\Omega)}
      \begin{cases}
        h^2 & \text{ if }\, 0 < \alpha < 1/2, \\
        \sqrt{\ln(1/h)} \, h^2 & \text{ if }\, \alpha = 1/2, \\
        h^{1/\alpha} & \text{ if }\, 1/2 < \alpha < 1.
      \end{cases}
    \end{aligned}
  \end{equation}
\end{thm}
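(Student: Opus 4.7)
The plan is to exploit the integral representations
\[
u = S\D_{0+}^\alpha u_0 + Sf(u), \qquad u_h = S_h \D_{0+}^\alpha P_h u_0 + S_h P_h f(u_h),
\]
so that the error decomposes as $u-u_h = e_{\mathrm{lin}} + e_{\mathrm{nl}}$, with $e_{\mathrm{lin}} := S\D_{0+}^\alpha u_0 - S_h\D_{0+}^\alpha P_h u_0$ and $e_{\mathrm{nl}} := Sf(u) - S_h P_h f(u_h)$. The linear error $e_{\mathrm{lin}}$ is the finite element error for the linear fractional diffusion equation with nonsmooth data $u_0\in L^2(\Omega)$, whose sharp bounds in each of the three norms of interest are available from the prior literature and match exactly the regularity of $u$ established in \cref{thm:regu}. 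I then split
\[
e_{\mathrm{nl}} = (S - S_h P_h) f(u) + S_h P_h \bigl(f(u) - f(u_h)\bigr),
\]
so that the first piece is again a linear error, now with source $f(u)$, whose regularity is inherited from that of $u$ via the Lipschitz hypothesis on $f$, while the second piece is handled by a Gr\"onwall step.

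For the two Bochner-norm estimates \cref{eq:u-u_h-frac} and \cref{eq:u-u_h-energy}, I test the variational equation for $e:=u-u_h$ against $e$ itself and invoke the fractional coercivity in \cref{lem:coer}. The $e_{\mathrm{lin}}$ and $(S-S_hP_h)f(u)$ contributions supply the inhomogeneous bound with the orders $h^2$, $\sqrt{\ln(1/h)}\,h^2$, or $h^{1/\alpha-1}$ (respectively $h^{1/\alpha}$), reflecting the spatial-regularity trichotomy of \cref{thm:regu} at the thresholds $\alpha=1/3$ and $\alpha=1/2$. The Lipschitz term produces a quadratic perturbation of the form $L\nm{e}_{L^2(0,t;L^2(\Omega))}^2$, and closing the estimate is precisely the situation for which \cref{lem:gronwall} was designed; applying it with $\beta=\alpha/2$ yields the ${}_0H^{\alpha/2}$-in-time bound, and the $L^2(0,T;L^2(\Omega))$ and $L^2(0,T;\dot H^1(\Omega))$ bounds then follow from the embedding ${}_0H^{\alpha/2}(0,T;L^2(\Omega))\hookrightarrow L^2(0,T;L^2(\Omega))$ combined with the variational identity.

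For the pointwise estimate \cref{eq:space} I use the representation $Sg(t)=\int_0^t E(t-s)g(s)\,\mathrm{d}s$ and its discrete counterpart with $E_h$. The linear error obeys the classical nonsmooth-data bound $\nm{e_{\mathrm{lin}}(t)}_{L^2(\Omega)}\lesssim h^2 t^{-\alpha}\nm{u_0}_{L^2(\Omega)}$. For the nonlinear error I write
\[
e_{\mathrm{nl}}(t) = \int_0^t \bigl(E(t-s) - E_h(t-s)P_h\bigr)f(u(s))\,\mathrm{d}s + \int_0^t E_h(t-s)P_h\bigl(f(u(s)) - f(u_h(s))\bigr)\,\mathrm{d}s,
\]
then use the smoothing-plus-error estimate $\nm{E(\tau)-E_h(\tau)P_h}_{\mathcal L(L^2(\Omega))}\lesssim \min(1,h^2\tau^{-1})$ together with $f(u)\in C([0,T];L^2(\Omega))$ from \cref{thm:regu} to obtain an $h^2\ln(1/h)\nm{u_0}_{L^2(\Omega)}$ contribution, the logarithm coming from $\int_0^t\min(1,h^2/(t-s))\,\mathrm{d}s$. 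Combining $\nm{E_h(\tau)P_h}_{\mathcal L(L^2(\Omega))}\lesssim \tau^{\alpha-1}$ with the Lipschitz hypothesis reduces the estimate to the weakly singular inequality
\[
\nm{(u-u_h)(t)}_{L^2(\Omega)}\lesssim h^2\bigl(t^{-\alpha}+\ln(1/h)\bigr)\nm{u_0}_{L^2(\Omega)} + C\int_0^t(t-s)^{\alpha-1}\nm{(u-u_h)(s)}_{L^2(\Omega)}\,\mathrm{d}s,
\]
which a Henry-type Gr\"onwall lemma closes.

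The principal obstacle is obtaining the single logarithm $\ln(1/h)$ rather than the $(\ln(1/h))^2$ factor appearing in the $\dot H^2$-data analysis of Jin et al.; this relies on using the sharp nonsmooth-data time regularity of $u$ furnished by \cref{thm:regu} to control $f(u)$ uniformly in $C([0,T];L^2(\Omega))$. A secondary technical point is that the case distinctions at $\alpha=1/3$ and $\alpha=1/2$ in the Bochner-norm estimates must be handled by interpolating the linear error bounds between $\dot H^1$- and $\dot H^2$-regularity, matching exactly the thresholds dictated by \cref{thm:regu}.
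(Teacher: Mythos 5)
Your proposal follows essentially the same route as the paper: your splitting $e_{\mathrm{lin}} + (S-S_hP_h)f(u)$ is exactly the paper's intermediate error $u-\widetilde u_h$ with $\widetilde u_h := S_h\big(\D_{0+}^\alpha P_hu_0 + P_hf(u)\big)$, bounded via the Lubich-type nonsmooth-data estimates of \cref{lem:Lubich} together with the regularity from \cref{thm:regu}, while the remaining piece $S_hP_h\big(f(u)-f(u_h)\big)=\widetilde u_h-u_h$ is closed by a Henry-type Gr\"onwall inequality for the pointwise bound and by \cref{lem:gronwall} for the Bochner-norm bounds, just as in the paper. Two small precisions: the coercivity test cannot literally be applied to $e=u-u_h$ (it is not an admissible test function for the discrete equation) but must be applied to the discrete component $\theta_h=u_h-\widetilde u_h$, which your decomposition already isolates; and the $L^2(0,T;L^2(\Omega))$ rate for $u-\widetilde u_h$ comes from a duality argument rather than from the embedding of ${}_0H^{\alpha/2}(0,T;L^2(\Omega))$ alone, since the latter would give only $h^{1/\alpha-1}$ instead of $h^2$ when $1/3<\alpha<1/2$.
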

\noindent Above and throughout, $ h $ is assumed to be less than $ 1/2 $, and $ a
\lesssim b $ means that there exists a positive constant, depending only on $ \alpha
$, $ L $, $ T $, $ \Omega $ or the shape regularity of $ \mathcal K_h $, unless
otherwise specified, such that $ a \leqslant Cb $.

\begin{rem}
  Under the condition that $ u_0 \in \dot H^2(\Omega) $, a simple modification of the
  proof of \cref{eq:space} yields
  \[
    \sup_{0 \leqslant t \leqslant T}
    \nm{(u-u_h)(t)}_{L^2(\Omega)} \lesssim
  h^2  \ln(1/h)  \nm{u_0}_{\dot H^2(\Omega)}.
  \]
  We also note that Jin et~al.~\cite{Jin2018} derived that
  \[
    \sup_{0 \leqslant t \leqslant T}
    \nm{(u-u_h)(t)}_{L^2(\Omega)} \lesssim
    h^2  (\ln(1/h))^2 \nm{u_0}_{\dot H^2(\Omega)}.
  \]
\end{rem}

The rest of this section is devoted to proving the above theorem. Similar to
\cref{eq:S-E},
for any $ g_h \in L^p(0,T;L_h^2(\Omega)) $ with $ p > 1/\alpha $, we have
\begin{equation}
  \label{eq:S_h-E_h}
  S_hg_h(t) = \int_0^t E_h(t-s) g_h(s) \, \mathrm{d}s,
  \quad 0 \leqslant t \leqslant T,
\end{equation}
where
\begin{equation}
  \label{eq:E_h-def}
  E_h(t) := \frac1{2\pi i} \int_0^\infty e^{-rt}\big(
    (r^\alpha e^{-i\alpha\pi} - \Delta_h)^{-1} -
    (r^\alpha e^{i\alpha\pi} - \Delta_h)^{-1}
  \big) \, \mathrm{d}r.
\end{equation}
A trivial modification of the proof of \cref{eq:E} yields
\begin{equation}
  \label{eq:E_h}
  \nm{E_h(t)}_{\mathcal L(L^2(\Omega))}
  \leqslant C_\alpha t^{\alpha-1},
  \quad t > 0.
\end{equation}


\begin{lem}
  \label{lem:Lubich}
  If $ v \in L^2(\Omega) $, then
  \begin{equation}
    \label{eq:err_u0}
    \nm{
      \big( S\D_{0+}^\alpha v - S_h\D_{0+}^\alpha P_h v \big)(t)
    }_{L^2(\Omega)} \lesssim h^2 t^{-\alpha} \nm{v}_{L^2(\Omega)}
  \end{equation}
  for all $ 0 < t \leqslant T $. If $ g \in L^\infty(0,T;L^2(\Omega)) $, then
  \begin{equation}
    \label{eq:err_g}
    \nm{(Sg - S_h P_h g)(t)}_{L^2(\Omega)} \lesssim
    h^2 \ln(1/h) \nm{g}_{L^\infty(0,t;L^2(\Omega))}
  \end{equation}
  for all $ 0 < t \leqslant T $.
\end{lem}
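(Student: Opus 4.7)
My plan is to attack both estimates through the Laplace-transform/contour-integral representation of $S$, $S_h$ and the associated operators, reducing everything to a uniform finite element error bound for the elliptic resolvent on a suitable sector of the complex plane. The workhorse I would first establish is: for $z\in\Sigma_\theta:=\{z\in\mathbb C\setminus\{0\}: |\arg z|<\theta\}$ with some $\theta\in(\alpha\pi,\pi)$,
\[
  \nm{(z-\Delta)^{-1}v-(z-\Delta_h)^{-1}P_h v}_{L^2(\Omega)}
  \lesssim h^2\ell_h\,\nm{v}_{L^2(\Omega)},\qquad \ell_h:=\ln(1/h),
\]
proved by writing the error as the difference of a Galerkin solution and its elliptic continuous counterpart and applying the Aubin--Nitsche duality argument; the logarithmic factor arises in the standard way from an $L^\infty$-type bound on the Green's function of the shifted Laplacian on the convex polytope $\Omega$.

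For \cref{eq:err_u0}, I would observe that $w(t):=(S\D_{0+}^\alpha v)(t)$ satisfies $\D_{0+}^\alpha(w-v)-\Delta w=0$, so taking the Laplace transform yields $\widehat w(z)=z^{\alpha-1}(z^\alpha-\Delta)^{-1}v$, and hence
\[
  w(t)=\frac{1}{2\pi i}\int_\Gamma e^{zt}z^{\alpha-1}(z^\alpha-\Delta)^{-1}v\,dz
\]
on a Hankel contour $\Gamma$ enclosing the spectrum; an identical formula holds for $w_h(t):=(S_h\D_{0+}^\alpha P_h v)(t)$ with $\Delta_h$ and $P_hv$. Subtracting, applying the resolvent bound with $z^\alpha$ in place of $z$, and deforming $\Gamma$ to scale $|z|\sim 1/t$ gives
\[
  \nm{w(t)-w_h(t)}_{L^2(\Omega)}\lesssim h^2\ell_h\int_\Gamma|e^{zt}||z|^{\alpha-1}|dz|\,\nm{v}_{L^2(\Omega)}\lesssim h^2\ell_h\,t^{-\alpha}\nm{v}_{L^2(\Omega)},
\]
and the $\ell_h$ factor is absorbed into the $t^{-\alpha}$ term using standard arguments (as in Jin et al.~\cite{Jin2018}, or simply by noting $\ell_h\lesssim t^{-\alpha}$ is not automatic, so one would actually use a sharper resolvent bound whose $z$-dependence kills the $\ell_h$ for $|z|$ large, which is the standard nonsmooth-data trick).

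For \cref{eq:err_g}, I would use \cref{eq:S-E,eq:S_h-E_h} to write
\[
  (Sg-S_hP_hg)(t)=\int_0^t\bigl(E(t-s)g(s)-E_h(t-s)P_hg(s)\bigr)\,ds,
\]
so it suffices to integrate an operator norm. Using the contour representations \cref{eq:E-def,eq:E_h-def} together with the resolvent bound above (applied at $z=r^\alpha e^{\pm i\alpha\pi}$) I expect to obtain
\[
  \nm{E(t)-E_h(t)P_h}_{\mathcal L(L^2(\Omega))}\lesssim h^2\min\bigl(t^{-1},\,t^{\alpha-1}\bigr),
\]
and then integration against a fixed $\nm{g}_{L^\infty(0,t;L^2(\Omega))}$ yields
\[
  \int_0^t h^2\min(s^{-1},s^{\alpha-1})\,ds\lesssim h^2\bigl(1+\ln(t/h^{1/\alpha})\bigr)\lesssim h^2\ln(1/h)
\]
for $t\le T$, which is exactly \cref{eq:err_g}.

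The main obstacle will be securing the sharp resolvent estimate uniformly in $z\in\Sigma_\theta$ with the correct logarithmic dependence on $h$: the duality argument requires careful $W^{2,p}$-type regularity for the dual problem on the convex polytope and stability of the Ritz/$L^2$ projections in $L^\infty$, and the tracking of constants as $|z|\to\infty$ must be precise enough that the subsequent contour integration yields the weakly singular temporal factors $t^{-\alpha}$ and $\min(t^{-1},t^{\alpha-1})$ without spurious powers of $\ell_h$ beyond the single factor appearing in \cref{eq:err_g}.
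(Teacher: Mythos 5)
Your overall route is the same as the paper's (and as the proof of \cite[Theorem 2.1]{Lubich1996} which the paper invokes): reduce everything to an error estimate for the elliptic resolvent at $z=r^\alpha e^{\pm i\alpha\pi}$ and push it through the contour/Laplace representations \cref{eq:E-def,eq:E_h-def}. The genuine problem is the form of your workhorse estimate. On a convex polytope the $L^2$ resolvent error bound obtained by Galerkin orthogonality plus Aubin--Nitsche duality is
\begin{equation*}
  \nm{(z-\Delta)^{-1}v-(z-\Delta_h)^{-1}P_h v}_{L^2(\Omega)}\leqslant C h^2 \nm{v}_{L^2(\Omega)}
  \quad\text{uniformly in the sector,}
\end{equation*}
with \emph{no} logarithmic factor; the $\ln(1/h)$ you build in belongs to max-norm/Green's function estimates and has no business in the $L^2$ theory. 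This matters because with your bound the Hankel-contour argument for \cref{eq:err_u0} produces $h^2\ln(1/h)\,t^{-\alpha}$, not the asserted $h^2 t^{-\alpha}$, and your proposed repair (``a sharper resolvent bound whose $z$-dependence kills the $\ln(1/h)$ for $|z|$ large'') is not an argument: there is no extra $z$-decay in the resolvent error to exploit, and none is needed --- the fix is simply the log-free uniform $Ch^2$ bound above, which is exactly what the paper quotes from \cite{Lubich1996}.

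For \cref{eq:err_g} your intermediate claim $\nm{E(t)-E_h(t)P_h}_{\mathcal L(L^2(\Omega))}\lesssim h^2\min(t^{-1},t^{\alpha-1})$ is unsupported: the branch $h^2t^{\alpha-1}$ would require a resolvent error decaying like $h^2|z|^{-1}$, which neither your stated bound nor the standard one provides; and if it were true it would yield $O(h^2)$ with no logarithm at all, contradicting your own final integration. What the paper actually does --- and what your last display tacitly implements --- is to combine the uniform $Ch^2$ error bound with the trivial bound $C_{\alpha,\Omega}(1+r^\alpha)^{-1}$ for each resolvent separately, giving
\begin{equation*}
  \nm{E(s)-E_h(s)P_h}_{\mathcal L(L^2(\Omega))}\lesssim \min\{h^2 s^{-1},\,1+s^{\alpha-1}\},
\end{equation*}
and then $\int_0^t\min\{h^2 s^{-1},\,1+s^{\alpha-1}\}\,\mathrm{d}s\lesssim h^2\ln(1/h)$ by splitting at $s\sim h^{2/\alpha}$. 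So the conclusion of \cref{eq:err_g} is reachable along your lines, but you need to replace both the log-laden resolvent estimate and the miswritten kernel bound by these correct statements; as written, \cref{eq:err_u0} does not follow from your argument.
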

\begin{proof}
  The proof of \cref{eq:err_u0} is similar to that of \cite[Theorem 2.1]{Lubich1996}
  and so is omitted. For any $ r \geqslant 0 $ and $ \theta = \pm \pi $, the proof of
  \cite[Theorem 2.1]{Lubich1996} contains that
	\[
		\nm{
			(r^\alpha e^{i\alpha\theta} -\Delta)^{-1} -
			(r^\alpha e^{i\alpha\theta} -\Delta_h)^{-1}P_h
		}_{\mathcal L(L^2(\Omega))}
		\lesssim h^2,
	\]
	and it is evident that
	\[
		\nm{
			(r^\alpha e^{i\alpha\theta} -\Delta)^{-1} -
			(r^\alpha e^{i\alpha\theta} -\Delta_h)^{-1}P_h
		}_{\mathcal L(L^2(\Omega))}
		\lesssim (1+r^\alpha)^{-1}.
	\]
	Consequently, by \cref{eq:E-def,eq:E_h-def}, a straightforward computation gives
	\[
		\nm{E(s) - E_h(s)P_h}_{\mathcal L(L^2(\Omega))}
		\lesssim \min\{s^{-1} h^2, 1+s^{\alpha-1}\}
	\]
	for all $ s > 0 $. Therefore, by \cref{eq:E,eq:S_h-E_h} we obtain
	\[
		\nm{
			(Sg - S_h P_h g)(t)
		}_{L^2(\Omega)}  \lesssim
		\int_0^t \min\{s^{-1} h^2, 1+s^{\alpha-1} \} \, \mathrm{d}s
		\nm{g}_{L^\infty(0,T;L^2(\Omega))},
	\]
	so that the simple estimate
	\[
		\int_0^t \min\{s^{-1} h^2, 1+s^{\alpha-1} \} \, \mathrm{d}s
		\lesssim h^2 \ln(1/h)
	\]
  proves this lemma.
\end{proof}
\begin{rem}
  We note that \cite[Theorem 3.7]{Jin2013SIAM} implies
  \[
    \nm{
      \big(S\D_{0+}^\alpha v - S_h\D_{0+}^\alpha P_h v\big)(t)
    }_{L^2(\Omega)} \lesssim h^2 \ln(1/h) t^{-\alpha} \nm{v}_{L^2(\Omega)},
    \,\, 0 < t \leqslant T,
  \]
  for all $ v \in L^2(\Omega) $. We also notice that \cite[Theorem
  3.7]{Jin2015IMA} implies
  \[
    \nm{(Sg - S_h P_h g)(t)}_{L^2(\Omega)} \lesssim
    h^2 \ln(1/h)^2 \nm{g}_{L^\infty(0,t;L^2(\Omega))},
    \quad 0 \leqslant t \leqslant T,
  \]
  for all $ g \in L^\infty(0,T;L^2(\Omega)) $.
\end{rem}



Let
\begin{equation}
	\label{eq:wt-u_h}
	\widetilde u_h := S_h\big( \D_{0+}^\alpha P_hu_0 + P_h f(u) \big).
\end{equation}
\begin{lem}
  \label{lem:u-wt-u_h}
  For any $ 0 < t \leqslant T $,
  \begin{equation}
    \label{eq:u-wt-u_h-1}
    \nm{(u-\widetilde u_h)(t)}_{L^2(\Omega)} \lesssim
    h^2( t^{-\alpha} + \ln(1/h) ) \nm{u_0}_{L^2(\Omega)}.
  \end{equation}
  Moreover,
  \begin{small}
  \begin{equation}
    \label{eq:u-wt-u_h-2}
    \nm{u-\widetilde u_h}_{{}_0H^{\alpha/2}(0,T;L^2(\Omega))} \lesssim
    \nm{u_0}_{L^2(\Omega)} \begin{cases}
    h^2 & \text{ if } 0 < \alpha < 1/3, \\
    \sqrt{\ln(1/h)} h^2 & \text{ if } \alpha = 1/3, \\
    h^{1/\alpha-1} & \text{ if } 1/3 < \alpha < 1,
  \end{cases}
\end{equation}
\end{small}
  and
  \begin{small}
  \begin{equation}
    \label{eq:u-wt-u_h-3}
    \begin{aligned}
      & \nm{u-\widetilde u_h}_{L^2(0,T;L^2(\Omega))} +
      h \nm{u-\widetilde u_h}_{L^2(0,T;\dot H^1(\Omega))} \\
      \lesssim{} &
      \nm{u_0}_{L^2(\Omega)}
      \begin{cases}
        h^2 & \text{ if }\, 0 < \alpha < 1/2, \\
        \sqrt{\ln(1/h)} \, h^2 & \text{ if }\, \alpha = 1/2, \\
        h^{1/\alpha} & \text{ if }\, 1/2 < \alpha < 1.
      \end{cases}
    \end{aligned}
  \end{equation}
	\end{small}
\end{lem}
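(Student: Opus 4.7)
The plan is to decompose
\[
u - \widetilde u_h = \bigl( S\D_{0+}^\alpha u_0 - S_h \D_{0+}^\alpha P_h u_0 \bigr) + \bigl( Sf(u) - S_h P_h f(u) \bigr) =: \eta_1 + \eta_2,
\]
which is immediate from $u = S\D_{0+}^\alpha u_0 + Sf(u)$ and the definition \cref{eq:wt-u_h}, and to estimate each $\eta_i$ separately by combining spatial finite element error bounds for the linear semidiscrete fractional problem with the regularity of $u$ given by \cref{thm:regu}. The Lipschitz condition together with $f(0)=0$ lets any space-time norm bound on $u$ be transferred to the same bound on $f(u)$ with constant $L$.

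For \cref{eq:u-wt-u_h-1}, \cref{eq:err_u0} applied to $\eta_1$ gives $\nm{\eta_1(t)}_{L^2(\Omega)} \lesssim h^2 t^{-\alpha} \nm{u_0}_{L^2(\Omega)}$, while \cref{eq:err_g} applied to $\eta_2$ gives $\nm{\eta_2(t)}_{L^2(\Omega)} \lesssim h^2 \ln(1/h) \nm{f(u)}_{L^\infty(0,t;L^2(\Omega))}$. The latter norm is controlled by $L \nm{u}_{C([0,T];L^2(\Omega))} \lesssim \nm{u_0}_{L^2(\Omega)}$ through \cref{eq:regu-C}, so summing the two pieces proves \cref{eq:u-wt-u_h-1}.

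For \cref{eq:u-wt-u_h-2,eq:u-wt-u_h-3}, I mimic the proof of \cref{lem:Lubich}: express $\eta_1$ and $\eta_2$ through the contour representations \cref{eq:E-def,eq:E_h-def}, and estimate the operator difference $(r^\alpha e^{\pm i\alpha\pi} - \Delta)^{-1} - (r^\alpha e^{\pm i\alpha\pi} - \Delta_h)^{-1} P_h$ using the standard elliptic finite element bound that delivers rate $h^\beta$ whenever the underlying solution lies in $\dot H^\beta(\Omega)$ with $0 \leqslant \beta \leqslant 2$. Integrating in time and the contour parameter $r$, one obtains a spatial error of order $h^\beta$ in $\nm{\cdot}_{L^2(0,T;L^2(\Omega))}$ whenever $u \in L^2(0,T;\dot H^\beta(\Omega))$, and an analogous estimate in $\nm{\cdot}_{{}_0H^{\alpha/2}(0,T;L^2(\Omega))}$ whenever $u \in {}_0H^{\alpha/2}(0,T;\dot H^\beta(\Omega))$; similarly for the $\dot H^1$-in-space norm, where the rate degrades by one power of $h$. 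Plugging in the three regimes from \cref{thm:regu} --- $\beta = 2$ below the threshold, $\beta = 2-\epsilon$ at it with $\epsilon \sim 1/\ln(1/h)$ producing the $\sqrt{\ln(1/h)}$ factor, and $\beta = 1/\alpha - 1$ or $1/\alpha$ above it --- yields the claimed rates.

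The main obstacle is the fractional-in-time estimate, since $\nm{\cdot}_{{}_0H^{\alpha/2}(0,T;L^2(\Omega))}$ does not factor directly through the pointwise-in-time arguments used for \cref{lem:Lubich}. The remedy is to convert this norm to an $L^2$-in-time norm of $\D_{0+}^{\alpha/2}\eta_i$ via \cref{lem:coer} and \cref{lem:regu-basic} and to exploit that $\D_{0+}^{\alpha/2}$ commutes with the spatial resolvents, so the contour integration and the fractional time derivative decouple. Once this is set up, the threshold cases $\alpha = 1/3$ and $\alpha = 1/2$ are pure bookkeeping: they correspond exactly to where the spatial regularity of $u$ in \cref{thm:regu} starts to drop below $\dot H^2$, with the $\epsilon^{-1/2}$ constants producing the logarithmic factors after optimizing $\epsilon \sim 1/\ln(1/h)$.
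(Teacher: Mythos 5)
Your treatment of \cref{eq:u-wt-u_h-1} is exactly the paper's: the splitting into $S\D_{0+}^\alpha u_0 - S_h\D_{0+}^\alpha P_h u_0$ and $Sf(u)-S_hP_hf(u)$, the application of \cref{eq:err_u0,eq:err_g}, and the control of $\nm{f(u)}_{L^\infty(0,T;L^2(\Omega))}$ via \cref{eq:regu-C} is precisely how the paper obtains the pointwise bound.

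For \cref{eq:u-wt-u_h-2,eq:u-wt-u_h-3}, however, your resolvent-based route has a genuine gap. The mechanism you invoke --- that the contour integration delivers a rate $h^\beta$ ``whenever the underlying solution lies in $\dot H^\beta(\Omega)$'' --- conflates two different solutions. The operator difference $(r^\alpha e^{\pm i\alpha\pi}-\Delta)^{-1}-(r^\alpha e^{\pm i\alpha\pi}-\Delta_h)^{-1}P_h$ is an \emph{elliptic} FEM error, and for $L^2(\Omega)$ data on a convex domain the elliptic solution is always in $\dot H^2(\Omega)$, so this difference is always $O(h^2)$; it knows nothing about the reduced spatial regularity of the \emph{parabolic} solution $u$ in the regimes $\alpha\geqslant 1/3$ or $\alpha\geqslant 1/2$. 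The reduced rates $h^{1/\alpha-1}$ and $h^{1/\alpha}$ arise from the interaction of the $t^{-\alpha}$ singularity with the time integration (note $t^{-\alpha}\notin L^2(0,T)$ for $\alpha\geqslant 1/2$), and naively integrating the pointwise bounds of \cref{lem:Lubich} either diverges or, via \cref{eq:err_g}, leaves a spurious $\ln(1/h)$ that the clean $h^2$ rate in the $0<\alpha<1/2$ case of \cref{eq:u-wt-u_h-3} does not tolerate. The paper avoids all of this: since $\widetilde u_h$ defined by \cref{eq:wt-u_h} is the Galerkin approximation of the \emph{linear} problem with right-hand side $f(u)$, subtracting its variational equation from \cref{eq:weak_sol_int} (and using $\dual{u_0-P_hu_0,v_h}_\Omega=0$) gives exact Galerkin orthogonality for $u-\widetilde u_h$. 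A routine energy argument based on the coercivity in \cref{lem:coer} and \cref{lem:regu-basic}, followed by a duality argument, then reduces everything to best-approximation errors of $u$ in $\dot H^\beta(\Omega)$, and the three regimes of \cref{thm:regu} (with $\epsilon\sim 1/\ln(1/h)$ at the thresholds, as you correctly anticipate) hand over the stated rates directly. You should replace the contour argument for these two estimates with this orthogonality-plus-energy/duality argument; the bookkeeping at the thresholds is then exactly as you describe.
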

\begin{proof}
  By \cref{lem:Lubich,thm:regu}, a simple calculation gives \cref{eq:u-wt-u_h-1}. By
  \cref{lem:coer,lem:regu-basic} and the regularity estimates in \cref{thm:regu}, a
  routine energy argument and a routine duality argument yield
  \cref{eq:u-wt-u_h-2,eq:u-wt-u_h-3}. This completes the proof.
\end{proof}

\medskip\noindent{\bf Proof of \cref{thm:space}.} Let us first prove \cref{eq:space}.
  By \cref{eq:u_h,eq:wt-u_h,eq:S_h-E_h} we have
  \[
    u_h(t) - \widetilde u_h(t) = \int_0^t
    E_h(t-s) \big( f(u_h(s)) - f(u(s)) \big) \, \mathrm{d}s,
  \]
  so that
  \begin{small}
  \begin{align*}
    & \nm{(u_h - \widetilde u_h)(t)}_{L^2(\Omega)} \\
    \lesssim{} & \int_0^t (t-s)^{\alpha-1}
    \nm{(u_h-u)(s)}_{L^2(\Omega)} \, \mathrm{d}s
    \quad \text{(by \cref{eq:E_h})} \\
    \lesssim{} & \int_0^t (t-s)^{\alpha-1} \Big(
      \nm{(u_h-\widetilde u_h)(s)}_{L^2(\Omega)} +
      \nm{(u - \widetilde u_h)(s)}_{L^2(\Omega)}
    \Big) \, \mathrm{d}s \\
    \lesssim{} &
    h^2 \ln(1/h) \nm{u_0}_{L^2(\Omega)} +
    \int_0^t(t-s)^{\alpha-1}
    \nm{(u_h-\widetilde u_h)(s)}_{L^2(\Omega)} \, \mathrm{d}s
    \quad \text{(by \cref{eq:u-wt-u_h-1}).}
  \end{align*}
  \end{small}
  Therefore, using \cite[Theorem 1.26]{Yagi2010} yields
  \[
    \nm{(u_h - \widetilde u_h)(t)}_{L^2(\Omega)} \lesssim
    h^2 \ln(1/h) \nm{u_0}_{L^2(\Omega)},
  \]
  which, together with \cref{eq:u-wt-u_h-1}, proves \cref{eq:space}.

  Then let us prove \cref{eq:u-u_h-energy}. Let $ \theta_h := u_h - \widetilde u_h $.
  For any $ 0 < t \leqslant T $, we have
  \[
    \dual{\D_{0+}^\alpha \theta_h, \theta_h}_{\Omega \times (0,t)} +
    \nm{\theta_h}_{L^2(0,t;\dot H^1(\Omega))}^2 =
    \dual{f(u_h) - f(u), \theta_h}_{\Omega \times (0,t)},
  \]
  so that \cref{lem:coer} implies
  \[
    \nm{\D_{0+}^{\alpha/2} \theta_h}_{L^2(0,t;L^2(\Omega))}^2 +
    \nm{\theta_h}_{L^2(0,t;\dot H^1(\Omega))}^2 \lesssim
    \dual{f(u_h) - f(u), \theta_h}_{\Omega \times (0,t)}.
  \]
  Since
  \begin{align*}
    & \dual{f(u_h) - f(u), \theta_h}_{\Omega \times (0,t)} \\
    \lesssim{} &
    \nm{u_h-u}_{L^2(0,t;L^2(\Omega))}
    \nm{\theta_h}_{L^2(0,t;L^2(\Omega))} \\
    \lesssim{} &
    \nm{\theta_h}_{L^2(0,t;L^2(\Omega))}^2 +
    \nm{u-\widetilde u_h}_{L^2(0,t;L^2(\Omega))}^2,
  \end{align*}
  it follows that, for any $ 0 < t \leqslant T $,
  \begin{align}
    & \nm{\D_{0+}^{\alpha/2}\theta_h}_{L^2(0,t;L^2(\Omega))}^2 +
    \nm{\theta_h}_{L^2(0,t;\dot H^1(\Omega))}^2 \notag \\
    \lesssim{} &
    \nm{u-\widetilde u_h}_{L^2(0,t;L^2(\Omega))}^2 +
    \nm{\theta_h}_{L^2(0,t;L^2(\Omega))}^2. \label{eq:killing}
  \end{align}
  Hence, using \cref{lem:gronwall} gives
  \[
    \nm{\D_{0+}^{\alpha/2}\theta_h}_{L^2(0,T;L^2(\Omega))}
    \lesssim \nm{u-\widetilde u_h}_{L^2(0,T;L^2(\Omega))},
  \]
  and so \cref{lem:regu-basic} implies
  \begin{equation}
    \label{eq:eve}
    \nm{\theta_h}_{{}_0H^{\alpha/2}(0,T;L^2(\Omega))}
    \lesssim \nm{u-\widetilde u_h}_{L^2(0,T;L^2(\Omega))}.
  \end{equation}
  Combining the above three estimates yields
  \begin{align*}
    \nm{\theta_h}_{{}_0H^{\alpha/2}(0,T;L^2(\Omega))} +
    \nm{\theta_h}_{L^2(0,T;\dot H^1(\Omega))} \lesssim
    \nm{u-\widetilde u_h}_{L^2(0,T;L^2(\Omega))}.
  \end{align*}
  It follows that
  \begin{align*}
    \nm{u-u_h}_{{}_0H^{\alpha/2}(0,T;L^2(\Omega))} & \lesssim
    \nm{u-\widetilde u_h}_{{}_0H^{\alpha/2}(0,T;L^2(\Omega))}, \\
    \nm{u-u_h}_{L^2(0,T;L^2(\Omega))} & \lesssim
    \nm{u-\widetilde u_h}_{L^2(0,T;L^2(\Omega))}, \\
    \nm{u-u_h}_{L^2(0,T;\dot H^1(\Omega))} & \lesssim
    \nm{u-\widetilde u_h}_{L^2(0,T;\dot H^1(\Omega))}.
  \end{align*}
  Therefore, using \cref{eq:u-wt-u_h-2,eq:u-wt-u_h-3} proves
  \cref{eq:u-u_h-frac,eq:u-u_h-energy}, respectively. This completes the proof.
\hfill\ensuremath{\blacksquare}

\section{Full discretization}
\label{sec:discretization}
Assume that $ J \in \mathbb N_{>0} $ and $ \sigma \geqslant 1 $. Define
\begin{align*}
  t_j &:= (j/J)^\sigma T, \quad\,\,  0 \leqslant j \leqslant J, \\
  \tau_j &:= t_j - t_{j-1}, \quad 1 \leqslant j \leqslant J,
\end{align*}
and abbreviate $ \tau_J $ to $ \tau $ for convenience. Let
\[
  \mathcal W_{h,\tau} := \{
    V \in L^2(0,T; \mathcal V_h):\ V|_{(t_{j-1},t_j)}
    \text{ is constant valued for each } 1 \leqslant j \leqslant J
  \}.
\]
The full discretization of problem \cref{eq:model} reads as follows: seek $ U \in
\mathcal W_{h,\tau} $ such that
\begin{equation}
  \label{eq:numer_sol}
  \dual{\D_{0+}^\alpha(U-u_0), V}_{\Omega \times (0,T)} +
  \dual{\nabla U, \nabla V}_{\Omega \times (0,T)} =
  \dual{f(U), V}_{\Omega \times (0,T)}
\end{equation}
for all $ V \in \mathcal W_{h,\tau} $.

\begin{thm} 
  \label{thm:stab1}
  If
  \begin{equation}
    \label{eq:tau-cond}
    \tau < \Big( \frac1{L\Gamma(2-\alpha)} \Big)^{1/\alpha}
  \end{equation}
  then discretization \cref{eq:numer_sol} admits a unique solution $ U \in \mathcal
  W_{h,\tau} $.
\end{thm}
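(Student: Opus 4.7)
The plan is to reduce the well-posedness of the full discretization \cref{eq:numer_sol} to the unique solvability of a triangular sequence of nonlinear algebraic problems on $ \mathcal V_h $, one per time interval. Since every $ U \in \mathcal W_{h,\tau} $ has the form $ U|_{(t_{j-1},t_j)} = U^j \in \mathcal V_h $, and the test space equals the trial space, it is natural to plug $ V = \chi_{(t_{n-1},t_n)} v_h $ for arbitrary $ v_h \in \mathcal V_h $ into \cref{eq:numer_sol}. This decouples the Laplacian contribution into $ \tau_n \dual{\nabla U^n, \nabla v_h}_\Omega $ and the nonlinear contribution into $ \tau_n \dual{f(U^n), v_h}_\Omega $, leaving the fractional term as $ \dual{[\D_{0+}^{\alpha-1}(U-u_0)](t_n) - [\D_{0+}^{\alpha-1}(U-u_0)](t_{n-1}), v_h}_\Omega $.

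Next I would evaluate $ \D_{0+}^{\alpha-1}(U-u_0) $ at $ t_n $ and $ t_{n-1} $ by integrating the Riemann--Liouville kernel explicitly against the piecewise-constant integrand $ U - u_0 $. The only contribution involving $ U^n $ itself comes from the innermost subinterval $ (t_{n-1},t_n) $ of $ [\D_{0+}^{\alpha-1}(U-u_0)](t_n) $, whose closed-form evaluation yields the diagonal coefficient $ \tau_n^{1-\alpha}/\Gamma(2-\alpha) $; all remaining terms depend only on $ U^1,\dots,U^{n-1} $, $ u_0 $, and known kernel data. Dividing by $ \tau_n $, the step-$n$ problem becomes: find $ U^n \in \mathcal V_h $ solving
\[
  \big( c_n I - \Delta_h \big) U^n = P_h f(U^n) + R^n,
  \qquad c_n := \tau_n^{-\alpha}/\Gamma(2-\alpha),
\]
where $ R^n \in \mathcal V_h $ is assembled from the already computed iterates.

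I would then handle this equation by Banach's fixed-point theorem applied to $ \Phi(w) := (c_n I - \Delta_h)^{-1}\big( P_h f(w) + R^n \big) $ on $ L_h^2(\Omega) $. Since $ -\Delta_h $ is positive semidefinite, $ \nm{(c_n I - \Delta_h)^{-1}}_{\mathcal L(L^2(\Omega))} \leqslant c_n^{-1} = \Gamma(2-\alpha) \tau_n^\alpha $, and $ P_h f $ is Lipschitz on $ L^2(\Omega) $ with constant at most $ L $ because $ \nm{P_h}_{\mathcal L(L^2(\Omega))} \leqslant 1 $. Hence the contraction factor of $ \Phi $ is at most $ L \Gamma(2-\alpha) \tau_n^\alpha $. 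Observing that on the graded grid $ t_j = (j/J)^\sigma T $ with $ \sigma \geqslant 1 $ the step sizes $ \tau_j $ are nondecreasing in $ j $, so $ \tau_n \leqslant \tau_J = \tau $ for every $ n $, the hypothesis \cref{eq:tau-cond} guarantees $ L \Gamma(2-\alpha) \tau_n^\alpha < 1 $ uniformly in $ n $. Hence $ \Phi $ is a strict contraction at every step, and iterating from $ n=1 $ up to $ n = J $ produces a unique $ U \in \mathcal W_{h,\tau} $.

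The only nonroutine step is the bookkeeping behind extracting the diagonal coefficient $ \tau_n^{1-\alpha}/\Gamma(2-\alpha) $ from $ [\D_{0+}^{\alpha-1}(U-u_0)](t_n) - [\D_{0+}^{\alpha-1}(U-u_0)](t_{n-1}) $; one must verify that the constant-in-time correction $ -u_0 $, which does not vanish on $ (t_0,t_1) $, only contributes to the history term $ R^n $ and does not perturb the diagonal. Once this identification is in place, the remainder is a textbook contraction argument, so I do not anticipate any substantial technical obstruction beyond careful arithmetic with the Riemann--Liouville kernel.
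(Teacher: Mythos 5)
Your argument is correct, and it shares the paper's basic strategy of viewing \cref{eq:numer_sol} as a time-stepping scheme and solving the nonlinear problem on one interval at a time; the extraction of the diagonal coefficient $\tau_n^{1-\alpha}/\Gamma(2-\alpha)$ (i.e.\ $c_n=\tau_n^{-\alpha}/\Gamma(2-\alpha)$ after dividing by $\tau_n$) is exactly the quantity the paper also exploits, and your observation that $\tau_j$ is nondecreasing for $\sigma\geqslant 1$, so $\tau_n\leqslant\tau$, is the same (implicit) ingredient that makes \cref{eq:tau-cond} work uniformly over the steps. Where you differ is in the solvability tool: the paper splits the proof into two parts, proving uniqueness by an energy identity, namely $\dual{\D_{0+}^\alpha e,e}_{\Omega\times(0,t_1)}=\frac{t_1^{-\alpha}}{\Gamma(2-\alpha)}\nm{e}_{L^2(0,t_1;L^2(\Omega))}^2$ combined with the Lipschitz bound, and proving existence by showing the step map $F$ is coercive, $\dual{FV,V}_\Omega\geqslant(\tau_1^{1-\alpha}/\Gamma(2-\alpha)-L\tau_1)\nm{V}_{L^2(\Omega)}^2$, and invoking the acute angle (Brouwer-type) theorem; you instead recast each step as $(c_nI-\Delta_h)U^n=P_hf(U^n)+R^n$ and apply Banach's fixed-point theorem, using $\nm{(c_nI-\Delta_h)^{-1}}_{\mathcal L(L^2(\Omega))}\leqslant c_n^{-1}$ and $\nm{P_h}_{\mathcal L(L^2(\Omega))}\leqslant 1$ to get the contraction factor $L\Gamma(2-\alpha)\tau_n^\alpha<1$. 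Your route delivers existence and uniqueness simultaneously and yields a convergent Picard iteration, but it needs the global Lipschitz property of $f$ (not the normalization $f(0)=0$); the paper's route requires Lipschitz only for uniqueness, while its existence part needs just the coercivity (hence could tolerate merely continuous $f$ with a suitable growth/sign condition). Both proofs hinge on the same smallness comparison between $\tau_n^{1-\alpha}/\Gamma(2-\alpha)$ and $L\tau_n$, so neither is stronger as far as \cref{eq:tau-cond} is concerned.
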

\begin{proof}
  Let us first prove that the solution $ U \in \mathcal W_{h,\tau} $ to
  discretization \cref{eq:numer_sol} is unique if it exists. To this end, we assume
  that $ \widetilde U $ is also a solution to problem \cref{eq:numer_sol}. Setting $
  e := U - \widetilde U $, from \cref{eq:numer_sol} we obtain that
  \[
    \dual{\D_{0+}^\alpha e, e}_{\Omega \times (0,t_1)} +
    \nm{e}_{L^2(0,t_1;\dot H^1(\Omega))}^2 =
    \dual{f(U) - f(\widetilde U), e}_{\Omega \times (0,t_1)}.
  \]
  Since
  \[
    \dual{f(U)-f(\widetilde U), e}_{\Omega \times (0,t_1)}
    \leqslant L \nm{e}_{L^2(0,t_1;L^2(\Omega))}^2,
  \]
  it follows that
  \[
    \dual{\D_{0+}^\alpha e, e}_{\Omega \times (0,t_1)}
    \leqslant L \nm{e}_{L^2(0,t_1;L^2(\Omega))}^2,
  \]
  which, together with the equality
  \[
    \dual{\D_{0+}^\alpha e, e}_{\Omega \times (0,t_1)} =
    \frac{t_1^{-\alpha}}{\Gamma(2-\alpha)}
    \nm{e}_{L^2(0,t_1;L^2(\Omega))}^2,
  \]
  implies
  \[
    \Big( \frac{t_1^{-\alpha}}{\Gamma(2-\alpha)} - L \Big)
    \nm{e}_{L^2(0,t_1;L^2(\Omega))}^2 \leqslant 0.
  \]
  Therefore, \cref{eq:tau-cond} indicates that $ e|_{(0,t_1)} = 0 $. Analogously, we
  can obtain sequentially that
  \[
    e|_{(t_1,t_2)} = 0, \, e|_{(t_2,t_3)} = 0, \, \ldots,
    e|_{(t_{J-1},t_J)} = 0.
  \]
  This proves the uniqueness of $ U $.

  Then let us prove that discretization \cref{eq:numer_sol} admits a solution $ U \in
  \mathcal W_{h,\tau} $. Observing that discretization \cref{eq:numer_sol} is a
  time-stepping scheme, we start by proving the existence of $ U $ on $ (0,t_1) $.
  This is equivalent to proving the existence of a zero of $ F: \mathcal V_h \to
  \mathcal V_h $, defined by that
  \begin{align*}
    \dual{FV, W}_\Omega =
    \dual{\D_{0+}^\alpha V, W}_{\Omega \times (0,t_1)} +
    \tau_1 \dual{\nabla V, \nabla W}_\Omega -
    \tau_1 \dual{f(V), W}_\Omega
  \end{align*}
  for all $ V, W \in \mathcal V_h $. For any $ V \in \mathcal V_h $, a simple
  calculation gives
  \begin{align*}
    \dual{FV,V}_\Omega & \geqslant
    \big( \tau_1^{1-\alpha} / \Gamma(2-\alpha) - L\tau_1 \big)
    \nm{V}_{L^2(\Omega)}^2 \geqslant C_{\alpha,\tau,L} \nm{V}_{L^2(\Omega)}^2.
  \end{align*}
  Applying the famous acute angle theorem (cf.~\cite[Chapter 9]{Evans2010}) then
  yields that $ F $ admits a zero indeed. This proves the existence of $ U $ on $
  (0,t_1) $. Similarly, we can prove sequentially that $ U $ exists on $ (t_1,t_2),
  (t_2,t_3), \ldots, (t_{J-1},t_J) $. Therefore, discretization \cref{eq:numer_sol}
  admits a solution indeed. This completes the proof.
\end{proof}

\begin{lem}
  \label{lem:gronwall-discr}
  Assume that $ \epsilon $ and $ A $ are two positive constants. If $ V \in \mathcal
  W_{h,\tau} $ satisfies that
  \begin{equation}
    \label{eq:831}
    \nm{\D_{0+}^{\alpha/2} V}_{L^2(0,t_j;L^2(\Omega))}^2
    \leqslant \epsilon + A \nm{V}_{L^2(0,t_j;L^2(\Omega))}^2
    \quad\text{ for all } 1 \leqslant j \leqslant J,
  \end{equation}
  then there exists a positive constant $ \tau^* $ depending only on $ \alpha $, $ A
  $ and $ T $ such that if $ \tau < \tau^* $ then
  \begin{equation}
    \label{eq:gronwall-discr}
    \nm{\D_{0+}^{\alpha/2}V}_{L^2(0,t_j;L^2(\Omega))} \leqslant
    \sqrt{\epsilon} \, C_{\alpha,A,T} \exp\big(
      j\sigma TJ^{-1} (\tau^*-\tau)^{-1}/2
    \big)
  \end{equation}
  for each $ 1 \leqslant j \leqslant J $.
\end{lem}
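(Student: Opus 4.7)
The plan is to mirror the continuous argument of \cref{lem:gronwall} up to the pointwise estimate on a suitable fractional integral of $V$, and then replace the final application of the classical Gr\"onwall inequality by a discrete cell-by-cell iteration which is the source of the claimed exponential growth in $j$.

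First, because $\alpha/2 < 1/2$ every $V \in \mathcal W_{h,\tau}$ lies in ${}_0H^{\alpha/2}(0,T;L^2(\Omega))$, so the duality identity used in the proof of \cref{lem:gronwall} is available at each grid node: combining
\[
  \nm{V}_{L^2(0,t_j;L^2(\Omega))}^2 \leqslant C_\alpha \nm{\D_{0+}^{\alpha/2}V}_{L^2(0,t_j;L^2(\Omega))} \nm{\D_{0+}^{-\alpha/2}V}_{L^2(0,t_j;L^2(\Omega))}
\]
(via \cref{lem:coer}) with Young's inequality and the hypothesis \cref{eq:831} yields
\[
  \nm{\D_{0+}^{\alpha} \D_{0+}^{-\alpha/2} V}_{L^2(0,t_j;L^2(\Omega))}^2 \leqslant 2\epsilon + C_{\alpha,A} \nm{\D_{0+}^{-\alpha/2} V}_{L^2(0,t_j;L^2(\Omega))}^2.
\]
Bootstrapping this inequality $k-1$ further times (with $k$ the largest integer such that $2^{k-1}\alpha \leqslant 1/2$) and then once more with the additional $1/4$-shift exactly as in \cref{lem:gronwall} gives, with $g := \D_{0+}^{-(2^k-1)\alpha/2 - 1/4}V$, the Sobolev bound
\[
  \nm{\D_{0+}^{2^k \alpha/2 + 1/4} g}_{L^2(0,t_j;L^2(\Omega))}^2 \leqslant 2^{k+1}\epsilon + C_{\alpha,A} \nm{g}_{L^2(0,t_j;L^2(\Omega))}^2,
\]
and hence, by the H\"older argument on the integral representation of $g$, the pointwise bound
\[
  \nm{g(t)}_{L^2(\Omega)}^2 \leqslant C_{\alpha,A,T}\bigl(\epsilon + \nm{g}_{L^2(0,t;L^2(\Omega))}^2\bigr), \quad 0 < t \leqslant t_j,
\]
with the constant $C_{\alpha,A,T}$ independent of the subdivision.

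Second, in place of the continuous Gr\"onwall step I would iterate cell by cell. Integrating the pointwise bound on $(t_{i-1},t_i]$ produces a per-step amplification of the form $(1 - C_{\alpha,A,T}\tau_i)^{-1}$; declaring $\tau^* := 1/C_{\alpha,A,T}$ and requiring $\tau < \tau^*$ keeps this factor positive and controlled by $\exp\bigl(\tau/(\tau^*-\tau)\bigr)$ on each cell. Multiplying across $i = 1, \ldots, j$ and using Bernoulli's inequality $\tau \leqslant \sigma T J^{-1}$ gives
\[
  \nm{g}_{L^2(0,t_j;L^2(\Omega))}^2 \leqslant C_{\alpha,A,T}\,\epsilon \exp\bigl(j\sigma TJ^{-1}(\tau^*-\tau)^{-1}\bigr).
\]
Substituting this into the Sobolev bound above (noting $\D_{0+}^{2^k\alpha/2 + 1/4}g = \D_{0+}^{\alpha/2}V$) and taking a square root absorbs the factor $1/2$ into the exponent and produces \cref{eq:gronwall-discr}.

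The principal obstacle I expect lies in this cell-by-cell step. Since \cref{eq:831} is imposed only at the grid nodes, the bootstrap inequalities are available at $t_j$ rather than for continuous $t$, so the iteration must exploit the monotonicity of $\nm{\cdot}_{L^2(0,\cdot;L^2(\Omega))}$ together with the continuity of $g$ (which, being a fractional integral of a bounded piecewise-constant function, is in fact H\"older continuous) to transfer the estimate between consecutive nodes without losing the linear dependence on $\tau_i$. The smallness condition $\tau < \tau^*$ is dictated precisely by the absorption required to close this iteration; every other ingredient is a direct transcription of the continuous proof.
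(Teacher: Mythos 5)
Your proposal is correct and follows essentially the same route as the paper: bootstrap the duality/Young argument of Lemma 3.6 at the grid nodes to reach a pointwise bound on $g=\D_{0+}^{-\gamma+\alpha/2}V$ for some $\gamma\in(1/2,1)$, then run a cell-by-cell iteration with $\tau^*=1/C_{\alpha,A,T}$ and the amplification factors $(1-\tau_m/\tau^*)^{-1}$, finishing with $\tau\leqslant\sigma T J^{-1}$. The obstacle you flag at the end is in fact harmless for exactly the reason you give: since $t\mapsto\nm{\D_{0+}^{\gamma}g}_{L^2(0,t;L^2(\Omega))}$ is nondecreasing, the node-level bound at $t_j$ already controls $\snm{g(t)}$ for all $t\leqslant t_j$, which is all the increment $G_j-G_{j-1}\leqslant\tau_j\nm{g}_{L^\infty(0,t_j;L^2(\Omega))}^2$ requires.
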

\begin{proof}
  Proceeding as in the proof of \cref{lem:gronwall} yields that there exists $ 1/2 <
  \gamma < 1 $ depending only on $ \alpha $ such that
  \begin{equation}
    \label{eq:shit20}
    \nm{\D_{0+}^\gamma g}_{L^2(0,t_j;L^2(\Omega))}^2 \leqslant
    C_{\alpha,A,T} \big(\epsilon + \nm{g}_{L^2(0,t_j;L^2(\Omega))}^2 \big)
  \end{equation}
  for each $ 1 \leqslant j \leqslant J $, where $ g := \D_{0+}^{-\gamma+\alpha/2} V $.
  Since
  \begin{align*}
    & \nm{g}_{L^\infty(0,t_j;L^2(\Omega))} =
    \nm{\D_{0+}^{-\gamma} \D_{0+}^\gamma g}_{L^\infty(0,t_j;L^2(\Omega))} \\
    ={} & \sup_{0 < t \leqslant t_j}
    \Nm{
      \frac1{\Gamma(\gamma)} \int_0^t (t-s)^{\gamma-1}
      \D_{0+}^\gamma g(s) \, \mathrm{d}s
    }_{L^2(\Omega)} \\
    \leqslant{} &
    C_\alpha \sup_{0 < t \leqslant t_j}
    \int_0^t (t-s)^{\gamma-1}
    \nm{\D_{0+}^\gamma g(s)}_{L^2(\Omega)} \, \mathrm{d}s \\
    \leqslant{} &
    C_{\alpha,T} \nm{\D_{0+}^\gamma g}_{L^2(0,t_j;L^2(\Omega))},
  \end{align*}
  it follows that
  \begin{equation}
    \label{eq:shit30}
    \nm{g}_{L^\infty(0,t_j;L^2(\Omega))}^2 \leqslant
    C_{\alpha,A,T} \big( \epsilon + \nm{g}_{L^2(0,t_j;L^2(\Omega))}^2 \big),
    \quad 1 \leqslant j \leqslant J.
  \end{equation}
  Letting $ \tau^* := 1/C_{\alpha,A,T} $ and assuming that $ \tau < \tau^* $, by
  \cref{eq:shit30} we obtain
  \begin{align*}
    \tau_j^{-1} (G_j - G_{j-1}) \leqslant
    \big( \epsilon + G_j \big) / \tau^*,
    \quad 1 \leqslant j \leqslant J,
  \end{align*}
  where $ G_j := \nm{g}_{L^2(0,t_j;L^2(\Omega))}^2 $. A straightforward computation
  then yields
  \begin{align*}
    G_j & \leqslant \epsilon/\tau^*
    \sum_{k=1}^j \tau_k \left(
      \prod_{m=k}^j (1-\tau_m/\tau^*)^{-1}
    \right) \\
    & < \epsilon/\tau^*
    \sum_{k=1}^j \tau (1-\tau/\tau^*)^{-(j-k+1)} \\
    & = \epsilon \tau/\tau^*
    \frac{(1-\tau/\tau^*)^{-j}-1}{\tau/\tau^*} \\
    & < \epsilon (1-\tau/\tau^*)^{-j}, \quad
    1 \leqslant j \leqslant J.
  \end{align*}
  From the estimate
  \[
    (1-\tau/\tau^*)^{-j} < \exp\big(j\tau/(\tau^*-\tau)\big)
    < \exp\big(j\sigma T J^{-1}(\tau^*-\tau)^{-1}\big),
  \]
  it follows that
  \[
    G_j < \epsilon \exp\big(j\sigma T J^{-1}(\tau^*-\tau)^{-1}\big),
    \quad 1 \leqslant j \leqslant J.
  \]
  This implies, by \cref{eq:shit20}, that
  \begin{align*}
    \nm{\D_{0+}^\gamma g}_{L^2(0,t_j;L^2(\Omega))}^2
    \leqslant{} &
    \epsilon C_{\alpha,A,T} \exp\big(
      j\sigma T J^{-1} (\tau^*-\tau)^{-1}
    \big), \quad 1 \leqslant j \leqslant J.
  \end{align*}
  Therefore, \cref{eq:gronwall-discr} follows from the fact
  \[
    \D_{0+}^{\alpha/2} V = \D_{0+}^\gamma g,
  \]
  and this lemma is thus proved.
\end{proof}


In the rest of this paper, we assume that
\[
  \tau < \min\big\{ (L\Gamma(2-\alpha))^{-1/\alpha}, \tau^*/2 \big\},
\]
where $ \tau^* $ is defined in \cref{lem:gronwall-discr}. We also assume $ J
\geqslant 2 $ for convenience.
Define
\begin{align}
  \eta_1(\alpha,\sigma,J)  &:=
  \begin{cases}
    J^{-\sigma/2} \sqrt{
      \frac{1-J^{\sigma+\alpha-2}}{
        2-\alpha-\sigma
      }
    }
    & \text{if } \sigma < 2-\alpha, \\
    J^{-(1-\alpha/2)} \sqrt{\ln J}
    & \text{if } \sigma = 2-\alpha, \\
    J^{-\sigma/2}  \sqrt{
      \frac{J^{\sigma+\alpha-2}-1}{
        \sigma + \alpha - 2
      }
    }
    & \text{if } 2-\alpha < \sigma < 3-\alpha,
  \end{cases} \label{eq:eta1} \\
  \eta_2(\alpha,\sigma,J) &:=
  \begin{cases}
    J^{-\sigma/2} \sqrt{
      \frac{1-J^{\sigma-2}}{ 2-\sigma }
    }
    & \text{if } \sigma < 2, \\
    J^{-1} \sqrt{\ln J} & \text{if } \sigma = 2, \\
    J^{-\sigma/2} \sqrt{
      \frac{J^{\sigma-2}-1}{ \sigma - 2 }
    } & \text{if } 2  <  \sigma < 3.
  \end{cases} \label{eq:eta2}
\end{align}
The main results of this section are the following two error estimates.
\begin{thm}
	\label{thm:uh-U}
  It holds that
	\begin{equation}
		\label{eq:uh-U-1}
		\begin{aligned}
			& \nm{u_h-U}_{{}_0H^{\alpha/2}(0,T;L^2(\Omega))} +
			\nm{u_h-U}_{L^2(0,T;\dot H^1(\Omega))} \\
			& \quad{} + J^{\alpha/2} \nm{u_h-U}_{L^2(0,T;L^2(\Omega))} \\
			\lesssim{} &
			J^{-(1-\alpha)/2} \sqrt{\ln J} \, \nm{u_0}_{L^2(\Omega)}
		\end{aligned}
	\end{equation}
	and
	\begin{equation}
		\label{eq:uh-U-2}
		\begin{aligned}
			& \nm{u_h - U}_{{}_0H^{\alpha/2}(0,T;L^2(\Omega))} +
			\nm{u_h - U}_{L^2(0,T;\dot H^1(\Omega))} \\
			& \quad {} + J^{\alpha/2} \nm{u_h - U}_{L^2(0,T;L^2(\Omega))} \\
			\lesssim{} &
			\big(
				\eta_1(\alpha,\sigma,J) + \eta_2(\alpha,\sigma,J)
			\big) \nm{P_hu_0}_{\dot H^1(\Omega)}.
		\end{aligned}
	\end{equation}
\end{thm}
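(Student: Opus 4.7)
The strategy is to reduce the estimation to the new discrete Gr\"onwall inequality of \cref{lem:gronwall-discr}. First I would test the variational forms of the semi-discretization \cref{eq:u_h} and of \cref{eq:numer_sol} against an arbitrary $V\in\mathcal W_{h,\tau}$ and subtract; since $\dual{u_0-P_hu_0,W}_\Omega=0$ for every $W\in\mathcal V_h$, the result is the Galerkin orthogonality
\[
\int_0^T \dual{\D_{0+}^\alpha(u_h-U), V}_\Omega + \dual{\nabla(u_h-U), \nabla V}_\Omega \, \mathrm{d}t = \int_0^T \dual{f(u_h)-f(U), V}_\Omega \, \mathrm{d}t
\]
for all $V\in\mathcal W_{h,\tau}$. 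Let $\Pi_\tau$ denote the $L^2$-orthogonal projection in time onto $\mathcal W_{h,\tau}$, decompose $u_h-U=\theta+\rho$ with $\theta:=\Pi_\tau u_h-U\in\mathcal W_{h,\tau}$ and $\rho:=u_h-\Pi_\tau u_h$, and exploit the mean-zero property $\int_{t_{k-1}}^{t_k}\rho\,\mathrm{d}t=0$ so that $\rho$ drops out of the elliptic pairing against every $V\in\mathcal W_{h,\tau}$.

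For each $1\leqslant j\leqslant J$ I would then test with $V=\theta\chi_{[0,t_j]}\in\mathcal W_{h,\tau}$. Applying \cref{lem:coer} on $(0,t_j)$ with $\gamma=\alpha/2$ produces coercivity of the $\theta$-contribution to the fractional term and converts $\dual{\D_{0+}^\alpha\rho,\theta}$ into the symmetric pairing $(\D_{0+}^{\alpha/2}\rho,\D_{t_j-}^{\alpha/2}\theta)_{L^2(0,t_j;L^2(\Omega))}$, which \cref{lem:coer} bounds by $\nm{\D_{0+}^{\alpha/2}\rho}_{L^2(0,t_j;L^2(\Omega))}\nm{\D_{0+}^{\alpha/2}\theta}_{L^2(0,t_j;L^2(\Omega))}$. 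Combining with the Lipschitz continuity of $f$ and Young's inequality yields, for every $1\leqslant j\leqslant J$,
\[
\nm{\D_{0+}^{\alpha/2}\theta}_{L^2(0,t_j;L^2(\Omega))}^2 + \nm{\theta}_{L^2(0,t_j;\dot H^1(\Omega))}^2 \lesssim \nm{\D_{0+}^{\alpha/2}\rho}_{L^2(0,t_j;L^2(\Omega))}^2 + \nm{\rho}_{L^2(0,t_j;L^2(\Omega))}^2 + \nm{\theta}_{L^2(0,t_j;L^2(\Omega))}^2.
\]
This is precisely the hypothesis of \cref{lem:gronwall-discr}, whose conclusion together with \cref{lem:regu-basic} delivers
\[
\nm{\theta}_{{}_0H^{\alpha/2}(0,T;L^2(\Omega))} + \nm{\theta}_{L^2(0,T;\dot H^1(\Omega))} \lesssim \nm{\D_{0+}^{\alpha/2}\rho}_{L^2(0,T;L^2(\Omega))} + \nm{\rho}_{L^2(0,T;L^2(\Omega))}.
\]

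The projection errors on the right are then estimated differently in the two regimes. For \cref{eq:uh-U-2} with $u_0\in\dot H^1(\Omega)$, the pointwise bounds $\nm{u_h'(t)}_{L^2(\Omega)}\lesssim t^{-(1-\alpha/2)}\nm{P_hu_0}_{\dot H^1(\Omega)}$ and $\nm{u_h'(t)}_{\dot H^1(\Omega)}\lesssim t^{-1}\nm{P_hu_0}_{\dot H^1(\Omega)}$ from \cref{eq:uh'-h1}, combined with the Poincar\'e estimate $\nm{v-\Pi_\tau v}_{L^2(t_{k-1},t_k;X)}\lesssim\tau_k\nm{v'}_{L^2(t_{k-1},t_k;X)}$ for $k\geqslant 2$ and a direct $C([0,T];L^2(\Omega))$-bound on $(0,t_1)$, produce upon summing over the graded grid $t_j=(j/J)^\sigma T$ exactly the factors $\eta_1$ and $\eta_2$ defined in \cref{eq:eta1,eq:eta2}. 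For \cref{eq:uh-U-1} with $u_0\in L^2(\Omega)$, the required regularity of $u_h'$ is unavailable; instead I would invoke the semi-discrete analogue of the fractional-in-time estimate \cref{eq:regu-11}, namely $\nm{u_h}_{{}_0H^{1/2-\epsilon}(0,T;L^2(\Omega))}\lesssim\epsilon^{-1/2}\nm{u_0}_{L^2(\Omega)}$, together with the fractional approximation $\nm{u_h-\Pi_\tau u_h}_{L^2(0,T;L^2(\Omega))}\lesssim\tau^{1/2-\epsilon}\nm{u_h}_{{}_0H^{1/2-\epsilon}(0,T;L^2(\Omega))}$, and optimise $\epsilon\sim 1/\ln J$ to capture the $\sqrt{\ln J}$ factor.

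The hardest part is the extra weight $J^{\alpha/2}$ in front of $\nm{u_h-U}_{L^2(0,T;L^2(\Omega))}$: a direct Poincar\'e-type embedding ${}_0H^{\alpha/2}\hookrightarrow L^2$ only reproduces the energy-norm rate and leaves the $J^{-\alpha/2}$ improvement unaccounted for. To recover it I would use a duality (Aubin--Nitsche-type) argument: for a test function $\phi\in L^2(0,T;L^2(\Omega))$ solve the backward adjoint semi-discrete problem driven by $\D_{T-}^\alpha$, use regularity of the dual solution analogous to \cref{thm:regu}, and exploit Galerkin orthogonality for its time-projection error to pick up the additional $\tau^{\alpha/2}\sim J^{-\alpha/2}$. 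The Lipschitz semi-linearity is handled by absorbing a linear remainder through \cref{lem:gronwall-discr} applied to the dual variable. Combining everything via the triangle inequality $\nm{u_h-U}\leqslant\nm{\theta}+\nm{\rho}$ in each of the three norms then completes the proof.
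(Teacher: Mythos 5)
Your overall architecture -- Galerkin orthogonality, the discrete fractional Gr\"onwall inequality of \cref{lem:gronwall-discr} to absorb the Lipschitz nonlinearity, projection-error estimates driven by the regularity \cref{eq:uh'-h1} of $u_h$ (yielding $\eta_1,\eta_2$ exactly as in \cref{lem:lxy}), and a duality argument for the extra $J^{\alpha/2}$ -- matches the paper's. The difference is the decomposition. The paper does not split $u_h-U$ through the time projection $P_\tau u_h$; it introduces the auxiliary discrete solution $\widetilde U\in\mathcal W_{h,\tau}$ solving \cref{eq:wt-U} with the source frozen at $f(u_h)$, estimates $u_h-\widetilde U$ by purely \emph{linear} energy and duality arguments (reduced to $(I-P_\tau)u_h$ via \cref{lem:lxy}), and then controls $\theta=U-\widetilde U$ by \cref{lem:gronwall-discr} with $\epsilon=\nm{u_h-\widetilde U}_{L^2(0,T;L^2(\Omega))}^2$. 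Because that $\epsilon$ is the \emph{smallest} of the three norms of $u_h-\widetilde U$ (it already carries the $J^{-\alpha/2}$ gain), all three norms of $\theta$ inherit the improved rate, and \cref{lem:uh-U} transfers everything to $u_h-U$.

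This is where your version has a genuine gap. With $\theta=\Pi_\tau u_h-U$, the fractional pairing $\dual{\D_{0+}^\alpha\rho,\theta}$ forces $\epsilon\sim\nm{\D_{0+}^{\alpha/2}\rho}_{L^2(0,T;L^2(\Omega))}^2$ in the Gr\"onwall hypothesis, so your $L^2(0,T;L^2(\Omega))$ bound for $\theta$ only comes out at the energy rate; you correctly see that a duality argument must supply the missing $J^{-\alpha/2}$, but the one you sketch does not close. If the backward dual problem is the plain linear one, the Galerkin-orthogonality identity leaves the remainder $\dual{f(u_h)-f(U),\Pi_\tau z}\lesssim L\nm{u_h-U}_{L^2(0,T;L^2(\Omega))}\nm{\phi}_{L^2(0,T;L^2(\Omega))}$, which cannot be absorbed into $\nm{u_h-U}_{L^2(0,T;L^2(\Omega))}^2$ because $L$ is not small. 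If instead the dual problem carries the divided-difference potential $(f(u_h)-f(U))/(u_h-U)\in L^\infty$, you need stability and $O(\tau^{\alpha/2})$ projection estimates for a backward fractional problem with a rough time-dependent coefficient, together with a \emph{backward} analogue of \cref{lem:gronwall-discr} (the lemma as stated is forward-in-time for $V\in\mathcal W_{h,\tau}$ with $\D_{0+}^{\alpha/2}$); "absorbing a linear remainder through \cref{lem:gronwall-discr} applied to the dual variable" is not a proof of either. The fix is exactly the paper's reordering: do the duality on the linear pair $(u_h,\widetilde U)$ first, then run the Gr\"onwall step with the resulting small $L^2(0,T;L^2(\Omega))$ quantity as $\epsilon$. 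Your treatment of \cref{eq:uh-U-1} via $u_h\in{}_0H^{1/2-\epsilon}(0,T;L^2(\Omega))$ with $\epsilon\sim1/\ln J$ is a reasonable account of the $\sqrt{\ln J}$ factor and is at the same level of detail as the paper, which only says this case "can be proved analogously."
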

\begin{rem}
  By the techniques to prove \cref{eq:uh-U-2}, we can also obtain that
  \[
    \nm{u_h - U}_{{}_0H^{\alpha/2}(0,T;L^2(\Omega))} \lesssim
    \ln(1/h) J^{-(1-\alpha/2)} \nm{u_0}_{L^2(\Omega)},
  \]
  if $ \sigma $ is large enough and $ \mathcal K_h $ is quasi-uniform.
\end{rem}
\begin{rem}
  A simple modification of discretization \cref{eq:numer_sol} seeks $ \mathcal U \in
  \mathcal W_{h,\tau} $ such that
  \begin{small}
  \[
    \dual{\D_{0+}^\alpha(\mathcal U-u_0), v_h}_{\Omega \times (t_j,t_{j+1})} +
    \dual{\nabla \mathcal U, \nabla v_h}_{\Omega \times (t_j,t_{j+1})} =
    \dual{f(\mathcal U_j), v_h}_{\Omega \times (t_j,t_{j+1})}
  \]
  \end{small}
  for all $ v_h \in \mathcal V_h $ and $ 0 \leqslant j < J $, where $ \mathcal U_0 :=
  P_h u_0 $ and $ \mathcal U_j := \lim_{t \to {t_j-}} \mathcal U(t) $ for each $ 1
  \leqslant j < J $. The above discretization costs significantly less computation
  than that of discretization \cref{eq:numer_sol}. Interestingly, following the proof
  of \cref{eq:uh-U-1} we can easily obtain the error estimate
  \begin{small}
  \begin{equation*}
    \begin{aligned}
      & \nm{u_h - \mathcal U}_{{}_0H^{\alpha/2}(0,T;L^2(\Omega))} +
      \nm{u_h - \mathcal U}_{L^2(0,T;\dot H^1(\Omega))} \\
      & \quad {} +
      J^{\alpha/2} \nm{u_h - \mathcal U}_{L^2(0,T;L^2(\Omega))} \\
      \lesssim{} &
      J^{-(1-\alpha)/2} \sqrt{\ln J}
      \nm{u_0}_{L^2(\Omega)}.
    \end{aligned}
  \end{equation*}
  \end{small}
\end{rem}

The rest of this section is devoted to proving \cref{thm:uh-U}. Define $ \widetilde U
\in \mathcal W_{h,\tau} $ by that
\begin{equation}
  \label{eq:wt-U}
  \dual{\D_{0+}^\alpha(\widetilde U - u_0), V}_{\Omega \times (0,T)} +
  \dual{\nabla \widetilde U, \nabla V}_{\Omega \times (0,T)} =
  \dual{f(u_h), V}_{\Omega \times (0,T)}
\end{equation}
for all $ V \in \mathcal W_{h,\tau} $.
\begin{lem}
  \label{lem:uh-U}
  It holds that
  \begin{align}
    \nm{u_h - U}_{L^2(0,T;L^2(\Omega))} & \lesssim
    \nm{u_h - \widetilde U}_{L^2(0,T;L^2(\Omega))},
    \label{eq:uh-U-L2} \\
    \nm{u_h - U}_{L^2(0,T;\dot H^1(\Omega))} & \lesssim
    \nm{u_h - \widetilde U}_{L^2(0,T;\dot H^1(\Omega))},
    \label{eq:uh-U-H1} \\
    \nm{u_h - U}_{{}_0H^{\alpha/2}(0,T;L^2(\Omega))} & \lesssim
    \nm{u_h - \widetilde U}_{{}_0H^{\alpha/2}(0,T;L^2(\Omega))}.
    \label{eq:uh-U-frac}
  \end{align}
\end{lem}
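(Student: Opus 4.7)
\medskip\noindent\textbf{Proof sketch for \cref{lem:uh-U}.} The natural route is to bound the auxiliary error $E := U-\widetilde U \in \mathcal W_{h,\tau}$ via an energy argument, and then to combine this with the triangle inequality to compare $u_h-U$ with $u_h-\widetilde U$. Subtracting \cref{eq:wt-U} from \cref{eq:numer_sol} yields the error identity
\[
  \dual{\D_{0+}^\alpha E, V}_{\Omega \times (0,T)} +
  \dual{\nabla E, \nabla V}_{\Omega \times (0,T)} =
  \dual{f(U)-f(u_h), V}_{\Omega \times (0,T)}
\]
for every $V \in \mathcal W_{h,\tau}$. For each $1 \leqslant j \leqslant J$ I would test with $V := E\,\mathbb{1}_{(0,t_j)}$, which belongs to $\mathcal W_{h,\tau}$ because the truncation respects the time grid. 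The fractional term is handled by \cref{lem:coer} applied on $(0,t_j)$ (note that for $\alpha/2 < 1/2$ the zero-trace condition is vacuous, so piecewise constants indeed lie in ${}_0H^{\alpha/2}(0,t_j;L^2(\Omega))$), while the nonlinear term is treated with the Lipschitz property, Cauchy--Schwarz, and Young's inequality together with the triangle split $U-u_h = E + (\widetilde U - u_h)$. This produces the key estimate
\[
  \nm{\D_{0+}^{\alpha/2} E}_{L^2(0,t_j;L^2(\Omega))}^2 +
  \nm{E}_{L^2(0,t_j;\dot H^1(\Omega))}^2
  \lesssim \nm{E}_{L^2(0,t_j;L^2(\Omega))}^2 +
  \nm{u_h-\widetilde U}_{L^2(0,T;L^2(\Omega))}^2
\]
uniformly in $j$.

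Next I would invoke the discrete Gr\"onwall inequality in \cref{lem:gronwall-discr} with $\epsilon$ proportional to $\nm{u_h-\widetilde U}_{L^2(0,T;L^2(\Omega))}^2$ and $A$ an absolute constant (the constraint $\tau < \tau^*/2$ assumed just after \cref{lem:gronwall-discr} is exactly what is needed here). This yields
\[
  \nm{\D_{0+}^{\alpha/2} E}_{L^2(0,T;L^2(\Omega))}
  \lesssim \nm{u_h-\widetilde U}_{L^2(0,T;L^2(\Omega))},
\]
which together with \cref{lem:regu-basic} (the inverse embedding $\nm{E}_{{}_0H^{\alpha/2}} \sim \nm{\D_{0+}^{\alpha/2}E}_{L^2}$ and the fractional integration bound $\nm{E}_{L^2} = \nm{\D_{0+}^{-\alpha/2}\D_{0+}^{\alpha/2} E}_{L^2} \lesssim \nm{\D_{0+}^{\alpha/2} E}_{L^2}$) immediately gives both
\[
  \nm{E}_{{}_0H^{\alpha/2}(0,T;L^2(\Omega))} + \nm{E}_{L^2(0,T;L^2(\Omega))}
  \lesssim \nm{u_h-\widetilde U}_{L^2(0,T;L^2(\Omega))}.
\]
Feeding this $L^2$-bound back into the energy inequality above (with $j=J$) controls the remaining $\dot H^1$ term,
\[
  \nm{E}_{L^2(0,T;\dot H^1(\Omega))}
  \lesssim \nm{u_h-\widetilde U}_{L^2(0,T;L^2(\Omega))}
  \leqslant \nm{u_h-\widetilde U}_{L^2(0,T;\dot H^1(\Omega))}.
\]

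Finally, the triangle inequality $\nm{u_h-U}_X \leqslant \nm{u_h-\widetilde U}_X + \nm{E}_X$ in each of the three norms $X$, combined with the three estimates just derived for $E$, produces \cref{eq:uh-U-L2,eq:uh-U-H1,eq:uh-U-frac}. The main technical obstacle I anticipate is verifying cleanly that \cref{lem:coer} applies to the piecewise-constant-in-time test function $E\,\mathbb{1}_{(0,t_j)}$ on the interval $(0,t_j)$ (so that the coercivity bound holds with a uniform constant in $j$), and confirming that the constant-free dependence in \cref{lem:gronwall-discr} is compatible with the standing smallness assumption on $\tau$; once these points are in place, all remaining steps are routine.
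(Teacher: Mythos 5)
Your proposal is correct and follows essentially the same route as the paper: the same auxiliary quantity $U-\widetilde U$, the same energy identity tested on $(0,t_j)$ with \cref{lem:coer}, the discrete Gr\"onwall inequality of \cref{lem:gronwall-discr}, \cref{lem:regu-basic} to recover the ${}_0H^{\alpha/2}$ and $L^2$ bounds, and a final triangle inequality. The only small point left tacit is that for \cref{eq:uh-U-frac} you also need $\nm{u_h-\widetilde U}_{L^2(0,T;L^2(\Omega))} \lesssim \nm{u_h-\widetilde U}_{{}_0H^{\alpha/2}(0,T;L^2(\Omega))}$ (the analogue of the Poincar\'e step you wrote for the $\dot H^1$ case), which follows from the same fractional-integration bound you already invoke and is exactly how the paper closes the argument.
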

\begin{proof}
  Let $ \theta := U - \widetilde U $. For any $ 1 \leqslant j \leqslant J $, from
  \cref{eq:numer_sol,eq:wt-U} we obtain
  \begin{align*}
    & \dual{\D_{0+}^\alpha \theta, \theta}_{\Omega \times (0,t_j)} +
    \nm{\theta}_{L^2(0,t_j;\dot H^1(\Omega))}^2 \\
    ={} &
    \dual{f(U) - f(u), \theta}_{\Omega \times (0,t_j)} \\
    \lesssim{} &
    \nm{u-U}_{L^2(0,t_j;L^2(\Omega))} \nm{\theta}_{L^2(0,t_j;L^2(\Omega))} \\
    \lesssim{} &
    \nm{u-\widetilde U}_{L^2(0,T;L^2(\Omega))}^2 +
    \nm{\theta}_{L^2(0,t_j;L^2(\Omega))}^2.
  \end{align*}
  From \cref{lem:coer} it follows that
  \begin{align*}
    & \nm{\D_{0+}^{\alpha/2}\theta}_{L^2(0,t_j;L^2(\Omega))}^2 +
    \nm{\theta}_{L^2(0,t_j;\dot H^1(\Omega))}^2 \\
    \lesssim{} &
    \nm{u-\widetilde U}_{L^2(0,T;L^2(\Omega))}^2 +
    \nm{\theta}_{L^2(0,t_j;L^2(\Omega))}^2
  \end{align*}
  for all $ 1 \leqslant j \leqslant J $, and hence \cref{lem:gronwall-discr} implies
  \begin{align*}
    \nm{\D_{0+}^{\alpha/2}\theta}_{L^2(0,T;L^2(\Omega))} \lesssim
    \nm{u-\widetilde U}_{L^2(0,T;L^2(\Omega))}.
  \end{align*}
  In addition,
  \[
    \nm{\theta}_{L^2(0,T;L^2(\Omega))} =
    \nm{
      \D_{0+}^{-\alpha/2} \D_{0+}^{\alpha/2} \theta
    }_{L^2(0,T;L^2(\Omega))} \lesssim
    \nm{\D_{0+}^{\alpha/2}\theta}_{L^2(0,T;L^2(\Omega))}.
  \]
  Consequently, combining the above three estimates yields
  \begin{align*}
    & \nm{\D_{0+}^{\alpha/2}\theta}_{L^2(0,T;L^2(\Omega))} +
    \nm{\theta}_{L^2(0,T;\dot H^1(\Omega))} +
    \nm{\theta}_{L^2(0,T;L^2(\Omega))} \\
    \lesssim{} &
    \nm{u_h-\widetilde U}_{L^2(0,T;L^2(\Omega))}.
  \end{align*}
  Therefore, by \cref{lem:regu-basic} and the estimate
  \begin{small}
  \[
    \nm{u_h-\widetilde U}_{L^2(0,T;L^2(\Omega))}
    \lesssim \min\big\{
      \nm{u_h - \widetilde U}_{{}_0H^{\alpha/2}(0,T;L^2(\Omega))},
      \nm{u_h - \widetilde U}_{L^2(0,T;\dot H^1(\Omega))}
    \big\},
  \]
  \end{small}
  we readily obtain \cref{eq:uh-U-L2,eq:uh-U-H1,eq:uh-U-frac}. This completes the
  proof.
\end{proof}


\medskip\noindent{\bf Proof of \cref{thm:uh-U}.} Let $ P_\tau $ be defined by
  \cref{eq:P_tau}. By \cref{eq:u_h,eq:wt-U}, a standard energy argument gives
  \begin{small}
  \begin{align*}
    & \nm{u_h \!-\! \widetilde U}_{{}_0H^{\alpha/2}(0,T;L^2(\Omega))}
    \lesssim \nm{(I\!-\!P_\tau)u_h}_{{}_0H^{\alpha/2}(0,T;L^2(\Omega))}, \\
    & \nm{u_h \!-\!\widetilde U}_{L^2(0,T;\dot H^1(\Omega))}
    \lesssim \nm{(I\!-\!P_\tau)u_h}_{{}_0H^{\alpha/2}(0,T;L^2(\Omega))} +
    \nm{(I\!-\!P_\tau)u_h}_{L^2(0,T;\dot H^1(\Omega))},
  \end{align*}
  \end{small}
  and then a duality argument yields
  \begin{small}
  \begin{align*}
    & \nm{u_h - \widetilde U}_{L^2(0,T;L^2(\Omega))} \\
    \lesssim{} &
    J^{-\alpha/2} \big(
      \nm{u_h - \widetilde U}_{{}_0H^{\alpha/2}(0,T;L^2(\Omega))} +
      \nm{u_h - \widetilde U}_{L^2(0,T;\dot H^1(\Omega))}
    \big) \\
    \lesssim{} &
    J^{-\alpha/2} \big(
      \nm{(I-P_\tau)u_h}_{{}_0H^{\alpha/2}(0,T;L^2(\Omega))} +
      \nm{(I-P_\tau)u_h}_{L^2(0,T;\dot H^1(\Omega))}
    \big).
  \end{align*}
  \end{small}
  From \cref{lem:lxy} it follows that
  \begin{small}
  \begin{align*}
    & \nm{u_h - \widetilde U}_{{}_0H^{\alpha/2}(0,T;L^2(\Omega))} +
    \nm{u_h - \widetilde U}_{L^2(0,T;\dot H^1(\Omega))} \\
    & \quad {} +
    J^{\alpha/2} \nm{u_h - \widetilde U}_{L^2(0,T;L^2(\Omega))} \\
    \lesssim{} &
    \big( \eta_1(\alpha,\sigma,J) + \eta_2(\alpha,\sigma,J) \big)
    \nm{P_h u_0}_{\dot H^1(\Omega)},
  \end{align*}
  \end{small}
and hence by \cref{lem:uh-U} we obtain \cref{eq:uh-U-2}. Since \cref{eq:uh-U-1} can
  be proved analogously, this completes the proof.
\hfill\ensuremath{\blacksquare}

\section{Numerical results}
\label{sec:numer}
This section performs three numerical experiments in one dimensional space to verify
the theoretical results. Throughout this section, $ \Omega := (0,1) $, $ T := 1 $, $
f(s) := \sqrt{1+s^2} $ for all $ s \in \mathbb R $, and the spatial triangulation $
\mathcal K_h $ is uniform. Define
\begin{align*}
  \mathcal E_0 &:= \lim_{t \to {T-}} \nm{(U-U^*)(t)}_{\dot H^1(\Omega))}, \\
  \mathcal E_1 &:= \sqrt{
    \dual{\D_{0+}^\alpha(U-U^*), U-U^*}_{\Omega \times (0,T)}
  }, \\
  \mathcal E_2 &:= \nm{U-U^*}_{L^2(0,T;\dot H^1(\Omega))}, \\
  \mathcal E_3 &:= \nm{U-U^*}_{L^2(0,T;L^2(\Omega))},
\end{align*}
where $ U^* $, a reference solution, is the numerical solution with $ h = 2^{-11} $,
$ J = 2^{-16} $ and $ \sigma = 2.2 $. Additionally, the nonlinear systems arising in
the following numerical experiments are solved by the famous Newton's method, and the
stopping criterion is that the $ l^2 $-norm of the residual is less than $
1\mathrm{e-}\!13 $.

\medskip\noindent{\bf Experiment 1.} This experiment is to verify
\cref{thm:space} in the case that
\[
  u_0(x) := x^{-0.49}, \quad 0 < x < 1.
\]
By \cref{thm:space} we have the following predictions:
\begin{itemize}
\item $ \mathcal E_0 $ is close to $
O(h^2) $;

\item $ \mathcal E_1 $ is close to $ O(h^2) $ for $ \alpha \in \{0.2, 1/3\} $ and
close to $ O(h^{0.25}) $ for $ \alpha=0.8 $;

\item $ \mathcal E_2 $ is close to $ O(h) $
for $ \alpha \in \{0.1,0.5\} $ and close to $ O(h^{0.25}) $ for $ \alpha=0.8 $;

\item $
\mathcal E_3 $ is close to $ O(h^2) $ for $ \alpha \in \{0.1,0.5\} $ and close to $
O(h^{1.25}) $ for $ \alpha=0.8 $.
\end{itemize}
These predictions are verified by
\cref{tab:space-E0,tab:space-E1,tab:space-E23}.

\noindent
\begin{minipage}{\textwidth}
  \begin{minipage}[t]{0.49\textwidth}
    \begin{table}[H]
      \footnotesize \setlength{\tabcolsep}{0.5pt}
      \caption{$ J = 2^{-16} $, $ \sigma = 2.2 $}
      \label{tab:space-E0}
      \begin{tabular}{ccccccc}
        \toprule &
        \multicolumn{2}{c}{$\alpha=0.1$} &
        \multicolumn{2}{c}{$\alpha=0.5$} &
        \multicolumn{2}{c}{$\alpha=0.8$} \\
        \cmidrule(r){2-3} \cmidrule(r){4-5} \cmidrule(r){6-7}
        $h$ & $\mathcal E_0$ & Order & $\mathcal E_0$ & Order & $\mathcal E_0$ & Order \\
        $2^{-3}$ & 3.78e-3 & --   & 2.94e-3 & --   & 2.11e-3 & --   \\
        $2^{-4}$ & 9.86e-4 & 1.94 & 7.55e-4 & 1.96 & 5.33e-4 & 1.98 \\
        $2^{-5}$ & 2.56e-4 & 1.95 & 1.94e-4 & 1.96 & 1.34e-4 & 1.99 \\
        $2^{-6}$ & 6.61e-5 & 1.95 & 4.94e-5 & 1.97 & 3.38e-5 & 1.99 \\
        \bottomrule
      \end{tabular}
    \end{table}
  \end{minipage}
  \begin{minipage}[t]{0.5\textwidth}
    \begin{table}[H]
      \footnotesize \setlength{\tabcolsep}{0.5pt}
      \caption{$ J = 2^{-16} $, $ \sigma = 2.2 $}
      \label{tab:space-E1}
      \begin{tabular}{ccccccc}
        \toprule
        & \multicolumn{2}{c}{$\alpha=0.2$}
        & \multicolumn{2}{c}{$\alpha=1/3$}
        & \multicolumn{2}{c}{$\alpha=0.8$} \\
        \cmidrule(r){2-3} \cmidrule(r){4-5} \cmidrule(r){6-7}
        $ h $ & $ \mathcal E_1 $ & Order & $ \mathcal E_1 $ & Order & $ \mathcal E_1 $ &
        Order \\
        $ 2^{-3} $ & 4.41e-3 & --   & 5.86e-3 & --   & 4.37e-1 & --   \\
        $ 2^{-4} $ & 1.16e-3 & 1.93 & 1.64e-3 & 1.83 & 3.58e-1 & 0.28 \\
        $ 2^{-5} $ & 3.03e-4 & 1.94 & 4.55e-4 & 1.85 & 2.93e-1 & 0.29 \\
        $ 2^{-6} $ & 7.86e-5 & 1.94 & 1.24e-4 & 1.87 & 2.37e-1 & 0.31 \\
        \bottomrule
      \end{tabular}
    \end{table}
  \end{minipage}
	\begin{table}[H]
		\footnotesize \setlength{\tabcolsep}{1.0pt}
		\caption{$ J = 2^{-16} $, $ \sigma = 2.2 $}
		\label{tab:space-E23}
		\begin{tabular}{ccccccccccccc}
			\toprule &
			\multicolumn{4}{c}{$\alpha=0.1$} &
			\multicolumn{4}{c}{$\alpha=0.5$} &
			\multicolumn{4}{c}{$\alpha=0.8$} \\
			\cmidrule(r){2-5}
			\cmidrule(r){6-9}
			\cmidrule(r){10-13}
			$h$      & $\mathcal E_2$ & Order & $\mathcal E_3$ & Order & $\mathcal E_2$ & Order & $\mathcal E_3$ & Order & $\mathcal E_2$ & Order & $\mathcal E_3$ & Order \\
			$2^{-3}$ & 1.07e-1        & --    & 4.04e-3        & --    & 1.49e-1        & --    & 5.42e-3        & --    & 5.40e-1        & --    & 1.42e-2        & --    \\
			$2^{-4}$ & 5.57e-2        & 0.94  & 1.06e-3        & 1.93  & 8.26e-2        & 0.85  & 1.51e-3        & 1.84  & 4.40e-1        & 0.29  & 5.82e-3        & 1.29  \\
			$2^{-5}$ & 2.89e-2        & 0.95  & 2.75e-4        & 1.94  & 4.54e-2        & 0.86  & 4.18e-4        & 1.85  & 3.59e-1        & 0.29  & 2.41e-3        & 1.27  \\
			$2^{-6}$ & 1.49e-2        & 0.95  & 7.13e-5        & 1.95  & 2.47e-2        & 0.88  & 1.15e-4        & 1.87  & 2.92e-1        & 0.30  & 9.99e-4        & 1.27  \\
			\bottomrule
		\end{tabular}
	\end{table}
\end{minipage}

\medskip\noindent{\bf Experiment 2.} This experiment is to verify
error estimate \cref{eq:uh-U-1} in the case that
\[
  u_0(x) := x^{-0.49}, \quad 0 < x < 1.
\]
The numerical results displayed in \cref{tab:Ex2} illustrate that $ \mathcal E_1 $, $
\mathcal E_2 $ and $ \mathcal E_3 $ are close to $ O(J^{-(1-\alpha)/2}) $, $
O(J^{-(1-\alpha)/2}) $ and $ O(J^{-1/2}) $, respectively, which agrees well with
\cref{eq:uh-U-1}.
\begin{table}[H]
  \footnotesize
  \setlength{\tabcolsep}{6pt}
  \caption{$ h = 2^{-11} $, $ \sigma=1 $}
  \label{tab:Ex2}
  \begin{tabular}{cccccccc}
    \toprule
		& $J$ & $\mathcal E_1$ & Order & $\mathcal E_2$ & Order & $\mathcal E_3$ & Order \\
    \midrule
    \multirow{4}{*}{$\alpha=0.2$}
    & $2^9$    & 6.61e-3 & --   & 9.52e-3 & --   & 2.71e-3 & --   \\
    & $2^{10}$ & 5.24e-3 & 0.34 & 7.18e-3 & 0.41 & 2.01e-3 & 0.43 \\
    & $2^{11}$ & 4.12e-3 & 0.35 & 5.39e-3 & 0.41 & 1.48e-3 & 0.44 \\
    & $2^{12}$ & 3.21e-3 & 0.36 & 4.02e-3 & 0.42 & 1.08e-3 & 0.45 \\
    \midrule
    \multirow{4}{*}{$\alpha=0.5$}
    & $2^5$    & 1.42e-1 & --   & 1.45e-1 & --   & 3.22e-2 & --   \\
    & $2^6$    & 1.18e-1 & 0.27 & 1.17e-1 & 0.30 & 2.28e-2 & 0.50 \\
    & $2^7$    & 9.66e-2 & 0.29 & 9.60e-2 & 0.29 & 1.57e-2 & 0.54 \\
    & $2^8$    & 7.88e-2 & 0.29 & 7.97e-2 & 0.27 & 1.06e-2 & 0.56 \\
    \midrule
    \multirow{4}{*}{$\alpha=0.8$}
    & $2^5$    & 6.03e-1 & --   & 5.76e-1 & --   & 5.01e-2 & --   \\
    & $2^6$    & 5.54e-1 & 0.12 & 5.34e-1 & 0.11 & 3.42e-2 & 0.55 \\
    & $2^7$    & 5.12e-1 & 0.12 & 4.94e-1 & 0.11 & 2.37e-2 & 0.53 \\
    & $2^8$    & 4.73e-1 & 0.11 & 4.57e-1 & 0.11 & 1.66e-2 & 0.51 \\
    \bottomrule
  \end{tabular}
\end{table}

\medskip\noindent{\bf Experiment 3.} This experiment is to verify
 estimate \cref{eq:uh-U-2} in the case that
\[
  u_0(x) := x^{0.51}(1-x), \quad 0 < x < 1.
\]
Note that $ u_0 \in \dot H^{1.01-\epsilon}(\Omega) $ for all $ \epsilon > 0 $. We
summarize the corresponding numerical results as follows.
\begin{itemize}
  \item Estimate \cref{eq:uh-U-2} implies that $ \mathcal E_1 $ is close to $
    O(J^{-1/2}) $ for $ \sigma=1 $ and close to $ O(J^{-(1-\alpha/2)}) $ for $ \sigma
    \in \{2-\alpha,2\} $. For $ \alpha \in \{0.5,0.8\} $, the numerical results in
    \cref{tab:Ex3-E1} agree well with the theoretical results. For $ \alpha=0.2 $,
    the numerical results in \cref{tab:Ex3-E1} appear not to be in good agreement
    with the theoretical results. However, in our opinion this is caused by the
    limitation of the experiment: $ J $ can not be sufficiently large, or   it will
   leads to numerical instability.
  \item \cref{tab:Ex3-E2} illustrates that $ \mathcal E_2 $ is close to $
    O(J^{-\sigma/2}) $ for $ 1 \leqslant \sigma \leqslant 2$; however,  estimate
    \cref{eq:uh-U-2} only implies that $ \mathcal E_2 $ is close to $
    O(J^{-\sigma/2}) $ for $ 1 \leqslant \sigma \leqslant 2-\alpha $ and close to $
    O(J^{-(1-\alpha/2)}) $ for $ 2-\alpha < \sigma \leqslant 3 $. This phenomenon
    needs further analysis.
  \item \cref{tab:Ex3-E3} confirms the theoretical prediction that $ \mathcal E_3 $
    is close to $ O(J^{-(1+\alpha)/2}) $ for $ \sigma=1 $ and close to $ O(J^{-1}) $
    for $ \sigma \in \{2-\alpha,2\} $.
\end{itemize}

\begin{table}[H]
  \footnotesize
  \setlength{\tabcolsep}{2pt}
  \caption{$ h = 2^{-11} $}
  \label{tab:Ex3-E1}
  \begin{tabular}{cccccccc}
    \toprule & &
    \multicolumn{2}{c}{$\sigma=1$} &
    \multicolumn{2}{c}{$\sigma=2-\alpha$} &
    \multicolumn{2}{c}{$\sigma=2$} \\
    \cmidrule(r){3-4}
    \cmidrule(r){5-6}
    \cmidrule(r){7-8}
    & $J$ & $\mathcal E_1$ & Order & $\mathcal E_1$ & Order & $\mathcal E_1$ & Order \\
    \midrule
    \multirow{4}{*}{$\alpha=0.2$}
    & $2^{11}$ & 5.54e-4 & --   & 6.45e-5 & --   & 4.10e-5 & --   \\
    & $2^{12}$ & 4.31e-4 & 0.36 & 3.84e-5 & 0.75 & 2.33e-5 & 0.81 \\
    & $2^{13}$ & 3.33e-4 & 0.37 & 2.25e-5 & 0.77 & 1.31e-5 & 0.83 \\
    & $2^{14}$ & 2.55e-4 & 0.38 & 1.31e-5 & 0.78 & 7.39e-6 & 0.83 \\
    \midrule
    \multirow{4}{*}{$\alpha=0.5$}
    & $2^9$    & 6.35e-3 & --   & 2.02e-3 & --   & 1.01e-3 & --   \\
    & $2^{10}$ & 4.64e-3 & 0.45 & 1.28e-3 & 0.66 & 6.06e-4 & 0.73 \\
    & $2^{11}$ & 3.36e-3 & 0.47 & 8.07e-4 & 0.67 & 3.64e-4 & 0.74 \\
    & $2^{12}$ & 2.41e-3 & 0.48 & 5.04e-4 & 0.68 & 2.18e-4 & 0.74 \\
    \midrule
    \multirow{4}{*}{$\alpha=0.8$}
    & $2^9$    & 1.32e-2 & --   & 8.88e-3 & --   & 4.16e-3 & --   \\
    & $2^{10}$ & 9.53e-3 & 0.47 & 6.16e-3 & 0.53 & 2.76e-3 & 0.59 \\
    & $2^{11}$ & 6.86e-3 & 0.47 & 4.25e-3 & 0.53 & 1.83e-3 & 0.59 \\
    & $2^{12}$ & 4.91e-3 & 0.48 & 2.92e-3 & 0.54 & 1.22e-3 & 0.59 \\
    \bottomrule
  \end{tabular}
\end{table}
\begin{table}[H]
  \footnotesize
  \setlength{\tabcolsep}{2pt}
  \caption{$ h = 2^{-11} $}
  \label{tab:Ex3-E2}
  \begin{tabular}{cccccccc}
    \toprule & &
    \multicolumn{2}{c}{$\sigma=1$} &
    \multicolumn{2}{c}{$ \sigma=2-\alpha $} &
    \multicolumn{2}{c}{$\sigma=2$} \\
    \cmidrule(r){3-4}
    \cmidrule(r){5-6}
    \cmidrule(r){7-8}
    & $J$ & $\mathcal E_2$ & Order & $\mathcal E_2$ & Order & $\mathcal E_2$ & Order \\
    \midrule
    \multirow{4}{*}{$\alpha=0.2$}
    & $2^{10}$ & 8.90e-4 & --   & 9.29e-5 & --   & 5.97e-5 & --   \\
    & $2^{11}$ & 6.59e-4 & 0.43 & 5.18e-5 & 0.84 & 3.18e-5 & 0.91 \\
    & $2^{12}$ & 4.84e-4 & 0.44 & 2.86e-5 & 0.86 & 1.68e-5 & 0.92 \\
    & $2^{13}$ & 3.53e-4 & 0.45 & 1.57e-5 & 0.87 & 8.79e-6 & 0.93 \\
    \midrule
    \multirow{4}{*}{$\alpha=0.5$}
    & $2^8$    & 5.43e-3 & --   & 1.53e-3 & --   & 6.19e-4 & --   \\
    & $2^9$    & 3.79e-3 & 0.52 & 9.05e-4 & 0.76 & 3.25e-4 & 0.93 \\
    & $2^{10}$ & 2.65e-3 & 0.52 & 5.34e-4 & 0.76 & 1.70e-4 & 0.94 \\
    & $2^{11}$ & 1.85e-3 & 0.52 & 3.14e-4 & 0.76 & 8.82e-5 & 0.94 \\
    \midrule
    \multirow{4}{*}{$\alpha=0.8$}
    & $2^8$    & 8.75e-3 & --   & 5.16e-3 & --   & 1.12e-3 & --   \\
    & $2^9$    & 6.08e-3 & 0.52 & 3.35e-3 & 0.62 & 5.83e-4 & 0.94 \\
    & $2^{10}$ & 4.24e-3 & 0.52 & 2.18e-3 & 0.62 & 3.02e-4 & 0.95 \\
    & $2^{11}$ & 2.95e-3 & 0.52 & 1.42e-3 & 0.62 & 1.56e-4 & 0.95 \\
    \bottomrule
  \end{tabular}
\end{table}
\begin{table}[H]
  \footnotesize
  \setlength{\tabcolsep}{2pt}
  \caption{$ h = 2^{-11} $}
  \label{tab:Ex3-E3}
  \begin{tabular}{cccccccc}
    \toprule & &
    \multicolumn{2}{c}{$\sigma=1$} &
    \multicolumn{2}{c}{$\sigma=2-\alpha$} &
    \multicolumn{2}{c}{$\sigma=2$} \\
    \cmidrule(r){3-4}
    \cmidrule(r){5-6}
    \cmidrule(r){7-8}
    & $J$ & $\mathcal E_3$ & Order & $\mathcal E_3$ & Order & $\mathcal E_3$ & Order \\
    \midrule
    \multirow{4}{*}{$\alpha=0.2$}
    & $2^{11}$ & 1.99e-4 & --   & 1.46e-5 & --   & 9.01e-6 & --   \\
    & $2^{12}$ & 1.45e-4 & 0.45 & 7.83e-6 & 0.90 & 4.65e-6 & 0.95 \\
    & $2^{13}$ & 1.05e-4 & 0.47 & 4.16e-6 & 0.91 & 2.39e-6 & 0.96 \\
    & $2^{14}$ & 7.55e-5 & 0.48 & 2.19e-6 & 0.93 & 1.23e-6 & 0.96 \\
    \midrule
    \multirow{4}{*}{$\alpha=0.5$}
    & $2^8$    & 1.30e-3 & --   & 2.87e-4 & --   & 1.35e-4 & --   \\
    & $2^9$    & 8.16e-4 & 0.67 & 1.49e-4 & 0.95 & 6.74e-5 & 1.00 \\
    & $2^{10}$ & 5.04e-4 & 0.69 & 7.64e-5 & 0.96 & 3.37e-5 & 1.00 \\
    & $2^{11}$ & 3.07e-4 & 0.71 & 3.90e-5 & 0.97 & 1.68e-5 & 1.00 \\
    \midrule
    \multirow{4}{*}{$\alpha=0.8$}
    & $2^8$    & 1.08e-3 & --   & 5.48e-4 & --   & 1.95e-4 & --   \\
    & $2^9$    & 5.98e-4 & 0.86 & 2.81e-4 & 0.96 & 9.65e-5 & 1.02 \\
    & $2^{10}$ & 3.27e-4 & 0.87 & 1.43e-4 & 0.97 & 4.78e-5 & 1.02 \\
    & $2^{11}$ & 1.77e-4 & 0.88 & 7.25e-5 & 0.98 & 2.37e-5 & 1.01 \\
    \bottomrule
  \end{tabular}
\end{table}

\appendix
\section{Regularity of an initial value problem}
For any $ \beta >0 $, define the Mittag-Leffler function $ E_{\alpha,\beta}(z) $ by
\[
  E_{\alpha,\beta}(z) := \sum_{i=0}^\infty
  \frac{z^i}{\Gamma(i\alpha + \beta)}, \quad z \in \mathbb C,
\]
and this function admits the following growth estimate \cite{Podlubny1998}:
\begin{equation}
  \label{eq:ml_grow}
  \snm{E_{\alpha,\beta}(-t)} \leqslant
  \frac{C_{\alpha,\beta}}{1+t}
  \quad \forall t > 0.
\end{equation}
For any $ v \in L^2(\Omega) $, a routine calculation gives that \cite{Jin2013SIAM}
\begin{equation}
  \label{eq:SDu0}
  (S\D_{0+}^\alpha v)(t) = \sum_{k=0}^\infty
  E_{\alpha,1}(-\lambda_kt^\alpha) \dual{v,\phi_k}_\Omega \phi_k,
  \quad 0 < t \leqslant T.
\end{equation}
By the above two equations, a straightforward calculation gives the following lemma.
\begin{lem}
  \label{lem:regu-u0-growth}
  If $ v \in L^2(\Omega) $, then
  \[
    S\D_{0+}^\alpha v \in C([0,T];L^2(\Omega))
    \text{ with } (S\D_{0+}^\alpha v)(0) = v
  \]
  and
  \[
    \Dual{
      \D_{0+}^\alpha ( S\D_{0+}^\alpha v - v ), w
    }_{\Omega \times (0,T)} +
    \dual{\nabla S\D_{0+}^\alpha v,\nabla w}_{\Omega \times (0,T)} = 0
  \]
  for all $ w \in {}^0H^{\alpha/2}(0,T;L^2(\Omega)) \cap L^2(0,T;\dot H^1(\Omega)) $.
  If $ v \in \dot H^{2\delta}(\Omega) $ with $ 0 \leqslant \delta \leqslant 1 $,
  then
  \begin{small}
  \begin{equation}
    \label{eq:u0-L2-growth}
    \nm{(S\D_{0+}^\alpha v)(t)}_{L^2(\Omega)} +
    t^{1-\alpha\delta} \nm{(S\D_{0+}^\alpha v)'(t)}_{L^2(\Omega)} \\
    \leqslant C_\alpha \nm{v}_{\dot H^{2\delta}(\Omega)},
    \quad t > 0.
  \end{equation}
  \end{small}
  If $ v \in \dot H^{2\delta}(\Omega) $ with $ 1/2 \leqslant \delta \leqslant 1 $,
  then
  \begin{equation}
    \label{eq:u0-h1-growth}
    t^{1-\alpha(\delta-1/2)} \nm{(S\D_{0+}^\alpha v)'(t)}_{\dot H^1(\Omega)}
    \leqslant C_\alpha \nm{v}_{\dot H^{2\delta}(\Omega)},
    \quad t > 0.
  \end{equation}
\end{lem}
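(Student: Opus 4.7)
The plan is to work entirely with the spectral representation \cref{eq:SDu0} and the Mittag-Leffler bound \cref{eq:ml_grow}, reducing everything to term-wise estimates on the coefficients $E_{\alpha,1}(-\lambda_k t^\alpha) \langle v,\phi_k\rangle_\Omega$. First, setting $v_k := \langle v,\phi_k\rangle_\Omega$, I would write
\[
  (S\D_{0+}^\alpha v)(t) = \sum_{k=0}^\infty E_{\alpha,1}(-\lambda_k t^\alpha) v_k \phi_k,
\]
note that $E_{\alpha,1}(0)=1$, and combine the uniform bound $|E_{\alpha,1}(-\lambda_k t^\alpha)|\leqslant C_\alpha$ with Parseval and dominated convergence to obtain $S\D_{0+}^\alpha v\in C([0,T];L^2(\Omega))$ with initial value $v$, which also yields $\|(S\D_{0+}^\alpha v)(t)\|_{L^2(\Omega)}\leqslant C_\alpha \|v\|_{L^2(\Omega)}$ and hence the first summand in \cref{eq:u0-L2-growth}.

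For the weak formulation, the key observation is the classical identity
\[
  \D_{0+}^\alpha\bigl(E_{\alpha,1}(-\lambda_k t^\alpha)-1\bigr) = -\lambda_k E_{\alpha,1}(-\lambda_k t^\alpha),
\]
which, summed against $v_k \phi_k$, gives $\D_{0+}^\alpha(S\D_{0+}^\alpha v - v)(t) = \Delta (S\D_{0+}^\alpha v)(t)$ mode-by-mode; integration by parts in space against an arbitrary test function $w\in {}^0H^{\alpha/2}(0,T;L^2(\Omega))\cap L^2(0,T;\dot H^1(\Omega))$ then yields the stated identity. The subtle point here is to justify termination/convergence of the series in the appropriate duality pairing, which follows from the decay of the Mittag-Leffler functions and the spatial regularity of $w$.

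The derivative estimates rest on the identity
\[
  \frac{d}{dt}E_{\alpha,1}(-\lambda_k t^\alpha) = -\lambda_k t^{\alpha-1} E_{\alpha,\alpha}(-\lambda_k t^\alpha),
\]
so that, by Parseval,
\[
  t^{2(1-\alpha\delta)}\bigl\|(S\D_{0+}^\alpha v)'(t)\bigr\|_{L^2(\Omega)}^2
  = \sum_{k} \lambda_k^2 t^{2\alpha(1-\delta)} \bigl|E_{\alpha,\alpha}(-\lambda_k t^\alpha)\bigr|^2 v_k^2.
\]
The core algebraic trick I would use is the factorization
\[
  \lambda_k t^{\alpha(1-\delta)} \bigl|E_{\alpha,\alpha}(-\lambda_k t^\alpha)\bigr|
  \leqslant C_\alpha\,\lambda_k^{\delta}\cdot\frac{(\lambda_k t^\alpha)^{1-\delta}}{1+\lambda_k t^\alpha}
  \leqslant C_\alpha\,\lambda_k^{\delta},
\]
valid for $0\leqslant\delta\leqslant 1$ since then the exponent $1-\delta\in[0,1]$ keeps $x^{1-\delta}/(1+x)$ bounded on $[0,\infty)$. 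Plugging this in delivers $t^{1-\alpha\delta}\|(S\D_{0+}^\alpha v)'(t)\|_{L^2(\Omega)}\leqslant C_\alpha\|v\|_{\dot H^{2\delta}(\Omega)}$, which is \cref{eq:u0-L2-growth}. For \cref{eq:u0-h1-growth} the same argument produces an extra $\lambda_k^{1/2}$ inside the square root, and I would use the analogous bound
\[
  \lambda_k^{3/2}\, t^{\alpha(3/2-\delta)}\bigl|E_{\alpha,\alpha}(-\lambda_k t^\alpha)\bigr|
  \leqslant C_\alpha\,\lambda_k^{\delta}\cdot\frac{(\lambda_k t^\alpha)^{3/2-\delta}}{1+\lambda_k t^\alpha}
  \leqslant C_\alpha\,\lambda_k^{\delta},
\]
which is precisely where the hypothesis $\delta\in[1/2,1]$ enters, since it forces $3/2-\delta\in[1/2,1]$ so that the factor $x^{3/2-\delta}/(1+x)$ remains uniformly bounded. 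The main obstacle is not any single step but bookkeeping the exponents so that the powers of $\lambda_k$ and $t$ land correctly, and making sure the range of $\delta$ in each estimate is exactly what is needed to keep the auxiliary functions of $\lambda_k t^\alpha$ bounded.
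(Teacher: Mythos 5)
Your proposal is correct and follows essentially the same route as the paper, which simply notes that the lemma follows "by a straightforward calculation" from the spectral representation \cref{eq:SDu0} and the Mittag-Leffler growth bound \cref{eq:ml_grow}; your term-wise estimates (including the exponent bookkeeping that pins down the admissible ranges of $\delta$) are exactly that calculation, carried out correctly.
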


\begin{lem}
  \label{lem:regu-u0}
  Assume that $ v \in L^2(\Omega) $. For any $ 0 < \epsilon < 1/2 $,
  \begin{equation}
    \label{eq:regu-u0-11}
    \nm{S\D_{0+}^\alpha v}_{{}_0H^{1/2-\epsilon}(0,T;L^2(\Omega))}
    \leqslant \frac{C_{\alpha,T,\Omega}}{\sqrt{\epsilon(1-2\epsilon)}}
    \nm{v}_{L^2(\Omega)}.
  \end{equation}
  If $ 0 < \alpha < 1/3 $, then
  \begin{equation}
    \label{eq:regu-u0-1}
    \nm{S\D_{0+}^\alpha v}_{
      {}_0H^{\alpha/2}(0,T;\dot H^2(\Omega))
    } \leqslant C_{\alpha,T,\Omega}
    \nm{v}_{L^2(\Omega)}.
  \end{equation}
  If $ \alpha=1/3 $, then, for any $ 0 < \epsilon < 1 $,
  \begin{equation}
    \label{eq:regu-u0-2}
    \nm{S\D_{0+}^\alpha v}_{
      {}_0H^{\alpha/2}(0,T;\dot H^{2-\epsilon}(\Omega))
    } \leqslant C_{\alpha,T,\Omega} \epsilon^{-1/2}
    \nm{v}_{L^2(\Omega)}.
  \end{equation}
  If $ 1/3 < \alpha < 1 $, then
  \begin{equation}
    \label{eq:regu-u0-3}
    \nm{S\D_{0+}^\alpha v}_{
      {}_0H^{\alpha/2}(0,T;\dot H^{1/\alpha-1}(\Omega))
    } \leqslant C_{\alpha,T,\Omega}
    \nm{v}_{L^2(\Omega)}.
  \end{equation}
  If $ 0 < \alpha < 1/2 $, then
  \begin{equation}
    \label{eq:regu-u0-4}
    \nm{S\D_{0+}^\alpha v}_{
      L^2(0,T;\dot H^2(\Omega))
    } \leqslant C_{\alpha,T,\Omega} \nm{v}_{L^2(\Omega)}.
  \end{equation}
  If $ \alpha=1/2 $, then, for any $ 0 < \epsilon < 1 $,
  \begin{equation}
    \label{eq:regu-u0-5}
    \nm{S\D_{0+}^\alpha v}_{
      L^2(0,T;\dot H^{2-\epsilon}(\Omega))
    } \leqslant C_{\alpha,T,\Omega}
    \epsilon^{-1/2} \nm{v}_{L^2(\Omega)}.
  \end{equation}
  If $ 1/2 < \alpha < 1 $, then
  \begin{equation}
    \label{eq:regu-u0-6}
    \nm{S\D_{0+}^\alpha v}_{
      L^2(0,T;\dot H^{1/\alpha}(\Omega))
    } \leqslant C_{\alpha,T,\Omega} \nm{v}_{L^2(\Omega)}.
  \end{equation}
\end{lem}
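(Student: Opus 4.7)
The plan is to prove every estimate in the lemma via a single spectral calculation based on the representation \cref{eq:SDu0}. Expanding $v = \sum_k v_k \phi_k$ with $v_k := \dual{v,\phi_k}_\Omega$, and using that $(-\Delta)^\delta \phi_k = \lambda_k^\delta \phi_k$, every norm in the statement diagonalizes as
\[
  \Nm{(-\Delta)^\delta (S\D_{0+}^\alpha v)(t)}_{L^2(\Omega)}^2 = \sum_k \lambda_k^{2\delta} v_k^2\, E_{\alpha,1}(-\lambda_k t^\alpha)^2,
\]
and similarly after applying a fractional time derivative. Lemma~\ref{lem:regu-basic} then reduces each target Sobolev-in-time norm to an $L^2(0,T;L^2(\Omega))$-norm of a Riemann--Liouville derivative of the series.

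The first main step is the pointwise identity
\[
  \D_{0+}^\gamma\bigl[E_{\alpha,1}(-\lambda_k t^\alpha)\bigr] = t^{-\gamma} E_{\alpha,1-\gamma}(-\lambda_k t^\alpha),
  \qquad 0 < \gamma < 1,
\]
which I would establish by term-by-term differentiation of the Mittag--Leffler series together with $\D_{0+}^\gamma t^{i\alpha} = \Gamma(i\alpha+1)\Gamma(i\alpha+1-\gamma)^{-1} t^{i\alpha-\gamma}$. Combined with \cref{eq:ml_grow} in the form $|E_{\alpha,1-\gamma}(-\lambda_k t^\alpha)|^2 \lesssim (1+\lambda_k t^\alpha)^{-2}$, this reduces every claim to bounding, uniformly in $k$,
\[
  \lambda_k^{2\delta}\, I_k(\gamma) := \lambda_k^{2\delta} \int_0^T \frac{t^{-2\gamma}}{(1+\lambda_k t^\alpha)^2} \, \mathrm{d}t.
\]
The substitution $y = \lambda_k t^\alpha$ gives, up to an $\alpha$-dependent constant,
\[
  I_k(\gamma) = \lambda_k^{(2\gamma-1)/\alpha} \int_0^{\lambda_k T^\alpha} \frac{y^{(1-2\gamma)/\alpha - 1}}{(1+y)^2}\,\mathrm{d}y.
\]

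The case distinctions in the lemma are exactly the regimes in which this beta-function-type integral, taken on $(0,\infty)$, converges, diverges logarithmically, or diverges like a power of $\lambda_k T^\alpha$. For \cref{eq:regu-u0-1,eq:regu-u0-4} (where $\gamma = \alpha/2$ or $\gamma=0$ together with $\alpha < 1/3$ or $\alpha < 1/2$ respectively, and $\delta=1$), the integral is a convergent Beta integral and $\lambda_k^{2\delta} I_k$ is bounded uniformly. For the critical cases \cref{eq:regu-u0-2,eq:regu-u0-5} the integral picks up a $\log(\lambda_k T^\alpha)$ factor; after reducing the spatial exponent by $\epsilon$ the bound $\sup_{x>0} x^{-\epsilon}\log(1+x) \leqslant C/\epsilon$ produces the advertised $\epsilon^{-1/2}$. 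For the supercritical cases \cref{eq:regu-u0-3,eq:regu-u0-6}, the only way to absorb the divergent $\lambda_k^{(2\gamma-1)/\alpha}$ factor is to choose $2\delta = 1/\alpha-1$ (respectively $2\delta = 1/\alpha$), after which $\lambda_k^{2\delta + (2\gamma-1)/\alpha}$ collapses to $\lambda_k^0$, and the sum in $k$ is bounded by $\sum_k v_k^2 = \nm{v}_{L^2(\Omega)}^2$.

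For \cref{eq:regu-u0-11} the setup is slightly different: with $\gamma = 1/2-\epsilon$ and $\delta = 0$, the $\lambda_k = 0$ behaviour of the integrand dominates, and $\int_0^T t^{-1+2\epsilon}\,\mathrm{d}t = T^{2\epsilon}/(2\epsilon)$ is where the $\epsilon^{-1/2}$ factor comes from. The remaining $(1-2\epsilon)^{-1/2}$ factor comes from the equivalence constant in Lemma~\ref{lem:regu-basic} applied at the fractional order $1/2 - \epsilon$; equivalently, from the blow-up of the $K$-method interpolation constant as $\theta \to 1/2^-$ in $[{}_0H^0,{}_0H^1]_{\theta,2}$. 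This careful bookkeeping of the equivalence constant near the Lions--Magenes critical exponent is the step I expect to be most delicate; the remaining estimates are obtained by a uniform case-by-case analysis of the beta-type integral above.
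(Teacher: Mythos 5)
Your proposal is correct and follows essentially the same route as the paper: the spectral representation \cref{eq:SDu0}, the termwise identity $\D_{0+}^\gamma E_{\alpha,1}(-\lambda t^\alpha)=t^{-\gamma}E_{\alpha,1-\gamma}(-\lambda t^\alpha)$, the growth bound \cref{eq:ml_grow}, a uniform-in-$k$ bound of $\lambda_k^{2\delta}\int_0^T t^{-2\gamma}(1+\lambda_k t^\alpha)^{-2}\,\mathrm{d}t$, and \cref{lem:regu-basic} applied to $S\D_{0+}^\alpha v=\D_{0+}^{-\gamma}\D_{0+}^{\gamma}(S\D_{0+}^\alpha v)$ --- exactly the computation the paper carries out for \cref{eq:regu-u0-1} before declaring the remaining cases analogous. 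One small slip in your prose: in the subcritical cases \cref{eq:regu-u0-1,eq:regu-u0-4} the $y$-integral on $(0,\infty)$ is \emph{not} a convergent Beta integral (it converges only for $\alpha>1/3$, respectively $\alpha>1/2$); uniform boundedness there comes instead from the decaying prefactor $\lambda_k^{2\delta+(2\gamma-1)/\alpha}$ cancelling the power growth $(\lambda_k T^\alpha)^{(1-2\gamma)/\alpha-2}$ of the truncated integral, but the product $\lambda_k^{2\delta}I_k(\gamma)$ is indeed uniformly bounded in every case, so the conclusion stands.
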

\begin{proof}
	A straightforward calculation gives
	\begin{align*}
		\D_{0+}^{\alpha/2} (S\D_{0+}^\alpha v)(t) = t^{-\alpha/2}
		\sum_{k=0}^\infty E_{\alpha,1-\alpha/2}(-\lambda_k t^\alpha)
		\dual{v,\phi_k}_\Omega \phi_k.
	\end{align*}
	If $ 0 < \alpha < 1/3 $, then by \cref{eq:ml_grow} we obtain
	\begin{align*}
		& \nm{\D_{0+}^{\alpha/2}(S\D_{0+}^\alpha v)}_{
      L^2(0,T;\dot H^2(\Omega))
    }^2 \\
		\leqslant{} &
		C_\alpha \nm{v}_{L^2(\Omega)}^2
		\sup_{k \in \mathbb N} \int_0^T
		\frac{\lambda_k^2 t^{-\alpha}}{(1+\lambda_k t^\alpha)^2} \, \mathrm{d}t \\
		\leqslant{} &
		C_{\alpha,T,\Omega} \nm{v}_{L^2(\Omega)}^2,
	\end{align*}
	so that using \cref{lem:regu-basic} and the fact
	\[
		S\D_{0+}^\alpha v = \D_{0+}^{-\alpha/2}
    \D_{0+}^{\alpha/2}(S\D_{0+}^\alpha v)
	\]
  yields \cref{eq:regu-u0-1}. Since the rest of this lemma can be proved
  analogously, this completes the proof.
\end{proof}

\section{Two interpolation error estimates}
For any $ v \in L^1(0,T;L^2(\Omega)) $, define $ P_\tau v \in L^1(0,T;L^2(\Omega)) $
by that
\begin{equation}
  \label{eq:P_tau}
  P_\tau v|_{(t_{j-1},t_j)} :=
  \tau_j^{-1} \int_{t_{j-1}}^{t_j} v(t) \, \mathrm{d}t
\end{equation}
for all $ 1 \leqslant j \leqslant J $.

\begin{lem}
  \label{lem:lxy}
  It holds that
  \begin{align}
    \nm{(I-P_\tau)u_h}_{{}_0H^{\alpha/2}(0,T;L^2(\Omega))}
    & \leqslant C_{\alpha,L,T,\Omega} \eta_1(\alpha,\sigma,J)
    \nm{P_hu_0}_{\dot H^1(\Omega)}, \label{eq:lxy-1} \\
    \nm{(I-P_\tau)u_h}_{L^2(0,T;\dot H^1(\Omega))}
    & \leqslant C_{\alpha,L,T,\Omega} \eta_2(\alpha,\sigma,J)
    \nm{P_hu_0}_{\dot H^1(\Omega)}. \label{eq:lxy-2}
  \end{align}
\end{lem}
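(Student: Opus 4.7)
The plan is to reduce everything to the pointwise temporal-derivative bounds for $u_h$ extracted from \cref{eq:uh'-h1}, namely
$\nm{u_h'(t)}_{L^2(\Omega)} \lesssim t^{\alpha/2-1}\nm{P_hu_0}_{\dot H^1(\Omega)}$ and $\nm{u_h'(t)}_{\dot H^1(\Omega)} \lesssim t^{-1}\nm{P_hu_0}_{\dot H^1(\Omega)}$, and then carry out interval-by-interval estimates of $(I-P_\tau)u_h$ that are summed against the graded mesh $t_j=(j/J)^\sigma T$, $\tau_j\sim \sigma T J^{-\sigma} j^{\sigma-1}$.

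For \cref{eq:lxy-2}, on each interior interval $I_j=(t_{j-1},t_j)$ with $j\geqslant 2$, I would apply the one-dimensional Poincaré-Wirtinger inequality to the zero-mean function $(I-P_\tau)u_h|_{I_j}$ to obtain $\nm{(I-P_\tau)u_h}_{L^2(I_j;\dot H^1)}^2 \lesssim \tau_j^2\int_{I_j} s^{-2}\,ds\,\nm{P_hu_0}_{\dot H^1}^2 = \tau_j^3(t_{j-1}t_j)^{-1}\nm{P_hu_0}_{\dot H^1}^2$. On $I_1=(0,t_1)$ the pointwise bound $s^{-1}$ is not square-integrable; I would instead use the representation $(I-P_\tau)u_h(t)=\tau_1^{-1}\int_0^{t_1}(u_h(t)-u_h(s))\,ds$ with the integrable logarithmic gradient bound $\nm{u_h(t)-u_h(s)}_{\dot H^1}\lesssim \snm{\ln(t/s)}\nm{P_hu_0}_{\dot H^1}$, producing $\nm{(I-P_\tau)u_h}_{L^2(0,t_1;\dot H^1)}^2\lesssim t_1\nm{P_hu_0}_{\dot H^1}^2$. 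Summing over $j$ and inserting the graded-mesh asymptotics, the main contribution becomes $\sum_{j\geqslant 2} J^{-\sigma}j^{\sigma-3} + J^{-\sigma}$, whose three regimes $\sigma<2$, $\sigma=2$, $\sigma>2$ match the three cases in \cref{eq:eta2} exactly.

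For \cref{eq:lxy-1}, I would exploit that $\alpha/2<1/2$, so ${}_0H^{\alpha/2}(0,T;L^2(\Omega))$ coincides with $H^{\alpha/2}(0,T;L^2(\Omega))$ and admits jumps; its norm is equivalent to $\nm{\cdot}_{L^2}^2+\iint_{(0,T)^2}\snm{t-s}^{-1-\alpha}\nm{\cdot}_{L^2(\Omega)}^2\,dt\,ds$. The key technical ingredient is the interval-based interpolation estimate $\nm{(I-P_\tau)v}_{H^{\alpha/2}(0,T;X)}^2\lesssim \sum_j \tau_j^{2-\alpha}\nm{v'}_{L^2(I_j;X)}^2$, derived by splitting the Gagliardo double integral into diagonal blocks $I_j\times I_j$ (on which $P_\tau u_h$ is constant so the difference reduces to $u_h(t)-u_h(s)$) and off-diagonal blocks (where the zero-mean property of $(I-P_\tau)v$ on each $I_j$ gives the required cancellation). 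Combined with $\nm{u_h'(s)}_{L^2}^2\lesssim s^{\alpha-2}\nm{P_hu_0}_{\dot H^1}^2$ this yields a total $\sum_{j\geqslant 2}\tau_j^{3-\alpha}t_j^{\alpha-2}\sim \eta_1^2$, whose three regimes $\sigma<2-\alpha$, $\sigma=2-\alpha$, $\sigma>2-\alpha$ correspond term-for-term to \cref{eq:eta1}. For the first interval I would use the Hölder bound $\nm{u_h(t)-u_h(s)}_{L^2}\lesssim \snm{t^{\alpha/2}-s^{\alpha/2}}\nm{P_hu_0}_{\dot H^1}$ obtained by integrating $r^{\alpha/2-1}$, together with the elementary scaling computation $\iint_{(0,t_1)^2}\snm{t^{\alpha/2}-s^{\alpha/2}}^2\snm{t-s}^{-1-\alpha}\,dt\,ds\lesssim t_1$, so that the $I_1$ contribution is $\lesssim t_1=J^{-\sigma}T$, which is dominated by $\eta_1^2$ in every regime.

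The main obstacle is the first subinterval $I_1$: since $\nm{u_h'}_{L^2}\sim s^{\alpha/2-1}$ and $\nm{u_h'}_{\dot H^1}\sim s^{-1}$ have non-integrable $L^2(0,t_1)$ singularities, the standard Poincaré-type or broken-$H^{\alpha/2}$ interpolation arguments cannot be applied directly on $I_1$, and one must exploit the specific Hölder/logarithmic continuity of $u_h$ itself at $t=0$. A secondary technical point is verifying the off-diagonal Gagliardo cancellation in the interval-based interpolation estimate; conceptually this is the familiar $L^2$-projection orthogonality, but the weighted kernel $\snm{t-s}^{-1-\alpha}$ requires careful bookkeeping. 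Once both ingredients are in place, the remaining work is an elementary (if tedious) summation that trichotomizes naturally into the three regimes of $\sigma$ appearing in $\eta_1$ and $\eta_2$.
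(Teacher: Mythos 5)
Your proposal is correct and follows essentially the same route as the paper: identify $ {}_0H^{\alpha/2} $ with the Slobodeckij norm (valid since $ \alpha/2 < 1/2 $), localize the double integral into diagonal blocks plus interval-wise weighted terms, feed in the pointwise bounds $ \nm{u_h'(t)}_{L^2(\Omega)} \lesssim t^{\alpha/2-1} $ and $ \nm{u_h'(t)}_{\dot H^1(\Omega)} \lesssim t^{-1} $ from \cref{eq:uh'-h1}, treat the first subinterval separately via the H\"older bound $ \snm{t^{\alpha/2}-s^{\alpha/2}} $ (resp.\ the logarithmic bound in $ \dot H^1(\Omega) $), and sum over the graded mesh to obtain the trichotomies in $ \eta_1 $ and $ \eta_2 $. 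The only cosmetic difference is that the paper controls the off-diagonal Gagliardo contributions by the weighted endpoint terms $ \int_{I_j}\nm{g(t)}_{L^2(\Omega)}^2\big((t_j-t)^{-\alpha}+(t-t_{j-1})^{-\alpha}\big)\,\mathrm{d}t $ together with the local smallness of $ (I-P_\tau)u_h $, which is exactly what your ``zero-mean cancellation'' step amounts to once made precise.
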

\begin{proof}
  Let $ g := (I-P_\tau) u_h $. By \cite[Lemmas 12.4 and 16.3]{Tartar2007}, a
  simple calculation gives
  \begin{align*}
    \nm{g}_{{}_0H^{\alpha/2}(0,T;L^2(\Omega))}^2 \leqslant
    C_\alpha \nm{g}_{H^{\alpha/2}(0,T;L^2(\Omega))}^2
    \leqslant C_{\alpha,T} \big(
      \mathbb I_1 + \mathbb I_2 + \mathbb I_3 + \mathbb I_4
    \big), 
  \end{align*}
  where
  \begin{small}
  \begin{align*}
    \mathbb I_1 &:=\int_0^{t_1} \int_0^{t_1}
    \frac{\nm{g(s)-g(t)}_{L^2(\Omega)}^2}{\snm{s-t}^{1+\alpha}}
    \, \mathrm{d}s \, \mathrm{d}t, \\
    \mathbb I_2 &:= \sum_{j=2}^J \int_{t_{j-1}}^{t_j} \int_{t_{j-1}}^{t_j}
    \frac{\nm{g(s)-g(t)}_{L^2(\Omega)}^2}{\snm{s-t}^{1+\alpha}}
    \, \mathrm{d}s \, \mathrm{d}t, \\
    \mathbb I_3 &:= \int_0^{t_1} \nm{g(t)}_{L^2(\Omega)}^2
    \big(
      (t_1\!-\!t)^{-\alpha} \!+\! t^{-\alpha}
    \big) \, \mathrm{d}t, \\
    \mathbb I_4 &:= \sum_{j=2}^J
    \int_{t_{j-1}}^{t_j} \nm{g(t)}_{L^2(\Omega)}^2
    \big(
      (t_j\!-\!t)^{-\alpha} \!+\! (t\!-\!t_{j-1})^{-\alpha}
    \big) \, \mathrm{d}t.
  \end{align*}
  \end{small}
  By \cref{eq:uh'-h1}, a routine calculation gives that
  \begin{small}
  \begin{align*}
    \mathbb I_1 & =
    2\int_0^{t_1} \, \mathrm{d}t \int_t^{t_1}  \frac{
      \nm{g(s) - g(t)}_{L^2(\Omega)}^2
    }{\snm{s-t}^{1+\alpha}} \, \mathrm{d}s \\
    & \leqslant
    C_{\alpha,L,T,\Omega} \int_0^{t_1} \, \mathrm{d}t
    \int_t^{t_1} (s-t)^{-1-\alpha} \big( s^{\alpha/2} - t^{\alpha/2} \big)^2
    \, \mathrm{d}s \nm{P_hu_0}_{\dot H^1(\Omega)}^2 \\
    & =
    C_{\alpha,L,T,\Omega}
    \int_0^{t_1}  \, \mathrm{d}t
    \int_1^{t_1/t} (y - 1)^{-1-\alpha} (y^{\alpha/2}-1)^2 \, \mathrm{d}y
    \nm{P_hu_0}_{\dot H^1(\Omega)}^2 \\
    & \leqslant
    C_{\alpha,L,T,\Omega}
    \int_0^{t_1} 1 + \ln(t_1/t) \, \mathrm{d}t
    \nm{P_hu_0}_{\dot H^1(\Omega)}^2 \\
    & \leqslant
    C_{\alpha,L,T,\Omega}
    \tau_1 \nm{P_hu_0}_{\dot H^1(\Omega)}^2
  \end{align*}
  \end{small}
  and
  \begin{align*}
   \mathbb I_3 & \leqslant C_{\alpha,L,T,\Omega}
    \tau_1^\alpha \int_0^{t_1} \big(
      (t_1-t)^{-\alpha} + t^{-\alpha}
    \big) \, \mathrm{d}t \nm{P_hu_0}_{\dot H^1(\Omega)}^2 \\
                & \leqslant C_{\alpha,L,T,\Omega}
                \tau_1 \nm{P_hu_0}_{\dot H^1(\Omega)}^2.
  \end{align*}
  Also, by \cref{eq:uh'-h1}, a straightforward calculation yields that
  \begin{small}
  \begin{align*}
    \mathbb I_2
    & = 2 \sum_{j=2}^J
    \int_{t_{j-1}}^{t_j} \int_{t}^{t_j}
    \frac{\nm{g(s) - g(t)}_{L^2(\Omega)}^2}{
      \snm{s-t}^{1+\alpha}
    } \, \mathrm{d}s \, \mathrm{d}t \\
    & \leqslant
    C_{\alpha,L,T,\Omega} \sum_{j=2}^J (t_{j-1}^{\alpha-1} - t_j^{\alpha-1})
    \int_{t_{j-1}}^{t_j} \int_t^{t_j}
    (s-t)^{-\alpha} \, \mathrm{d}s \, \mathrm{d}t
    \nm{P_hu_0}_{\dot H^1(\Omega)}^2 \\
    & \leqslant  C_{\alpha,L,T,\Omega} \sum_{j=2}^J
    \tau_j^{2-\alpha}(t_{j-1}^{\alpha-1} - t_j^{\alpha-1})
    \nm{P_hu_0}_{\dot H^1(\Omega)}^2
  \end{align*}
  \end{small}
  and
  \begin{small}
  \begin{align*}
    \mathbb I_4
    & \leqslant
    C_{\alpha,L,T,\Omega} \sum_{j=2}^J (t_{j-1}^{\alpha-1} - t_j^{\alpha-1})
    \tau_j \int_{t_{j-1}}^{t_j}
    \big( (t_j-t)^{-\alpha} + (t-t_{j-1})^{-\alpha} \big) \, \mathrm{d}t
    \nm{P_hu_0}_{\dot H^1(\Omega)}^2 \\
    & \leqslant C_{\alpha,L,T,\Omega} \sum_{j=2}^J
    \tau_j^{2-\alpha}(t_{j-1}^{\alpha-1} - t_j^{\alpha-1})
    \nm{P_hu_0}_{\dot H^1(\Omega)}^2.
  \end{align*}
  \end{small}
  Since
  \begin{small}
    \begin{align*}
    & \sum_{j=2}^J \tau_j^{2-\alpha} \big(
      t_{j-1}^{\alpha-1} -
      t_j^{\alpha-1}
    \big) \\
      ={} &
      J^{-\sigma}
      \sum_{j=2}^J \big( j^\sigma - (j-1)^\sigma \big)^{2-\alpha}
      ((j-1)^{\sigma(\alpha-1)} - j^{\sigma(\alpha-1)}) \\
      <{} &
      \sigma^{3-\alpha} (1-\alpha)
      J^{-\sigma} \sum_{j=2}^J
      j^{(\sigma-1)(2-\alpha)} (j-1)^{\sigma(\alpha-1) - 1} \\
      \leqslant{} &
      \sigma^{3-\alpha} (1-\alpha)
      J^{-\sigma} 2^{1 + \sigma(1-\alpha)} \sum_{j=2}^J
      j^{(\sigma-1)(2-\alpha)+\sigma(\alpha-1) - 1} \\
      <{} & \sigma^{3-\alpha} (1-\alpha)
      2^{1 + \sigma(1-\alpha)} \eta_1(\alpha,\sigma,J)^2,
    \end{align*}
  \end{small}
  it follows that
  \begin{align*}
    \mathbb I_2 + \mathbb I_4 \leqslant C_{\alpha,L,T,\Omega}
    \eta_1(\alpha,\sigma,J)^2 \nm{P_hu_0}_{\dot H^1(\Omega)}^2.
  \end{align*}
  Finally, combining the above estimates of $ \mathbb I_1 $, $ \mathbb I_2 $, $
  \mathbb I_3 $ and $ \mathbb I_4 $ proves \cref{eq:lxy-1}. Since the proof of
  \cref{eq:lxy-2} is similar, it is omitted. This completes the proof.
\end{proof}


\begin{thebibliography}{10}

	\bibitem{Cuesta2006}
	E.~Cuesta, C.~Lubich, and C.~Palencia.
	\newblock Convolution quadrature time discretization of fractional
	diffusion-wave equations.
	\newblock {\em Math. Comput.}, 75(254):673--696, 2006.

	\bibitem{Ervin2006}
	V.~Ervin and J.~Roop.
	\newblock Variational formulation for the stationary fractional advection
	dispersion equation.
	\newblock {\em Numer. Meth. Part. D. E.}, 22(3):558--576, 2006.

	\bibitem{Evans2010}
	L.~C. Evans.
	\newblock {\em Partial differential equations}.
	\newblock American Mathematical Society, 2 edition, 2010.

	\bibitem{Jin2015IMA}
	B.~Jin, R.~Lazarov, J.~Pasciak, and Z.~Zhou.
	\newblock Error analysis of semidiscrete finite element methods for
	inhomogeneous time-fractional diffusion.
	\newblock {\em IMA. J. Numer. Anal.}, 35(2):561--582, 2015.

	\bibitem{Jin2013SIAM}
	B.~Jin, R.~Lazarov, and Z.~Zhou.
	\newblock Error estimates for a semidiscrete finite element method for
	fractional order parabolic equations.
	\newblock {\em SIAM J.  Numer. Anal.}, 51(1):445--466, 2013.

	\bibitem{Jin2016}
	B.~Jin, R.~Lazarov, and Z.~Zhou.
	\newblock Two fully discrete schemes for fractional diffusion and
	diffusion-wave equations with nonsmooth data.
	\newblock {\em SIAM J. Sci Comput.}, 38(1):A146--A170, 2016.

	\bibitem{Jin2018}
	B.~Jin, B.~Li, and Z.~Zhou.
	\newblock Numerical analysis of nonlinear subdiffusion equations.
	\newblock {\em SIAM J. Numer. Anal.}, 56(1):1--23, 2018.

	\bibitem{Li2018-wave}
	B.~Li, H.~Luo, and X.~Xie.
	\newblock A time-spectral algorithm for fractional wave problems.
	\newblock {\em J. Sci. Comput.}, 77(2):1164--1184, 2018.

	\bibitem{Li2019SIAM}
	B.~Li, H.~Luo, and X.~Xie.
	\newblock Analysis of a time-stepping scheme for time fractional diffusion
	problems with nonsmooth data.
	\newblock {\em SIAM J. Numer. Anal.}, 57(2):779--798, 2019.

	\bibitem{Li-Wang-Xie2019wave}
	B.~Li, T.~Wang, and X.~Xie.
	\newblock Analysis of the L1 scheme for fractional wave equations with nonsmooth
	data.
	\newblock {\em submitted}, axXiv:1908.09145.

	\bibitem{Li-Wu-Zhang2019}
	D.~Li, C.~Wu, and Z.~Zhang.
	\newblock Linearized galerkin fems for nonlinear time fractional parabolic
	problems with non-smooth solutions in time direction.
	\newblock {\em J. Sci. Comput.}, 80(1):403--419, 2019.

	\bibitem{Li2009}
	X.~Li and C.~Xu.
	\newblock A space-time spectral method for the time fractional diffusion
	equation.
	\newblock {\em SIAM J. Numer. Anal.}, 47(3):2108--2131, 2009.
\bibitem{Liao2019}
H.~Liao, W.~McLean and J.~Zhang.
	\newblock A discrete Gr\"{o}nwall inequality with application to numerical schemes for subdiffusion problems
		\newblock {\em SIAM J. Numer. Anal.}, 57(1):218--237, 2019.
	\bibitem{Lin2007}
	Y.~Lin and C.~Xu.
	\newblock Finite difference/spectral approximations for the time-fractional
	diffusion equation.
	\newblock {\em J. Comput. Phys.}, 225(2):1533--1552, 2007.

	\bibitem{Lubich1986}
	C.~Lubich.
	\newblock Discretized fractional calculus.
	\newblock {\em SIAM J. Math. Anal.}, 17(3):704--719, 1986.

	\bibitem{Lubich1988}
	C.~Lubich.
	\newblock Convolution quadrature and discretized operational calculus.
	\newblock {\em Numer. Math.}, 52(4):129--145, 1988.

	\bibitem{Lubich1996}
	C.~Lubich, I.~Sloan, and V.~Thom\'ee.
	\newblock Nonsmooth data error estimates for approximations of an evolution
	equation with a positive-type memory term.
	\newblock {\em Math. Comput.}, 65(213):1--17, 1996.

	\bibitem{Luo2019}
	H.~Luo, B.~Li., and X.~Xie.
	\newblock Convergence analysis of a petrov--galerkin method for fractional wave
	problems with nonsmooth data.
	\newblock {\em J. Sci. Comput.}, 80(2):957--992, 2019.

	\bibitem{Mclean2009Convergence}
	W.~McLean and K.~Mustapha.
	\newblock Convergence analysis of a discontinuous galerkin method for a
	sub-diffusion equation.
	\newblock {\em Numer. Algor.}, 52(1):69--88, 2009.

	\bibitem{Mclean2015Time}
	W.~McLean and K.~Mustapha.
	\newblock Time-stepping error bounds for fractional diffusion problems with
	non-smooth initial data.
	\newblock {\em J. Comput. Phys.}, 293:201--217, 2015.

	\bibitem{Mustapha2009Discontinuous}
	K.~Mustapha and W.~McLean.
	\newblock Discontinuous Galerkin method for an evolution equation with a memory
	term of positive type.
	\newblock {\em Math.  Comput.}, 78(268):1975--1995, 2009.

	\bibitem{Mustapha2012Uniform}
	K.~Mustapha and W.~McLean.
	\newblock Uniform convergence for a discontinuous Galerkin, time-stepping
	method applied to a fractional diffusion equation.
	\newblock {\em IMA J.  Numer. Anal.}, 32(3):906--925(20), 2012.

	\bibitem{Mustapha2010IMA}
	K.~Mustapha and H.~Mustapha.
	\newblock A second-order accurate numerical method for a semilinear
	integro-differential equations with a weakly singular kernel.
	\newblock {\em IMA J.  Numer. Anal.}, 30:555--578, 2010.

	\bibitem{Podlubny1998}
	I.~Podlubny.
	\newblock {\em Fractional differential equations}.
	\newblock Academic Press, 1998.

	\bibitem{sun2006fully}
	Z.~Sun and X.~Wu.
	\newblock A fully discrete difference scheme for a diffusion-wave system.
	\newblock {\em Appl. Numer. Math.}, 56(2):193--209, 2006.

	\bibitem{Tartar2007}
	L.~Tartar.
	\newblock {\em An Introduction to Sobolev Spaces and Interpolation Spaces}.
	\newblock Springer, Berlin, 2007.

	\bibitem{Wang-Chen-Xiao2019}
	R.~Wang, D.~Chen, and T.~Xiao.
	\newblock Abstract fractional cauchy problems with almost sectorial operators.
	\newblock {\em J. Differ. Equations}, 252(1):202--235, 2012.

	\bibitem{Yagi2010}
  A.~Yagi.
	\newblock Abstract parabolic evolution equations and their applications.
	\newblock Springer, Berlin, 2010.

	\bibitem{Zayernouri2014Fractional}
	M.~Zayernouri and G.~E.~ Karniadakis.
	\newblock Fractional spectral collocation method.
	\newblock {\em SIAM J. Sci. Comput.}, 36(1):A40--A62, 2014.

	\bibitem{Zayernouri2014Exponentially}
	M.~Zayernouri and G.~E. Karniadakis.
	\newblock Exponentially accurate spectral and spectral element methods for
	fractional odes.
	\newblock {\em J. Comput. Phys.}, 257(2):460--480, 2014.

	\bibitem{Zayernouri2012Karniadakis}
	M.~Zayernouri and G.~E.~Karniadakis.
	\newblock  Discontinuous spectral element methods for time- and space-fractional advection equations.
	\newblock {\em SIAM J. Sci. Comput.}, 36(4):B684--B707, 2014.

	\bibitem{Zheng2015}
	M.~Zheng, F.~Liu, I.~Turner, and V.~Anh.
	\newblock A novel high order space-time method for the time fractional
	fokker-planck equation.
	\newblock {\em SIAM J. Sci. Comput.}, 37(2):A701--A724, 2015.
\end{thebibliography}
\end{document}